\documentclass[preprint,3p,times,sort&compress]{elsarticle}

\usepackage{amssymb}
\usepackage{amsmath}
\usepackage{amsthm}
\usepackage{lineno}

\newcommand\dd{\,\mathrm{d}}

\newcommand\e{\mathrm{e}}

\newcommand\bbR{\mathbb{R}}
\newcommand\bbN{\mathbb{N}}

\newcommand\He{\mathit{He}}
\newcommand\Kn{\mathit{Kn}}

\newcommand\mA{\mathcal{A}}
\newcommand\mM{\mathcal{M}}
\newcommand\mF{\mathcal{F}}

\newcommand\mH{\mathcal{H}}

\newcommand\mR{\mathcal{R}}

\newcommand\mQ{\mathcal{Q}}

\newcommand\mK{\mathcal{K}}
\newcommand\mI{\mathcal{I}}

\newcommand\bI{{\bf{I}}}

\newcommand\bs{\boldsymbol{s}}
\newcommand{\bbf}{\boldsymbol{f}}

\newcommand\bJ{\boldsymbol{J}}
\newcommand\bR{\boldsymbol{R}}

\newcommand\bxi{\boldsymbol{\xi}}
\newcommand\bx{\boldsymbol{x}}

\newcommand\bq{\boldsymbol{q}}

\newcommand\bu{\boldsymbol{u}}
\newcommand\bU{\boldsymbol{U}}
\newcommand\bv{\boldsymbol{v}}
\newcommand\be{\boldsymbol{e}}

\newcommand\bn{\boldsymbol{n}}

\newcommand\bvpi{\boldsymbol{\varpi}}
\newcommand\bvrho{\boldsymbol{\varrho}}
\newcommand\bsigma{\boldsymbol{\sigma}}

\newcommand\pd[2]{\dfrac{\partial {#1}}{\partial {#2}}}

\newcommand\bra[1]{\left( {#1} \right)}
\newcommand\abs[1]{| {#1} |}

\newcommand\norm[1]{\Vert {#1} \Vert}

\usepackage{mathtools}
\usepackage{latexsym}
\usepackage{color}
\usepackage[colorlinks=true, linkcolor=blue, citecolor=blue, urlcolor=blue]{hyperref}
\usepackage{cleveref}
\usepackage{lipsum}
\usepackage{amsfonts}
\usepackage{graphicx}
\usepackage{epstopdf}
\usepackage{algorithm}
\usepackage{algorithmic}
\usepackage{tikz}
\usepackage{booktabs}
\usepackage{siunitx}
\usepackage{subfig}
\usepackage{multirow}
\usepackage{diagbox}

\ifpdf
  \DeclareGraphicsExtensions{.eps,.pdf,.png,.jpg}
\else
  \DeclareGraphicsExtensions{.eps}
\fi

\usetikzlibrary{calc} 
\tikzstyle{start} = [rectangle, draw, fill=pink!20, 
    text width=8em, text centered, rounded corners, minimum height=5em]
\tikzstyle{Euler} = [rectangle, draw, fill=blue!20, 
    text width=8em, text centered, rounded corners, minimum height=5em]
\tikzstyle{Moment} = [rectangle, draw, fill=yellow!20, 
    text width=8em, text centered, rounded corners, minimum height=5em]
\tikzstyle{line} = [draw, -stealth, line width=2pt]

\graphicspath{{figs/}}

\theoremstyle{remark}
\newtheorem{remark}{Remark}
\theoremstyle{theorem}

\newtheorem{proposition}{Proposition}
\crefname{figure}{Figure}{Figure}
\crefname{table}{Table}{Table}
\crefname{equation}{}{}
\crefname{appendix}{}{}

\begin{document}

\begin{frontmatter}



\title{An improved fast iterative moment method for the simulation of
	near-continuum gas flows with Shakhov model \tnoteref{tnote1}}%
\tnotetext[tnote1]{
	The work of Zhenning Cai was supported by the Academic Research Fund of the
	Ministry of Education of Singapore under grant A-8002392-00-00. 
	The work of others was partially supported by the National Natural
    Science Foundation of China, No. 12171240 and the National Science
    and Technology Major Project, No. J2019-II-0007-0027.
    The
    computational resources were supported by High Performance
    Computing Platform of Nanjing University of Aeronautics and
    Astronautics, China.}

\author[1]{Zhenning Cai} 
\ead{matcz@nus.edu.sg}
\author[2,3]{Zhicheng Hu\corref{cor3}}
\cortext[cor3]{Corresponding author: Zhicheng Hu}
\ead{huzhicheng@nuaa.edu.cn}
\author[2,3]{Guanghan Li} 
\ead{liguanghan@nuaa.edu.cn}
\author[2,3]{Chunwu Wang}
\ead{wangcw@nuaa.edu.cn}
\author[2,3]{Xiaowen Wang}
\ead{wangxw@nuaa.edu.cn}

\affiliation[1]{organization={Department of Mathematics, National University of
		Singapore},
	addressline={Level 4, Block S17, 10 Lower Kent Ridge Road}, 
	postcode={119076}, 
	country={Singapore}}
\affiliation[2]{organization={School of Mathematics, Nanjing University of
Aeronautics and Astronautics},
            city={Nanjing},
            postcode={211106}, 
            country={China}}
\affiliation[3]{organization={Key Laboratory of
Mathematical Modelling and High Performance Computing of Air Vehicles (NUAA)},
            city={Nanjing},
            postcode={211106}, 
            country={China}}

\begin{abstract}
  The fast iterative moment (FIM) method developed in [G. Li, C. Wang,
  and Z. Hu, \textit{Comput. Fluids}, 311 (2026), 107031] achieves
  fast convergence for near-continuum gas flows modeled by the
  Boltzmann-BGK equation through alternating iterations between the
  high-order moment system and the consistent hydrodynamic
  equations. In this paper, we extend the FIM solver to the Shakhov
  model and improve it in the following aspects. A
  finite volume method with linear reconstruction is employed for
  high-resolution spatial discretization of both
  systems. Following the original FIM method, an
  implicit-explicit scheme treating the Shakhov collision term
  implicitly is adopted for the high-order moment
  system. A local Fourier analysis of a linearized
    one-dimensional BGK model reveals the need for more hydrodynamic
    iterations at smaller Knudsen numbers and their slower convergence
    on refined grids. Motivated by these findings, a Newton-multigrid
    method with a new regularization strategy is introduced for the
    hydrodynamic equations, generally providing an effective
    hydrodynamic correction with one step per alternating iteration,
    thereby reducing the need for empirical tuning.  Numerical
  experiments for planar Couette flow, lid-driven cavity flow, and
  heat transfer in a bottom-heated cavity are carried out to
  investigate the performance of the improved FIM solver. The results
  demonstrate improved efficiency and robustness over the
    original FIM method, particularly at small Knudsen numbers, and
    favorable performance with both first- and second-order spatial
  discretizations.
\end{abstract}



\begin{keyword}
	Boltzmann equation \sep  Shakhov model \sep fast iterative moment method  \sep
	linear reconstruction \sep Newton-multigrid method

\MSC 76P05 \sep 65B99 \sep 65M55 
\end{keyword}

\end{frontmatter}


\section{Introduction}
\label{sec:intro}

In rarefied gas flows,
the degree of gas rarefaction is characterized by the Knudsen number
$\Kn$, which is defined as the ratio between the mean free path $\lambda$ of
gas molecules and the characteristic flow length $L$. 
The Navier-Stokes-Fourier (NSF) equations based on the continuum fluid
hypothesis are usually applicable up to $\Kn \sim 0.001$ \cite{Wu2014}.
When $\Kn$ is larger, the continuum hypothesis breaks down and the NSF
equations fail to capture the non-conventional behaviour of the rarefied flow. 
To model rarefied gas dynamics accurately, 
mesoscopic kinetic models are
generally adopted, among which the Boltzmann equation is the most fundamental one.
However, despite the rapid development of modern high-performance computing,
the numerical simulation of this equation remains computationally
challenging due to its high dimensionality and nonlinear integro-differential
structure.
To balance efficiency and accuracy, a popular strategy in the near-continuum
flow regime ($0.001 < \Kn < 0.1$) is to replace the
complex collision integral with simplified relaxation terms, such as the
Bhatnagar-Gross-Krook (BGK) model \cite{Bhatnagar1954}, the ellipsoidal
statistical BGK (ES-BGK) model \cite{HolwayJr1966}, and the Shakhov model \cite{Shakhov1972}.
Nevertheless, even with these simplified collision models, numerical
simulation is still computationally demanding.
In the past few decades, 
considerable effort has been devoted to developing
efficient and accurate numerical methods for the Boltzmann equation and its
simplified collision models; see, e.g., \cite{Mieussens2000,Xu2010,Cai2010,Su2020,Yang2022,Boscarino2022}. 

The moment method provides a hierarchy of macroscopic transport models capable of describing rarefaction effects beyond the NSF equations \cite{Struchtrup2005}.
The method, originally proposed by Grad \cite{Grad1949}, approximates the distribution function by a Hermite expansion in velocity space with expansion parameters determined by the local physical quantities. When an appropriate closure ansatz is adopted \cite{Cai2014,Cai2015framework}, the global hyperbolicity of moment systems of arbitrary order can be guaranteed.
As the moment order increases, these systems often provide increasingly accurate approximations to the underlying Boltzmann equation \cite{Torrilhon2017,Hu2016}. However, the rapidly increasing number of moment equations makes the direct formulation and implementation of high-order systems cumbersome. 
To address this issue, a unified numerical framework for high-order moment systems is developed in \cite{Cai2010,Cai2012,Cai2013microflow}, where the full discretization can be formally obtained without explicitly writing out each equation. 
While this framework facilitates the practical implementation of high-order moment systems, efficiently computing their steady-state solutions remains challenging.

To accelerate the convergence of steady-state solutions of hyperbolic moment systems, we have previously developed two iterative solvers under the nonlinear multigrid (NMG) framework \cite{Brandt2011}: an NMG iteration using spatial coarse-grid corrections \cite{Hu2014nmg,Hu2023} and a nonlinear
multilevel moment (NMLM) iteration using lower-order model corrections \cite{Hu2016,Hu2019}.
Both methods have shown significant improvement in convergence in comparison to the direct time-integration methods.
However, the convergence of both methods still deteriorates significantly as $\Kn$ decreases, leading to excessively high computational cost in the near-continuum flow regime. 
Moreover, integrating the two types of
coarse-level corrections into a more efficient and robust multilevel solver remains nontrivial.

Motivated by the slow convergence of these methods in the near-continuum flow regime, we have developed a fast iterative moment (FIM) method in \cite{Li2026} by alternately solving the high-order moment system and the hydrodynamic equations with consistent constitutive relations and boundary conditions. Since the hydrodynamic equations constitute a subsystem of the moment system for $M\ge 2$, they can efficiently predict the evolution of the primary macroscopic quantities at a much lower computational cost and thereby accelerate the overall iteration. This alternating framework can be further incorporated into the spatial NMG method, leading to an FIM-based multigrid solver that combines coarse-level corrections in the velocity and spatial spaces.
From a broader perspective, 
the FIM method shares some conceptual similarities with several recently developed micro-macro acceleration methods, including the general synthetic iterative scheme (GSIS) \cite{Zhang2025}, the unified gas-kinetic scheme (UGKS) \cite{Long2024}, and the improved discrete velocity method (IDVM) \cite{Yang2018,Yang2022}, although their formulations and implementations are fundamentally different.

Nevertheless, there is still room for improvement in the accuracy and efficiency of the FIM method.
The original implementation is restricted to the Boltzmann-BGK equation and employs only first-order spatial discretization, which introduces considerable numerical dissipation and limits the spatial resolution. Moreover, the time-integration-based hydrodynamic solver in each alternating iteration usually requires a large number of iterations to sufficiently evolve the macroscopic quantities when $\Kn$ is small.
Its convergence may also slow down as the grid is refined, leading to increased computational cost for high-resolution simulations.
Motivated by these limitations, in this work we extend the FIM solver to the Shakhov model, with the following main contributions.
\begin{itemize}
\item
	The spatial discretization is extended to second-order accuracy by employing a finite volume method with linear reconstruction for both the moment system and the consistent hydrodynamic equations.
\item 
  An implicit-explicit (IMEX) scheme is adopted for the moment system, in which the Shakhov collision term is treated implicitly to improve numerical stability at small Knudsen numbers, as in the original FIM method for the BGK model.
\item
	A local Fourier analysis of a linearized one-dimensional BGK model is performed to explain the need for more hydrodynamic iterations at smaller Knudsen numbers and their slower convergence on refined grids.
\item 
  A Newton-multigrid method equipped with a new regularization strategy is introduced for the hydrodynamic equations, replacing the time-integration-based methods used in \cite{Li2026}.
\end{itemize}
In general, one Newton-multigrid step per alternating iteration is sufficient to provide an effective hydrodynamic correction, substantially reducing the need for parameter tuning required in \cite{Li2026}. The resulting framework also provides an alternative way to integrate coarse-level corrections in the velocity and spatial spaces through the hydrodynamic equations and the Newton-multigrid method, respectively. With these improvements, the FIM solver achieves enhanced efficiency and robustness, particularly at small Knudsen numbers.

The rest of this paper is arranged as follows.
In \cref{sec:model}, we introduce the model equations, their spatial discretization with linear reconstruction, and the FIM framework with the time-integration-based hydrodynamic solver.
The local Fourier analysis is presented in \cref{sec:fim3-convergence-analysis}, and in \cref{sec:numerical-scheme} both the IMEX scheme for the moment system with the Shakhov model and the Newton-multigrid method for the hydrodynamic equations are presented.
Numerical experiments are carried out in \cref{sec:experiments} to demonstrate the performance of the proposed method. Finally, some concluding remarks are given in the last section.

\section{Governing equations and FIM framework}
\label{sec:model}
In this section, we first give a brief review of the steady-state Boltzmann-Shakhov equation and the high-order moment models, then present a unified spatial discretization with linear reconstruction for the given models, and finally introduce the FIM framework for solving the resulting discrete system.

\subsection{Boltzmann equation with Shakhov collision term}
\label{sec:boltzmann-eq}

The Boltzmann equation with a BGK-type collision term provides a fundamental description of gas flows in the near-continuum regime. For the steady-state problems considered in this work, it takes the form
\begin{align}
  \label{eq:boltzmann}
	 \bv \cdot \nabla_{\bx} f  = \bar{\nu} (f^{\text{E}} - f) .
\end{align}
Here, $f(\bx, \bv)$ is the distribution function, in which $\bx =
(x_1, \dots, x_D) \in \Omega \subset \bbR^D$ ($D = 1, 2$ or $3$) and $\bv =
(v_1, v_2, v_3) \in \bbR^3$ represent the spatial location and velocity of the
gas molecules, respectively.
The left-hand side of \cref{eq:boltzmann} describes the change of $f$ due to the
free streaming, while the right-hand side models the interactions between gas
molecules via a simplified relaxation process.
The average collision frequency $\bar{\nu}$ determines how fast the distribution
function approaches equilibrium, assumed to be independent of $\bv$ and inversely proportional to the Knudsen number, i.e., $\bar{\nu} \propto 1/\Kn$. In the BGK model, the function $f^{\text{E}}$ equals the local Maxwellian  $f^{\mM}$ defined by
\begin{align}
	 \label{eq:maxwellian}
	 f^{\mM}(\bx,\bv) = \frac{\rho(\bx)}{m_* (2 \pi \theta(\bx))^{3/2}}
	 \exp \left( - \frac{\abs{\bv - \bu(\bx)}^2}{2 \theta(\bx)} \right),
\end{align}
where $m_{*}$ is the mass of a single gas molecule. 
The macroscopic quantities in \cref{eq:maxwellian}, including the mass density
$\rho(\bx)$, the mean velocity $\bu(\bx) = (u_1, u_2, u_3)$ and the temperature
$\theta(\bx)$, can be computed from the distribution function $f$ as
\begin{align}
  \label{eq:rho-u-theta}
	 \begin{aligned}
	 \rho(\bx) = m_* \int_{\bbR^3} f(\bx,\bv) \dd \bv, \quad 
	 \rho(\bx) \bu(\bx) = m_* \int_{\bbR^3} \bv f(\bx,\bv) \dd \bv, \quad
	 \rho(\bx) \vert \bu(\bx) \vert^2 + 3 \rho(\bx) \theta(\bx) = m_*
	 \int_{\bbR^3} \vert \bv \vert^2 f(\bx, \bv) \dd \bv. 
	 \end{aligned}
\end{align}

It is well known that the BGK model does not produce the correct Prandtl number.
A commonly used fix is the Shakhov model \cite{Shakhov1972},  
in which the equilibrium distribution $f^{\text{E}}$ is given by
\begin{align}
	\label{eq:Shakhov-model}
	f^{\text{E}}(\bx,\bv) = \left[ 1 + \frac{\bra{1-\Pr} \bra{\bv-\bu(\bx)}
	\cdot \bq(\bx)}{5 \rho(\bx) \left[ \theta(\bx) \right]^2} \bra{\frac{|\bv -
	\bu(\bx)|^2}{\theta(\bx)} -5} \right] f^{\mM}(\bx,\bv),
\end{align}
where the Prandtl number can be specified by the parameter $\Pr$, and
$\bq$ denotes the heat flux of the gas to be defined later. When ${\Pr} =
1$, the Shakhov model reduces to the BGK model. The nonequilibrium moments, including the stress tensor $\sigma_{ij}$, $i,j=1,2,3$, and the heat flux $\bq$, can be computed from the distribution function as
\begin{align}
  \label{eq:sigma-q}
	 \begin{aligned}
	 & \sigma_{ij}(\bx) = m_* \int_{\bbR^3} (v_i - u_i(\bx))(v_j -
	 u_j(\bx)) f(\bx,\bv) \dd \bv - \rho(\bx) \theta(\bx) \delta_{ij},
         \qquad i,j = 1,2,3,
	 \\ & \bq(\bx) = \frac{m_*}{2} \int_{\bbR^3} \vert
	 \bv-\bu(\bx) \vert^2 (\bv-\bu(\bx)) f(\bx,\bv) \dd \bv.
	 \end{aligned}
\end{align}
Here $\delta_{ij}$ is Kronecker's delta symbol. Note that $\sigma_{ij}$ is a
trace-free tensor satisfying $\sigma_{11} + \sigma_{22} + \sigma_{33} = 0$.

\subsection{High-order hyperbolic moment system and hydrodynamic equations}
\label{sec:vel-dis}

To obtain moment models for the Boltzmann equation \cref{eq:boltzmann}, the
distribution function is first expanded into a series of Hermite functions as
\begin{align}
  \label{eq:dis-expansion}
  f(\bx, \bv) = \sum_{\alpha \in \bbN^{3}} f_\alpha(\bx)
  \mH_{\alpha}^{[\bvpi, \vartheta]} (\bv), 
\end{align}
where $f_\alpha(\bx)$ are the coefficients, and $\mH_{\alpha}^{[\bvpi,
\vartheta]}(\cdot)$ are the basis functions defined by
\begin{align*}
  \mH_{\alpha}^{[\bvpi, \vartheta]}(\bv) =
  \frac{1}{m_*(2\pi\vartheta)^{3/2}\vartheta^{|\alpha|/2}} \prod\limits_{d=1}^3\He_{\alpha_d}({\xi}_d) \exp \left(-\frac{{\xi}_d^2}{2} \right), \quad {\bxi} =
  \frac{\bv-\bvpi}{ \sqrt{\vartheta}},~~\forall \bv \in \bbR^3.
\end{align*}
In the above expression, $\alpha=(\alpha_1,\alpha_2,\alpha_3)$ is a multi-index with order $|\alpha|=\alpha_1+\alpha_2+\alpha_3$, 
and $\He_n(\cdot)$ is the Hermite polynomial of degree $n$ given by
\begin{align*}
  \He_n(x) = (-1)^n\exp \left(\frac{x^{2}}{2} \right) \frac{\dd^n}{\dd x^n} 
  \exp \left(-\frac{x^{2}}{2}\right).
\end{align*}
The parameters $\bvpi \in \bbR^3$ and $\vartheta \in \bbR^{+}$ in the basis
function could be either constants as in the Hermite spectral method
\cite{Hu2020}, or variables depending on $\bx$. 
In Grad's moment method
\cite{Grad1949}, $\bvpi$ and $\vartheta$ are chosen adaptively as the local
mean velocity $\bu$ and the local temperature $\theta$, which are determined
from $f$ itself via \cref{eq:rho-u-theta}. With this choice, the relationship
between the moment coefficients $f_{\alpha}$ and the macroscopic variables can
be deduced from \cref{eq:rho-u-theta} and \cref{eq:sigma-q} as
\begin{align}
  \label{eq:moments-relation}
  \begin{aligned}
    &f_0 = \rho, \qquad f_{e_1} = f_{e_2} = f_{e_3} = 0, \qquad
    \sum_{d=1}^3 f_{2e_d} = 0,\\
    &\sigma_{ij} = (1+\delta_{ij}) f_{e_i+e_j}, \quad q_i = 2
    f_{3e_i} + \sum_{d=1}^3 f_{2e_d+e_i}, \quad i,j=1,2,3,
  \end{aligned}
\end{align}
where $e_1$, $e_2$, $e_3$ represent the multi-indices $(1,0,0)$, $(0,1,0)$ and
$(0,0,1)$, respectively.
Since the first term of series \cref{eq:dis-expansion} already recovers the local Maxwellian, Grad's method often provides an efficient approximation of the distribution function for flows close to thermodynamic equilibrium.
Accordingly, we adopt Grad's choice $\bvpi=\bu$ and $\vartheta=\theta$ for the representation of the distribution function in the near-continuum flow regime considered in this paper.

Numerically, the distribution function is approximated by a truncated series of
order $M$, that is,
\begin{align}
  \label{eq:truncated-expansion}
  f(\bx, \bv) \approx \sum_{|\alpha| \leq M} f_\alpha(\bx)
  \mH_{\alpha}^{[\bu, \theta]} (\bv) \in \mF_{M}^{[\bu,\theta]}, 
\end{align}
in which $\mF_{M}^{[\bu,\theta]}$ is the
linear space spanned by the basis functions $\mH_{\alpha}^{[\bu,\theta]}(\bv)$ with $|\alpha| \leq M$. 
Substituting the expansion \cref{eq:truncated-expansion} into the Boltzmann equation \cref{eq:boltzmann} and following the procedures in \cite{Cai2012,Cai2014,Cai2015}, we obtain the globally hyperbolic moment system of order $M$. With $\bvrho$ denoting the vector of all independent variables $\bu$, $\theta$, and $f_{\alpha}$ with $\abs{\alpha}\leq M$,
the resulting system can be written in the quasi-linear form
\begin{align}
  \label{eq:moment-system}
	 \sum_{d=1}^{D} {\bf{A}}_{d}(\bvrho) \pd{\bvrho}{x_{d}} = \mQ(\bvrho),
\end{align}
where the left-hand side is derived from the convection part of the Boltzmann equation, and the right-hand side corresponds to the collision term.
The detailed expression of the above system can be found for the
Boltzmann-Shakhov equation in \ref{sec:expression_HME}.
For $M\ge2$, the factor $1/\Kn$ enters the right-hand side of \cref{eq:moment-system} through the collision frequency $\bar{\nu}$, making the system increasingly stiff as $\Kn\to0$.

Moreover, the conservation laws of mass, momentum and energy form a natural subsystem of the moment system \cref{eq:moment-system} for $M\ge2$. The corresponding hydrodynamic equations for the
primary macroscopic quantities $\rho$, $\bu$ and $\theta$ take the form
\begin{align}
	\label{eq:HEs}
	\left \{
	\begin{aligned}
		& \nabla_{\bx} \cdot \bra{\rho \bu} = 0, \\
		& \nabla_{\bx} \cdot \bra{\rho \bu \otimes \bu
		+ p \bI + \bsigma} = 0, \\
		& \nabla_{\bx} \cdot \bra{\rho E \bu + p \bu + \bu
		\cdot \bsigma + \bq} = 0,
	\end{aligned} \right.
\end{align}
where $p = \rho \theta$ is the pressure, $\bI$ is the identity matrix, and $E = 3 \theta / 2  +\vert \bu \vert^2 /2$ is the specific total energy.

\subsection{Spatial discretization of the moment system and linear reconstruction}
\label{sec:spa-dis}
The hyperbolic moment system \cref{eq:moment-system} of order $M$ is discretized following the unified finite volume method developed in \cite{Cai2010,Cai2012,Cai2012high,Cai2013microflow}.
For convenience, only the one-dimensional formulation is presented here, while the extension to two-dimensional problems on rectangular grids is straightforward.

Suppose the spatial domain $\Omega = (x_L, x_R)$ is partitioned into $N_{x}$ uniform grid cells of size $\Delta x$, where the $i$th cell is denoted by $\mK_i = [x_{i-1/2},x_{i+1/2}]$. 
In each cell $\mK_i$, we use $f_i(\bv)$ and
$f_{i}^{\pm}(\bv)$ to denote the cell-averaged distribution function and 
the reconstructed interface distribution functions at $x_{i\pm1/2}$, respectively.
Then the finite volume discretization of the Boltzmann equation \cref{eq:boltzmann} is given by
\begin{align}
	 \label{eq:x-fv-dis}
	 - \frac{1}{\Delta x} \bra{ \bar{F}_{i+1/2} \bra{f_{i}^{+}, f_{i+1}^{-}} -
	 \bar{F}_{i-1/2} \bra{f_{i-1}^{+}, f_{i}^{-}} } + \mQ(f_{i}) = 0.
\end{align}
Here, $\bar{F}_{i \pm 1/2} \bra{\cdot, \cdot}$
are the numerical fluxes at the boundaries of
the $i$th grid cell, 
and $\mQ(f_i)$ denotes the collision term evaluated using the cell-averaged distribution function $f_i$.
Let us assume that $f_{i}(\bv)$ belongs to the function space $\mF_{M}^{[\bu_{i},
\theta_{i}]}$, i.e.,
\begin{align}
  \label{eq:truncated-dis-i}
	 f_{i} (\bv)  = \sum_{|\alpha| \leq M} f_{i,\alpha} \mH_{\alpha}^{[\bu_{i},
	 \theta_{i}]} (\bv) ,
\end{align}
where $\bu_{i}$ and $\theta_{i}$ are the local mean velocity and temperature, respectively, so that the relations \cref{eq:moments-relation}
hold for the coefficients $f_{i,\alpha}$. 
Similarly, the numerical fluxes $\bar{F}_{i \pm 1/2} \bra{\cdot, \cdot}$ and the collision term $\mQ(f_{i})$ in \eqref{eq:x-fv-dis} should be expressed as functions in the same space $\mF_{M}^{[\bu_{i}, \theta_{i}]}$ as follows
\begin{align}
\label{eq:dis-expansion-F-Q}
	\begin{aligned}
		 \bar{F}_{i \pm 1/2} \bra{\cdot, \cdot} = \sum_{|\alpha| \leq M} \bar{F}
		_{i \pm 1/2,\alpha} \bra{\cdot, \cdot} \mH_{\alpha} ^{[\bu_{i},
		\theta_{i}]} (\bv), \quad
		 \mQ(f_{i})  = \sum_{|\alpha| \leq M} \mQ_{i,\alpha} \mH_{\alpha}
		^{[\bu_{i}, \theta_{i}]} (\bv),
	\end{aligned}
\end{align}
where $\bar{F}_{i \pm 1/2,\alpha}\bra{\cdot, \cdot}$ and $\mQ_{i,\alpha}$ are
the coefficients. Substituting the expansions \cref{eq:truncated-dis-i} and
\cref{eq:dis-expansion-F-Q} into the system \cref{eq:x-fv-dis} and matching
the coefficients of the same basis function $\mH_{\alpha}^{[\bu_{i},
\theta_{i}]} (\bv)$, 
we obtain the finite volume discretization for the hyperbolic moment system
\cref{eq:moment-system}.

In the present numerical experiments, a modified Harten, Lax and van Leer (HLL) flux, 
as used in \cite{Cai2013microflow}, is adopted for $\bar{F}_{i \pm 1/2} \bra{\cdot,
	\cdot}$.
In order to write the numerical flux in the form \cref{eq:dis-expansion-F-Q}, all distribution functions involved in \cref{eq:x-fv-dis}, including those in neighboring grid cells, need to be represented as members of $\mF_M^{[\bu_i,\theta_i]}$.
This requires a projection onto $\mF_{M}^{[\bu_{i}, \theta_{i}]}$ for any function in $\mF_{M}^{[\bvpi,\vartheta]}$ if $\bvpi\neq \bu_{i}$ or $\vartheta\neq \theta_{i}$, regardless of the choice of the numerical flux. An efficient implementation of the required projection, which preserves all moments up to the $M$th order, can be found in \cite{Hu2020}.

It remains to determine the distribution functions on cell boundaries
$f_{i}^{-}$ and $f_{i}^{+}$, which are to be constructed as functions in
$\mF_{M}^{[\bu_{i}^{-}, \theta_{i}^{-}]}$ and $\mF_{M}^{[\bu_i^{+},
\theta_{i}^{+}]}$. 
Obviously, it is equivalent to determine the parameters
$\bu_{i}^{\pm}$, $\theta_{i}^{\pm}$ and all coefficients $f_{i,\alpha}
^{\pm}$ with $\abs{\alpha} \leq M$. In our method, we first calculate the
reconstructed vectors of conservative variables on the cell boundaries as
\begin{align}
	\label{eq:recon-U}
	U_{i}^{+} = U_{i} + \frac{\Delta x}{2} \bs_{i}, \quad U_{i}^{-} = U_{i} -
	\frac{\Delta x}{2} \bs_{i},
\end{align}
where $U_{i} = [\rho_{i}, \rho_{i} \bu_{i}, \rho_{i} E_{i}]^{\mathsf{T}}$ is the vector
on the $i$th cell.
The parameters $\bu_i^\pm$ and $\theta_i^\pm$ are then extracted from
$U_i^\pm$.
Following \cite{Hu2019,Hu2023}, the coefficients are reconstructed as
\begin{align}
	\label{eq:recon-f-alpha}
	f_{i,\alpha}^{+} = f_{i,\alpha} + \frac{\Delta x}{2} d_{i, \alpha}, \quad &
    f_{i,\alpha}^{-} = f_{i,\alpha} - \frac{\Delta x}{2} d_{i, \alpha}. 
\end{align}
A first-order scheme can be obtained by setting all slopes $\bs_{i}$ and
$d_{i,\alpha}$ to $0$, 
while a second-order discretization is achieved by employing
\begin{align}
	\label{eq:slopes}
	\bs_{i} = \frac{U_{i+1} - U_{i-1}}{2 \Delta x}, \quad 
	d_{i,\alpha} = \frac{f_{i+1, \alpha} - f_{i-1, \alpha}}{2 \Delta x}.
\end{align}
Finally, appropriate boundary conditions are necessary to complete the discretization. The Maxwell boundary condition is employed throughout our numerical experiments.
We refer to \cite{Cai2012,Cai2013microflow} for details on such boundary
conditions.

\subsection{FIM framework}
\label{sec:alternating}
For convenience, the discretization \cref{eq:x-fv-dis} over the $i$th cell is rewritten in the residual form
\begin{align}
	\label{eq:res-HMS}
	\mR_i(\bbf)=0,
\end{align}
where $\mR_i(\bbf)$ denotes the local residual corresponding to the left-hand side of \cref{eq:x-fv-dis}, and $\bbf$ is the global vector composed of all cell-averaged distribution functions $f_i$.
The above discretization results in a nonlinear system coupling all unknowns, including $\bu_i$, $\theta_i$, and $f_{i,\alpha}$ for $i=0,1,\dots,N_x-1$ and $\abs{\alpha}\leq M$.
A simple relaxation method for solving this nonlinear system is the Richardson
iteration given by  
\begin{align}
	\label{eq:Richardson-HMS}
	f^{n+1}_{i} = f^{n}_{i} + \omega \mR_{i} \bra{\bbf^n},
\end{align}
where the superscript $n$ is the index for iteration steps. 
The relaxation parameter $\omega$ is selected according to the CFL condition
\begin{align}
	\label{eq:dt}
	\omega = \min_i \omega_i,
\end{align}
where the local relaxation parameter $\omega_i$ satisfies
\begin{align}
	\label{eq:dt-local}
	\omega_{i} \bra{\frac{|u_{i,1}^n| + C_{M+1} \sqrt{\theta_{i}^n}}{\Delta x}} < 1.
\end{align}
Here, $u_{i,1}^n$ is the first component of the vector $\bu_{i}^n$, and
$C_{M+1}$ is the maximal root of the Hermite polynomial of degree $M+1$.
Under Grad's expansion, the left- and right-hand sides of \cref{eq:Richardson-HMS} are represented in different function spaces, namely, $\mF_{M}^{[\bu_i^{n+1},\theta_i^{n+1}]}$ and $\mF_{M}^{[\bu_i^n,\theta_i^n]}$, respectively. Therefore, the evaluated right-hand side should be projected onto $\mF_{M}^{[\bu_i^{n+1},\theta_i^{n+1}]}$ using the projection mentioned in the previous subsection. The Richardson iteration \cref{eq:Richardson-HMS} can be interpreted as an explicit Euler scheme with the relaxation parameter $\omega$ serving as the pseudo-time step. As $\Kn$ decreases, its convergence becomes increasingly slower, resulting in poor efficiency in the near-continuum flow regime.

To accelerate the convergence, the FIM method proposed in \cite{Li2026} employs an alternating iteration framework consisting of a moment stage for the high-order moment system \cref{eq:moment-system} and a hydrodynamic stage for the hydrodynamic equations \cref{eq:HEs}.
Specifically, in the moment stage, $\gamma_1$ moment iterations are performed starting from the current moment solution $\bbf^n$ to obtain an intermediate approximation $\tilde{\bbf}$.
In the hydrodynamic stage, using the macroscopic quantities $\tilde{\rho}$, $\tilde{\bu}$, and $\tilde{\theta}$ extracted from $\tilde{\bbf}$ as the initial guess, at most $\gamma_2$ hydrodynamic iterations are performed. 
The resulting macroscopic solution is subsequently used to update $\tilde{\bbf}$ and construct the new moment solution $\bbf^{n+1}$.
The above procedure is summarized in \textbf{\Cref{alg:FIM}}. 
In \cite{Li2026}, the Richardson iteration \cref{eq:Richardson-HMS} or its symmetric Gauss-Seidel (SGS) version is used in the moment stage, while a similar iterative scheme with a consistent spatial discretization is employed in the hydrodynamic stage, as described below.

\begin{algorithm}[!htb]
	\renewcommand{\algorithmicrequire}{\textbf{Input:}}
	\renewcommand{\algorithmicensure}{\textbf{Output:}}
	\renewcommand{\algorithmicloop}{\textbf{begin}}
	\renewcommand{\algorithmicendloop}{\algorithmicend}
\caption{One step of the FIM alternating iteration \cite{Li2026} }
\label{alg:FIM}
\begin{algorithmic}[1]
	\REQUIRE Grid, $\bbf^{n}$
	\ENSURE The new approximation $\bbf^{n+1}$ denoted by $\bbf^{n+1}=\text{FIM}\bra{\bbf^{n}}$
	\vspace*{0.7em}
	\STATE Perform $\gamma_1$ moment iterations for
	\cref{eq:moment-system} to obtain a new approximation $\tilde{\bbf}$ from
	$\bbf^{n}$;
	\STATE Compute $\tilde{\rho}$, $\tilde{\bu}$ and $\tilde{\theta}$ on all grid
	cells from $\tilde{\bbf}$;
	\STATE Prepare hydrodynamic initial guess $\bU^{0}$ with $U_i^{0} =
	\left[\tilde{\rho}_{i}, \tilde{\rho}_{i} \tilde{\bu}_{i}, \frac{1}{2}
	\tilde{\rho_{i}} \bra{3\tilde{\theta}_i + \abs{\tilde{\bu}_i}^2} \right]
	^{\mathsf{T}}$ and set $m=0$;
	\STATE Calculate $\bU^{m+1}$ from $\bU^{m}$ by performing one hydrodynamic iteration for \cref{eq:HEs} 
	with compatible constitutive relations $\bsigma$, $\bq$ and boundary conditions.
	If $(m+1) \geq \gamma_2$ or $\norm{\bU^{m+1} - \bU^{m}}_1 \leq tol$, then
	break; otherwise, let $m:=m+1$, and repeat the current step;
	\STATE Extract $\rho^{m+1}$, $\bu^{m+1}$ and $\theta^{m+1}$ from $\bU^{m+1}$;  	
	\STATE Construct $\bbf^{n+1}$  from $\tilde{\bbf}$ by replacing $\tilde{\rho}$,
	$\tilde{\bu}$ and $\tilde{\theta}$ with $\rho^{m+1}$, $\bu^{m+1}$ and
	$\theta^{m+1}$, respectively.
\end{algorithmic}
\end{algorithm}

Let $U_i$ represent the cell average of $U$ over the $i$th grid cell $\mK_{i}$. 
Applying the finite volume method to \cref{eq:HEs} gives
\begin{align}
	\label{eq:res-HEs}
	\mR_{i}^{\text{hydro}} \bra{\bU} \coloneqq - \frac{1}{\Delta x}\left[ \hat{F}_{i+1/2} \bra{U_{i}^{+},
	U_{i+1}^{-}, \bsigma_{i}^{+}, \bsigma_{i+1}^{-}, \bq_{i}^{+}, \bq_{i+1}^{-} }
	- \hat{F}_{i-1/2} \bra{U_{i-1}^{+}, U_{i}^{-}, \bsigma_{i-1}^{+}, \bsigma_{i}
	^{-}, \bq_{i-1}^{+}, \bq_{i}^{-}} \right] = 0,
\end{align}
where $\mR_i^{\mathrm{hydro}}(\bU)$ denotes the local hydrodynamic residual, $\bU$ consists of all cell averages $U_i$, and $U_i^\pm$ are reconstructed according to \cref{eq:recon-U}.
Similarly, the interface values of the stress tensor $\bsigma$ and heat flux $\bq$ are reconstructed as
\begin{align}
	\label{eq:recon_sigma_q}
	\begin{aligned}
		\bsigma_{i}^{+}  = \tilde{\bsigma}_{i} + \frac{\Delta x}{2}
		\tilde{\bs_i}, \quad  \bsigma_{i}^{-}  = \tilde{\bsigma}_{i} -
		\frac{\Delta x}{2} \tilde{\bs_i}, \quad
		\bq_{i}^{+}  = \tilde{\bq}_{i} + \frac{\Delta x}{2}
		\bar{\bs_i}, \quad  \bq_{i}^{-}  = \tilde{\bq}_{i} -
		\frac{\Delta x}{2} \bar{\bs_i},
	\end{aligned}
\end{align}
where $\tilde{\bsigma}_{i}$ and $\tilde{\bq}_i$ are evaluated from the intermediate moment solution $\tilde{\bbf}$.
The slopes vanish for the first-order
discretization, while for the second-order discretization they are given by
\begin{align}
	\label{eq:slopes_sigma_q}
	\begin{aligned}
		\tilde{\bs_i} = \frac{\tilde{\bsigma}_{i+1} - \tilde{\bsigma}_{i-1}}
		{2\Delta x}, \quad \bar{\bs_i} = \frac{\tilde{\bq}_{i+1} - \tilde{\bq}
			_{i-1}}{2\Delta x},
	\end{aligned}
\end{align}
where $\tilde{\bsigma}_{i\pm1}$ and $\tilde{\bq}_{i\pm1}$ are also evaluated from $\tilde{\bbf}$ on the neighboring cells. 
The spatial discretization of the hydrodynamic equations is required to be consistent with that of the moment system, including the reconstruction procedure, numerical fluxes, and characteristic wave speeds used in the HLL flux.

In \cite{Li2026}, the boundary conditions for the hydrodynamic equations are obtained indirectly from the distribution-function boundary treatment of the moment system. In the present method, the ghost-cell values for the hydrodynamic equations are instead obtained directly from the explicit expressions given in \ref{sec:boundary-conditions}, thereby simplifying the boundary treatment.

The discrete system \cref{eq:res-HEs} can be solved using the Richardson
iteration given by
\begin{align}
	\label{eq:Richardson-HEs}
	U_i^{m+1} = U_i^{m} + \omega \mR_{i}^{\text{hydro}} \bra{\bU^m},
\end{align}
where the relaxation parameter $\omega$ is again determined by the CFL condition
given in \cref{eq:dt} and \cref{eq:dt-local}. Here, the superscript $m$
corresponds to the iterations for solving the hydrodynamic equations.
To improve convergence, the SGS method can be incorporated into
\cref{eq:Richardson-HEs}, yielding
\begin{align}
	\label{eq:SGS-HEs}
	U_i^{m+1} = U_i^{m} + \omega_i \mR_{i}^{\text{hydro}} \bra{\bU^*},
\end{align}
where $\bU^*$ is a vector 
initialized with $\bU^m$ and updated cell by cell as the new approximation $U_{i}
^{m+1}$ becomes available during the sweep. For convenience, this SGS iteration is referred to as the Hydro-SGS iteration hereafter.

Numerical experiments in \cite{Li2026} indicate that $\gamma_1$ is often set to $1$ or $2$ in practice, while $\gamma_2$ needs to be chosen appropriately to balance the per-iteration cost and the overall convergence rate. 
When $\Kn$ becomes small, $\gamma_2$ usually needs to be increased to tens or even hundreds to ensure that the hydrodynamic information is sufficiently captured.
Replacing the Richardson iteration \cref{eq:Richardson-HEs} with Hydro-SGS can improve the convergence of the hydrodynamic solver, but a relatively large $\gamma_2$ is still required.

\section{Convergence analysis of the FIM method}
\label{sec:fim3-convergence-analysis}

One hydrodynamic iteration \eqref{eq:Richardson-HEs} or \eqref{eq:SGS-HEs} is usually faster than one moment iteration due to the smaller number of unknowns. However, a large $\gamma_2$, which is often required at small $\Kn$ \cite{Li2026}, increases the computational cost of the hydrodynamic stage in each alternating iteration and may reduce the overall efficiency of the FIM method. In order to better understand this behavior and motivate further improvements, we analyze the convergence of the FIM method in the near-continuum regime based on a linearized one-dimensional BGK model. This simplified model retains the essential coupling between the primary macroscopic quantities and nonequilibrium moments while allowing an explicit local Fourier analysis. This section summarizes the main results, while the detailed derivations are provided in \ref{app:fim3-fourier-analysis}.

\subsection{Linearized model and the fast iterative method}
\label{subsec:fim3analysis-model-scheme}

Following \cite{Dong2024}, the moment equations for the linearized one-dimensional BGK model take the form 
\begin{equation}
 \mathbf{A}\frac{d\boldsymbol{u}}{dx}
 =\frac{1}{\varepsilon}\mathbf{L}\boldsymbol{u},
 \qquad
 \boldsymbol{u}=(u_0,u_1,\ldots,u_M)^{\mathsf{T}},
 \qquad
 \varepsilon=\mathrm{Kn},
 \label{eq:fim3analysis-model}
\end{equation}
where $M\geq3$ and
\begin{equation}
 \mathbf{A}=
 \begin{pmatrix}
 0&1&&&\\
 1&0&2&&\\
 &1&\ddots&\ddots&\\
 &&\ddots&\ddots&M\\
 &&&1&0
 \end{pmatrix},
 \qquad
 \mathbf{L}=\mathrm{diag}(0,0,0,-1,\ldots,-1).
 \label{eq:fim3analysis-matrices}
\end{equation}
The first three rows of \eqref{eq:fim3analysis-model}, describing the evolution of the primary macroscopic quantities, are
\begin{equation}
 \frac{d}{dx}\left(\mathbf{A}_h\boldsymbol{U}+\boldsymbol{b}\,u_3\right)
 =\boldsymbol{0},
 \label{eq:fim3analysis-hydroequation}
\end{equation}
where
\begin{equation}
 \boldsymbol{U}=(u_0,u_1,u_2)^{\mathsf{T}},
 \qquad
 \mathbf{A}_h=
 \begin{pmatrix}
 0&1&0\\
 1&0&2\\
 0&1&0
 \end{pmatrix},
 \qquad
 \boldsymbol{b}=\begin{pmatrix}0\\0\\3\end{pmatrix}.
 \label{eq:fim3analysis-hydroblock}
\end{equation}
With first-order spatial discretization using the HLL flux, the moment system \eqref{eq:fim3analysis-model} becomes
\begin{equation}
  \frac{\mathbf{A}}{2\Delta x}
  \left(\boldsymbol{u}_{j+1}-\boldsymbol{u}_{j-1}\right)
  -\frac{C_{M+1}}{2\Delta x}
  \left(\boldsymbol{u}_{j+1}-2\boldsymbol{u}_j
  +\boldsymbol{u}_{j-1}\right)
  =\frac{1}{\varepsilon}\mathbf{L}\boldsymbol{u}_j.
  \label{eq:fim3analysis-discrete-equation}
\end{equation}
The corresponding discretization of \eqref{eq:fim3analysis-hydroequation} is given by the first three rows of this system, with $u_3$ held fixed during the hydrodynamic stage.

We focus on the FIM method for \eqref{eq:fim3analysis-discrete-equation}, with SGS iterations applied to both the moment and hydrodynamic stages, and assume that the relaxation parameter $\omega$ is independent of $\varepsilon$. Define $\nu = \omega C_{M+1} / \Delta x$ with $0<\nu<1$, and let $\boldsymbol{u}_j^{(n)}$ be the approximate solution in the $j$th grid cell after $n$ FIM iterations. The next FIM iteration is defined by the following steps:
\begin{enumerate}
  \item Moment SGS: $\gamma_1$ implicit-explicit SGS (IMEX-SGS) iterations in the moment stage, defined by
  \begin{itemize}
    \item Initialize $\boldsymbol{u}_j^{(n,0)}=\boldsymbol{u}_j^{(n)}$, then perform the following two sweeps for each $s=0,1,\ldots,\gamma_1-1$:
    \item Forward sweep: in increasing order of $j$, solve for $\boldsymbol{u}_j^{(n,s+1/2)}$ from
      \begin{equation}
        \left(\mathbf{I}-\frac{\omega}{\varepsilon}\mathbf{L}\right)
        \boldsymbol{u}_j^{(n,s+1/2)}
        =(1-\nu)\boldsymbol{u}_j^{(n,s)}
        +\frac{\nu}{2}\left(\mathbf{I}+\frac{\mathbf{A}}{C_{M+1}}\right)
        \boldsymbol{u}_{j-1}^{(n,s+1/2)}
        +\frac{\nu}{2}\left(\mathbf{I}-\frac{\mathbf{A}}{C_{M+1}}\right)
        \boldsymbol{u}_{j+1}^{(n,s)};
        \label{eq:fim3analysis-kin-forward-state}
      \end{equation}
    \item Backward sweep: in decreasing order of $j$, solve for $\boldsymbol{u}_j^{(n,s+1)}$ from
      \begin{align}
        \left(\mathbf{I}-\frac{\omega}{\varepsilon}\mathbf{L}\right)
        \boldsymbol{u}_j^{(n,s+1)}
        =(1-\nu)\boldsymbol{u}_j^{(n,s+1/2)}
        +\frac{\nu}{2}\left(\mathbf{I}+\frac{\mathbf{A}}{C_{M+1}}\right)
        \boldsymbol{u}_{j-1}^{(n,s+1/2)}
        +\frac{\nu}{2}\left(\mathbf{I}-\frac{\mathbf{A}}{C_{M+1}}\right)
        \boldsymbol{u}_{j+1}^{(n,s+1)}.
        \label{eq:fim3analysis-kin-backward-state}
      \end{align}
  \end{itemize}
  \item Hydrodynamic SGS: $\gamma_2$ Hydro-SGS iterations in the hydrodynamic stage, defined by
  \begin{itemize}
    \item Set $\boldsymbol{U}_j^{(n,0)}=(u_{j,0}^{(n,\gamma_1)},u_{j,1}^{(n,\gamma_1)},u_{j,2}^{(n,\gamma_1)})^{\mathsf{T}}$, then perform the following two sweeps for each $\ell=0,1,\ldots,\gamma_{2}-1$:
    \item Forward sweep: in increasing order of $j$, solve for $\boldsymbol{U}_j^{(n,\ell+1/2)}$ from
      \begin{equation}
        \begin{aligned}
          \boldsymbol{U}_j^{(n,\ell+1/2)}=
          (1-\nu)\boldsymbol{U}_j^{(n,\ell)}
          +\frac{\nu}{2}\left(\mathbf{I}+\frac{\mathbf{A}_h}{C_{M+1}}\right)
          \boldsymbol{U}_{j-1}^{(n,\ell+1/2)}
          +\frac{\nu}{2}\left(\mathbf{I}-\frac{\mathbf{A}_h}{C_{M+1}}\right)
          \boldsymbol{U}_{j+1}^{(n,\ell)} 
          -\frac{\nu}{2C_{M+1}}\boldsymbol{b}
          \left(u_{j+1,3}^{(n,\gamma_1)}-u_{j-1,3}^{(n,\gamma_1)}\right);
        \end{aligned}
        \label{eq:fim3analysis-hydro-forward-state}
      \end{equation}
    \item Backward sweep: in decreasing order of $j$, solve for $\boldsymbol{U}_j^{(n,\ell+1)}$ from
      \begin{equation}
        \begin{aligned}
          \boldsymbol{U}_j^{(n,\ell+1)}=
          (1-\nu)\boldsymbol{U}_j^{(n,\ell+1/2)}
          +\frac{\nu}{2}\left(\mathbf{I}+\frac{\mathbf{A}_h}{C_{M+1}}\right)
          \boldsymbol{U}_{j-1}^{(n,\ell+1/2)}
          +\frac{\nu}{2}\left(\mathbf{I}-\frac{\mathbf{A}_h}{C_{M+1}}\right)
          \boldsymbol{U}_{j+1}^{(n,\ell+1)} 
          -\frac{\nu}{2C_{M+1}}\boldsymbol{b}
          \left(u_{j+1,3}^{(n,\gamma_1)}-u_{j-1,3}^{(n,\gamma_1)}\right).
        \end{aligned}
        \label{eq:fim3analysis-hydro-backward-state}
      \end{equation}
  \end{itemize}
  \item The solution after the $(n+1)$th FIM iteration is then given by
  \begin{equation}
    \boldsymbol{u}_j^{(n+1)}
    =\left(
    (\boldsymbol{U}_j^{(n,\gamma_2)})^{\mathsf{T}},
    u_{j,3}^{(n,\gamma_1)},u_{j,4}^{(n,\gamma_1)},\ldots,u_{j,M}^{(n,\gamma_1)}
    \right)^{\mathsf{T}}.
    \label{eq:fim3analysis-reconstruction}
  \end{equation}
\end{enumerate}
The results of the Fourier analysis of this numerical method are presented in the next subsection.

\subsection{Results of the Fourier analysis and low-frequency behavior}
\label{subsec:fim3analysis-amplification}

Define the error after $n$ FIM iterations by
\begin{equation}
 \delta\boldsymbol u_j^{(n)}
 =\boldsymbol u_j^{(n)}-\boldsymbol u_j,
 \label{eq:fim3analysis-outer-error}
\end{equation}
where $\boldsymbol{u}_j$ is the exact solution of
the discrete system \eqref{eq:fim3analysis-discrete-equation}. We neglect boundary effects associated with the sweeps and assume that the solution is periodic in space with period $2\pi$. Consider a Fourier mode of the form
\begin{equation}
 \delta\boldsymbol u_j^{(n)}
 =\widehat{\boldsymbol e}^{(n)}(k)e^{\mathrm i j k\Delta x},
 \qquad
 \widehat{\boldsymbol e}^{(n)}(k)\in\mathbb C^{M+1}.
 \label{eq:fim3analysis-full-fourier-error}
\end{equation}
Since the numerical scheme is linear, we define its Fourier error-propagation matrix $\mathbf T(\varepsilon,k)$ by
\begin{equation}
 \widehat{\boldsymbol e}^{(n+1)}(k)
 =\mathbf T(\varepsilon,k)\widehat{\boldsymbol e}^{(n)}(k),
 \qquad
 \mathbf T(\varepsilon,k)\in\mathbb C^{(M+1)\times(M+1)}.
 \label{eq:fim3analysis-fourier-iteration-matrix}
\end{equation}
The maximum magnitude of the amplification factors is given by the spectral radius of $\mathbf T(\varepsilon,k)$.

As shown in \ref{subsec:fim3app-leading-factor}, at most three eigenvalues of $\mathbf T(\varepsilon,k)$ have $O(1)$ leading terms in the near-continuum limit $\varepsilon\to0$. They are indexed by $\mu \in \{-\sqrt3,0,\sqrt3\}$, the eigenvalues of $\mathbf{A}_h$, and can be expanded as
\begin{equation}
 \Lambda_\mu(\varepsilon,k)
 =\Lambda_\mu^{(0)}(k)
 +\varepsilon\Lambda_\mu^{(1)}(k)
 +O(\varepsilon^2). 
 \label{eq:fim3analysis-eps-expansion}
\end{equation}
The remaining $M-2$ eigenvalues have magnitudes of $O(\varepsilon)$ and are therefore negligible in the near-continuum limit. For the three branches in \eqref{eq:fim3analysis-eps-expansion}, define
\begin{equation}
 \mathcal G_\mu^{(0)}(k)
 =\frac{
 \left[1-\nu+\dfrac{\nu}{2}\left(1-\dfrac{\mu}{C_{M+1}}\right)e^{\mathrm{i}k\Delta x}\right]
 \left[1-\nu+\dfrac{\nu}{2}\left(1+\dfrac{\mu}{C_{M+1}}\right)e^{-\mathrm{i}k\Delta x}\right]
 }{
 \left[1-\dfrac{\nu}{2}\left(1+\dfrac{\mu}{C_{M+1}}\right)e^{-\mathrm{i}k\Delta x}\right]
 \left[1-\dfrac{\nu}{2}\left(1-\dfrac{\mu}{C_{M+1}}\right)e^{\mathrm{i}k\Delta x}\right]
 }.
 \label{eq:fim3analysis-G}
\end{equation}
Their leading amplification factors are
\begin{equation}
 \Lambda_\mu^{(0)}(k)
 =\left[\mathcal G_\mu^{(0)}(k)\right]^{\gamma_1+\gamma_2},
 \qquad
 \mu\in\{-\sqrt3,0,\sqrt3\}.
 \label{eq:fim3analysis-lambda0}
\end{equation}
When $\gamma_1 = 1$, the first-order term $\Lambda_{\mu}^{(1)}(k)$ has also been derived. Due to its complexity, we refer readers to \ref{subsec:fim3app-first-order} (equation \eqref{eq:fim3app-lambda1-general}) for its expression.

It can be shown that $|\mathcal G_\mu^{(0)}(k)|$ attains its maximum value of $1$ at $k = 0$ for all $\mu\in\{-\sqrt3,0,\sqrt3\}$ (see Proposition \ref{prop:fim3app-G-modulus}). Meanwhile, when $|k\Delta x| < 2\pi/5$, we have $|\mathcal{G}_{0}^{(0)}(k)| \geq |\mathcal{G}_{\pm\sqrt3}^{(0)}(k)|$ (see Proposition \ref{prop:fim3app-G0-largest}). Therefore, we will mainly focus on the case $\mu = 0$ and small nonzero values of $|k\Delta x|$. In this case, we have
\begin{displaymath}
  \Lambda_0^{(0)}(k) = \exp\left( -\frac{2\nu}{2-\nu}(\gamma_1+\gamma_2)k^2\Delta x^2 \right) + O\left((k\Delta x)^4 \right).
\end{displaymath}
This result is consistent with the well-known smoothing property of the SGS method, indicating that the number of sweeps required to reduce a fixed low-frequency error mode by a constant factor should scale as $\Delta x^{-2}$ for fixed $\nu$. It also shows the benefit of the FIM method: when $\varepsilon$ is small, one can increase $\gamma_2$ while reducing $\gamma_1$, keeping their sum fixed, without affecting the leading-order convergence factor. This reduces the computational cost because the hydrodynamic stage is much less expensive than the moment stage.

However, this does not explain the observation in \cite{Li2026} that a larger $\gamma_2$ is required when the Knudsen number is smaller. To understand this, we consider the first-order term $\Lambda_0^{(1)}(k)$ for $\gamma_{1}=1$. Since $\Lambda_0^{(0)}(k) \geq 0$, we have
\begin{displaymath}
  |\Lambda_0(\varepsilon,k)|^2 = |\Lambda_0^{(0)}(k)|^2 + 2\varepsilon \Lambda_0^{(0)}(k) \operatorname{Re} \Lambda_0^{(1)}(k) + O(\varepsilon^2),
\end{displaymath}
and therefore, it suffices to study the real part of $\Lambda_0^{(1)}(k)$. Expanding \eqref{eq:fim3app-lambda1-general} for small nonzero $|k\Delta x|$ with $\gamma_2(k\Delta x)^2\ll1$, we obtain
\begin{equation}
  \operatorname{Re}\Lambda_0^{(1)}(k)
  =-\frac{2\nu}{C_{M+1}(2-\nu)^2}
  \left[(4-\nu)+2\gamma_2(2-\nu)\right]k^2\Delta x+O((k\Delta x)^4).
 \label{eq:fim3analysis-lambda1-low}
\end{equation}
Since the leading-order term of $\operatorname{Re}\Lambda_0^{(1)}(k)$ is negative for $0<\nu<1$ and $\Lambda_0^{(0)}(k)$ is positive, the amplification factor $|\Lambda_0(\varepsilon,k)|$ increases towards $|\Lambda_0^{(0)}(k)|$ as $\varepsilon$ decreases.
As a result, for a smaller Knudsen number, one needs to increase $\gamma_2$ to reduce $|\Lambda_0^{(0)}(k)|$ and make $\operatorname{Re}\Lambda_0^{(1)}(k)$ more negative in order to maintain the same convergence factor. This explains the observation in \cite{Li2026}.

\subsection{Motivation for an improved hydrodynamic solver}
\label{subsec:fim3analysis-multilevel-motivation}

The preceding analysis identifies the remaining bottleneck of the FIM method: the hydrodynamic stage aims to solve the system \eqref{eq:fim3analysis-hydroequation} with SGS iterations, but the number of iterations still depends strongly on both the Knudsen number and the grid resolution. To overcome these limitations, we seek a more efficient solver for the hydrodynamic stage that can achieve the effect of a large $\gamma_2$ in only a few iterations. Since the actual hydrodynamic system \eqref{eq:HEs} is nonlinear, we will apply Newton's method to achieve fast convergence. The linear system arising in each Newton iteration is then solved using a multigrid method to efficiently reduce errors over a broad range of spatial frequencies and mitigate the deterioration of convergence under grid refinement.

\section{Improved FIM method}
\label{sec:numerical-scheme}
Based on the considerations above, we now develop the improved FIM method for the full nonlinear high-order moment system \cref{eq:res-HMS}.
Before introducing the Newton-multigrid iteration for the hydrodynamic equations, we first present an IMEX scheme to address the stiffness of the Shakhov collision term in the moment stage.

\subsection{IMEX scheme for the Shakhov model} 
\label{sec:IMEX}
Since the collision term is not taken into account in determining the relaxation parameter $\omega$ by \cref{eq:dt}, the Richardson iteration \cref{eq:Richardson-HMS} may become unstable as the collision term becomes increasingly stiff with decreasing $\Kn$.
To alleviate this stiffness, we follow the IMEX framework in \cite{Li2026} to treat the Shakhov collision term implicitly and the convection term explicitly, which yields
\begin{align}
	\label{eq:SIS}
	f^{n+1}_{i} - \omega \mQ \bra{f_{i}^{n+1}} = f^{n}_{i} - \omega \mA_i
	\bra{\bbf^n},
\end{align}
where $\mA_i \bra{\bbf^n}$ denotes the remaining part of $-\mR_i(\bbf^n)$, namely,
the contribution from the spatial discretization of the convection term. Using the truncated expansions \cref{eq:truncated-dis-i} and \cref{eq:dis-expansion-F-Q}, the above equation can be written as
\begin{align}
	\label{eq:expansion-SIS}
	\sum_{|\alpha| \leq M} \bra{ f^{n+1}_{i, \alpha} - \omega \mQ^{n+1}
		_{i, \alpha}} \mH_{\alpha} ^{[\bu^{n+1}_{i}, \theta^{n+1}
		_{i}]} (\bv) = \sum_{|\alpha| \leq M} g_{i,\alpha} \mH_{\alpha}
	^{[\bu^{n}_{i}, \theta^{n}_{i}]} (\bv).
\end{align}
Here, $g_{i,\alpha}$ denotes the coefficient obtained from the right-hand side of \cref{eq:SIS}, while the Hermite expansion coefficients $\mQ_{i,\alpha}^{n+1}$ of the Shakhov collision term are given according to \ref{sec:expression_HME} by
\begin{align}
	\label{eq:Q-Shakhov}
	\mQ_{i,\alpha}^{n+1} = \left \{
	\begin{aligned}
			& 0 , &  & \text{if} \quad |\alpha| = 0,1, \\
			& \bar{\nu}_i^{n+1} \bra{\frac{1-\Pr}{5}q_{i,j}^{n+1} - f_{i,\alpha}^{n+1}},
			& & \text{if} \quad \alpha = 2e_k + e_j, \quad k,j = 1,2,3, \\
			& -\bar{\nu}_i^{n+1} f_{i,\alpha}^{n+1},  & & \text{otherwise}.
		\end{aligned}\right.
\end{align}
Multiplying both sides of \cref{eq:expansion-SIS} by $\left(1,\bv,\vert \bv \vert^{2}\right) ^\mathsf{T}$ and integrating over the velocity space, we obtain
\begin{align}
\label{eq:semi-implicit-u-theta-q}
\begin{aligned}
	& f_{i,0}^{n+1} = g_{i,0}, \quad \bu_{i}^{n+1} = \bu_{i}^{n} +
	\frac{1}{g_{i,0}}\left(g_{i,e_{1}}, g_{i,e_{2}},g_{i,e_{3}}\right)^\mathsf{T}, \quad \theta_{i}^{n+1} = \theta_{i}^{n} + \frac{2}{3}\sum\limits_{d=1}^{3}
	\frac{g_{i,2e_{d}}}{g_{i,0}} - \frac{\left\vert \bu_{i}^{n+1} - \bu_{i}
          ^{n}\right\vert^{2}}{3}.
\end{aligned}
\end{align}
The right-hand side of \cref{eq:expansion-SIS} is then projected onto the same function space $\mF_{M}^{[\bu_i^{n+1},\theta_i^{n+1}]}$ as the left-hand side, yielding
\begin{align}
	\label{eq:expansion-SIS-(n+1)}
	 \sum_{|\alpha| \leq M} \bra{ f^{n+1}_{i, \alpha} - \omega \mQ^{n+1}
	 _{i, \alpha}} \mH_{\alpha} ^{[\bu^{n+1}_{i}, \theta^{n+1}
	 _{i}]} (\bv) = \sum_{|\alpha| \leq M} \tilde{g}_{i,\alpha} \mH_{\alpha}
	 ^{[\bu^{n+1}_{i}, \theta^{n+1}_{i}]} (\bv),
\end{align}
with $\tilde{g}_{i,\alpha}$ denoting the coefficients after projection.
Substituting \cref{eq:Q-Shakhov} into \cref{eq:expansion-SIS-(n+1)} and matching the coefficients gives
\begin{align}
	 \label{eq:update-Shakhov} 
	 f_{i,\alpha}^{n+1} = \left \{
	 \begin{aligned}
		 & \tilde{g}_{i, \alpha}, & & \text{if} \quad |\alpha| = 0,1, \\
		 & \frac{1}{1 + \omega \bar{\nu}_{i}^{n+1}} \bra{ \tilde{g}_{i,\alpha} +
		 \omega \bar{\nu}_{i}^{n+1}\frac{1 - \Pr}{5} q_{i,j}^{n+1}},  & &
		 \text{if} \quad \alpha = 2e_k + e_j, \quad k,j = 1,2,3, \\
		 & \frac{1}{1 + \omega \bar{\nu}_{i}^{n+1}} \tilde{g}_{i,\alpha} ,  & &
		 \text{otherwise},
	 \end{aligned}\right.
\end{align}
where the updated heat flux is obtained from \cref{eq:moments-relation} as
\begin{equation*}
  q_{i,j}^{n+1} =\frac{2\tilde{g}_{i,3e_j}+\sum_{d=1}^3\tilde{g}_{i,2e_d+e_j}}
  {1+\omega\bar{\nu}_{i}^{n+1}\Pr}.
\end{equation*}

One step of the above IMEX scheme is summarized in
\textbf{\Cref{alg:semi-imp}}.
The implicit treatment of the Shakhov collision term improves the numerical stability of the moment iteration, whereas the computational cost per iteration remains essentially the same as that of the Richardson iteration
\cref{eq:Richardson-HMS}.

\begin{algorithm}[!htb]
  \renewcommand{\algorithmicrequire}{\textbf{Input:}}
  \renewcommand{\algorithmicensure}{\textbf{Output:}}
  \renewcommand{\algorithmicloop}{\textbf{begin}}
  \renewcommand{\algorithmicendloop}{\algorithmicend}
	\caption{One step of the IMEX iteration for \cref{eq:res-HMS} with the Shakhov model}
  \label{alg:semi-imp}
  \begin{algorithmic}[1]
	  \REQUIRE Grid, $\bbf^{n}$
	  \ENSURE The new approximation $\bbf^{n+1}$ denoted by $\bbf^{n+1}
	  =\text{IMEX}\bra{\bbf^{n}}$
    \vspace*{0.7em}
	\STATE Determine the relaxation parameter $\omega$ from ${\bbf}^{n}$ by \cref{eq:dt};
	\STATE On each grid cell, calculate the right-hand side of \cref{eq:SIS} and denote it by $g_{i}$; 
        \STATE Compute $\bu_{i}^{n+1}$ and $\theta_{i}^{n+1}$ from $g_{i}$ by \cref{eq:semi-implicit-u-theta-q};
	\STATE Project $g_{i}$ onto $\mF_{M}^{[\bu_{i}^{n+1}, \theta_{i}^{n+1}]}$ to obtain $\tilde{g}_{i}$;
	\STATE Obtain the new approximation ${f}_{i}^{n+1}$ from $\tilde{g}_{i}$ by \cref{eq:update-Shakhov};
	\end{algorithmic}
\end{algorithm}

The IMEX scheme \cref{eq:SIS}
  can be naturally reformulated as an IMEX-SGS iteration, with
  cellwise update given by
\begin{align}
	\label{eq:SISGS}
	f^{n+1}_{i} - \omega_i \mQ \bra{f_{i}^{n+1}} = f^{n}_{i} - \omega_i \mA_i
	\bra{\bbf^*},
\end{align}
where the local relaxation parameter $\omega_i$ replaces the global one, and $\bbf^*$ is initialized as $\bbf^n$. The $i$th component of $\bbf^{*}$ is immediately replaced by $f_i^{n+1}$ once the updated value becomes available during the sweeping process.

\subsection{Newton-multigrid iteration for hydrodynamic equations}
\label{eq:newton}
To further enhance the efficiency and robustness of the FIM method, the time-integration-based schemes for the hydrodynamic equations, namely \cref{eq:Richardson-HEs} and its SGS version, Hydro-SGS \cref{eq:SGS-HEs}, are replaced by a Newton-multigrid method.
Following \cite{Li2007,Hu2010,Wu2016multigrid}, the Newton iteration leads to the linearization
\begin{align}
	\label{eq:Newton-ite}
	\sum_{j \in \mI(i)} \pd{\mR_{i}^{\text{hydro}} \bra{\bU^{m}}}{U_{j}} \delta
	U_{j}^{m} = - \mR_{i}^{\text{hydro}} \bra{\bU^{m}},
\end{align}
where $\mI(i)$ is the set of the indices for the cells related to the
computation of the local residual $\mR_{i}^{\text{hydro}} \bra{\bU^{m}}$. 
Once the Newton correction $\delta U_i^m$ is obtained, the solution is updated by
\begin{align}
	\label{eq:Newton-update}
	U_{i}^{m+1} = U_{i}^{m} + \tau \delta U_{i}^{m},
\end{align}
where $\tau$ is a relaxation parameter often set as $1$.
The closed-form expression of the Jacobian 
$\partial{\mR_{i}^{\text{hydro}}} / \partial{U_j}$ 
can be obtained by the chain rule, but the result is usually complicated and depends on the choice of the numerical flux and the implementation of boundary conditions. Therefore, instead of deriving the Jacobian analytically, a numerical differentiation method is applied to evaluate the entries of the Jacobian matrix as
\begin{align}
	\label{eq:numerical-diff}
	& \pd{\mR_{i,l}^{\text{hydro}} \bra{\bU^{m}}}{U_{j,k}} \approx \frac{1}
	{\epsilon} \bra{\mR_{i,l}^{\text{hydro}} \bra{\bU^{m} + \epsilon \be} - 
	\mR_{i,l}^{\text{hydro}} \bra{\bU^{m}}},
\end{align}
where
$\partial{\mR_{i,l}^{\text{hydro}}} / \partial{U_{j,k}}$ 
denotes the derivative of the
$l$th component of the vector
$\mR_{i}^{\text{hydro}}$ 
with respect to the $k$th component of the vector
$U_{j}$. The unit vector $\be$ is of the same length as $\bU$, with its only
non-zero entry positioned where $U_{j,k}$ appears in $\bU$.
The parameter $\epsilon$ is given by $\epsilon =  \epsilon_1 U_{j,k}^{m} +
\epsilon_2$, where the scale factor $\epsilon_1=10^{-6}$ and the small perturbation
$\epsilon_2 = 10^{-8}$.

Since the Jacobian matrix may be singular, a suitable regularization is necessary to improve the robustness of the linear solver.
Unlike the problems considered in \cite{Li2007,Hu2010,Wu2016multigrid}, the hydrodynamic system \cref{eq:HEs} is not closed by itself but relies on the constitutive variables $\bsigma$ and $\bq$ provided by the high-order moment solution. These variables depend implicitly on the Knudsen number $\Kn$, and the resulting hydrodynamic system therefore inherits this dependence. Accordingly, in addition to the regularization based on the $\ell^2$ norm of the local residual, $\Kn$ is also incorporated into the regularization, leading to the following modified Newton iteration
\begin{align}
	\label{eq:regularize}
	\eta \bra{\Kn + \norm{\mR_{i}^{\text{hydro}} \bra{\bU^{m}}}_{l^2}} \delta U_{i}
	^{m} + \sum_{j \in \mI(i)} \pd{\mR_{i}^{\text{hydro}} \bra{\bU^{m}}}{U_{j}}
	\delta U_{j}^{m} = - \mR_{i}^{\text{hydro}} \bra{\bU^{m}},
\end{align}
where $\eta$ is a positive constant parameter and is set to $4$ in the numerical tests unless otherwise specified.

Combining \cref{eq:regularize} on all cells, a linear system for all the
unknowns over the computational domain can be rewritten as
\begin{align}
	\label{eq:linear-system}
	\bJ \delta \bU^{m} = - \bR^{m},
\end{align}
where $\bJ$ is the regularized Jacobian matrix, and $\delta \bU^{m}$ and $\bR^{m}$ are
vectors of $\delta U_{i}^{m}$ and $\mR_{i}^{\text{hydro}} \bra{\bU^{m}}$, respectively. 
To solve the linear system \cref{eq:linear-system} efficiently, the geometric multigrid method \cite{Li2007,Hu2011} is adopted, with the block LU-SGS method \cite{Chen2000} employed as the smoother on each grid level to damp high-frequency errors. A $V$-cycle iteration with two pre- and post-smoothing steps is used in the implementation.
The total number of grid levels is chosen such that the coarsest grid consists of $8$ cells in the one-dimensional case and $8\times8$ cells in the two-dimensional case \cite{Hu2023}.
With such a coarsest grid, the substantially reduced number of degrees of freedom (DOF) allows the problem to be solved accurately at a relatively low computational cost.

One step of the Newton-multigrid method is
summarized in \textbf{\Cref{alg:Newton-ite}}. 
The geometric multigrid method is employed as the inner linear solver in Step~2, where only a few multigrid iterations are performed. In the present work, two multigrid iterations per Newton iteration are used to balance computational cost and convergence efficiency.

\begin{algorithm}[!htb]
  \renewcommand{\algorithmicrequire}{\textbf{Input:}}
  \renewcommand{\algorithmicensure}{\textbf{Output:}}
  \renewcommand{\algorithmicloop}{\textbf{begin}}
  \renewcommand{\algorithmicendloop}{\algorithmicend}
	\caption{One step of the Newton-multigrid iteration for the hydrodynamic equations}
	\label{alg:Newton-ite}
	\begin{algorithmic}[1]
		\REQUIRE Grid, $\bU^{m}$
		\ENSURE The new approximation $\bU^{m+1}$ denoted by $\bU^{m+1}$ =
		Newton $\bra{\bU^{m}}$
		\vspace*{0.7em}
		\STATE Construct the regularized Jacobian matrix $\bJ$ using
		\cref{eq:numerical-diff,eq:regularize};
		\STATE Solve the linear system \cref{eq:linear-system} using the geometric multigrid method
		to obtain $\delta \bU^{m}$;
		\STATE Update $U_{i}^{m+1}$ using $U_{i}^{m+1} = U_{i}^{m} +
		\tau \delta U_{i}^{m}$ on every cell;
	\end{algorithmic}
\end{algorithm}

\begin{remark}
    As shown in the numerical experiments, one step of the Newton-multigrid iteration is generally sufficient to provide an effective correction to the primary macroscopic quantities within the FIM framework.
    Accordingly, $\gamma_2=1$ is adopted as the default choice in \textbf{\Cref{alg:FIM}}, which avoids case-dependent tuning of $\gamma_{2}$
    and contributes to reducing the overall computational time while maintaining fast convergence across all considered Knudsen numbers.
\end{remark}

\begin{remark}
  The Newton-multigrid method presented in \textbf{\Cref{alg:Newton-ite}} provides efficient spatial acceleration for the hydrodynamic equations. Incorporated into the alternating iteration framework, it yields an integrated multilevel solver that combines both velocity and spatial coarse-level corrections through the outer FIM iteration and the inner Newton-multigrid iteration, respectively, providing an alternative to the integrated multilevel construction in \cite{Li2026}.
\end{remark}

\subsection{FIM solvers}
\label{sec:FIM3}

Incorporating the improvement strategies introduced above, we
summarize a family of FIM solvers in \cref{tab:fim-solvers}, in which different
numerical schemes are adopted for the moment system and the hydrodynamic
equations.
When the Prandtl number is set to $\Pr=1$ and the first-order discretization is
adopted,
the FIM-1, FIM-2 and FIM-3 in \cref{tab:fim-solvers} 
actually correspond to the FIM solvers investigated in \cite{Li2026}, among which FIM-3 was shown to achieve the highest efficiency. 
Therefore, in the present study for the Shakhov model, we mainly focus on FIM-3 and FIM-4 with both first- and second-order discretization, and evaluate their performance through a series of numerical experiments.

\begin{table}[!htb]
  \tabcolsep=0.2cm
  \centering
  \caption{FIM solver setup.}
  \label{tab:fim-solvers}
  \begin{tabular}{ccc}
    \toprule
    \multirow{2}*{\quad FIM solver\quad} & \multicolumn{2}{c}{Schemes} \\
	\cline{2-3}
     & High-order moment system & Hydrodynamic equations \\
    \midrule
    FIM-1 & Richardson iteration \cref{eq:Richardson-HMS} & Richardson iteration
	  \cref{eq:Richardson-HEs} \\
	 FIM-2 & IMEX \cref{eq:SIS} & Richardson iteration \cref{eq:Richardson-HEs} \\
    FIM-3 & IMEX-SGS \cref{eq:SISGS} & Hydro-SGS \cref{eq:SGS-HEs} \\
	 FIM-4 & IMEX-SGS \cref{eq:SISGS} & Newton-multigrid method \cref{eq:linear-system,eq:Newton-update} \\ 
    \bottomrule
  \end{tabular}
\end{table}

\section{Numerical experiments}
\label{sec:experiments}
	
In this section, three numerical examples, i.e., the planar Couette flow, the lid-driven cavity flow and the heat transfer in a bottom-heated cavity, are carried out to assess the accuracy, efficiency, and robustness of the proposed first- and second-order FIM solvers. Throughout the experiments, argon gas is considered 
and the dimensionless collision frequency $\bar{\nu}$ is given by
\begin{align*}
	 \bar{\nu} = \sqrt{\frac{\pi}{2}} \frac{1}{\Kn} \rho \theta^{1 - w},
\end{align*}
where the viscosity index is $w=0.81$. 
Three Knudsen numbers in the near-continuum flow regime, namely, $\Kn=0.1$, $0.01$, and $0.001$, are considered. Following \cite{Li2026}, moment systems of orders $M=10$ and $9$ are employed for $\Kn=0.1$, $M=8$ and $7$ for $\Kn=0.01$, and $M=6$ and $5$ for $\Kn=0.001$, which have been shown to provide sufficiently accurate approximations for the corresponding Knudsen numbers.
The steady-state tolerance is set to $10^{-8}$ using the stopping criterion in \cite{Li2026}, and $\gamma_{1}=1$ is adopted for the FIM solvers. The CFL number for choosing $\omega_i$ in \cref{eq:dt-local} is $0.8$, and the Prandtl number is $\Pr=2/3$. 
Since the Maxwell boundary condition used here does not determine a unique steady state, the mass-conservation correction in \cite{Hu2014nmg,Hu2023} is also applied.

All numerical experiments are conducted using a C++ program on Intel Xeon Gold 6248 CPUs at 2.5 GHz. The two-dimensional computations are performed in parallel using four CPU threads.

\subsection{The planar Couette flow}
\label{sec:couette}

The first example is the planar Couette flow, which is a frequently used one-dimensional benchmark test \cite{Hu2014nmg,Zhu2016,Zhu2021}. For convenience, the same dimensionless settings as in \cite{Cai2013microflow,Hu2014nmg} are adopted in
our tests. The gas lies between two infinite parallel plates with a distance $L = 1$. These two plates are maintained at the dimensionless temperature $\theta^{W} = 1$ and move tangentially with a relative speed $u^W = 1.2577$. The computation starts at the global Maxwellian with $\rho^{0} = 1$, $\bu^{0} = 0$, and $\theta^{0} = 1$.
The gas is driven by the motion of the plates and eventually reaches a steady state.

\subsubsection{Solution validation and spatial accuracy verification}
\label{sec:couette-sol}

The profiles of the dimensionless density $\rho$, temperature $\theta$, vertical velocity $u_2$ and heat flux $q_1$, which are computed by the second-order FIM solver on a uniform grid with $N_x = 128$, are plotted in \cref{fig:couette_solutions}. The DUGKS solutions obtained by the dugksFoam solver \cite{Zhu2017foam} are provided as the references.
In the dugksFoam solver, the spatial domain of the Boltzmann-Shakhov equation is discretized using a second-order finite volume scheme, while the velocity space is represented by dimension-reduced Gauss-Hermite discrete velocity points. For the reference DUGKS computations, 100 physical-space cells are used for all three Knudsen numbers, together with $81\times81$ discrete velocity points for $\Kn=0.1$, and $28\times28$ discrete velocity points for $\Kn=0.01$ and $0.001$.
It is shown that our results agree well with the references at all three Knudsen numbers, confirming the validity of the high-order moment system.

In order to verify the spatial order of accuracy, the FIM solver is run on a sequence of uniform grids with $N_x$ ranging from $64$ to $4096$. 
Since the exact solution cannot be obtained analytically, the numerical solution obtained on the finest grid with $N_x=4096$ for each spatial discretization is taken as the benchmark solution. 
On a computational grid consisting of $N_x$ uniform cells, the $L^2$ error is defined by
\begin{align*}
	\text{Error} = \sqrt{\Delta x \sum_{i=0}^{N_x-1} \bra{\bar{m}_i - \bar{m}
	_i^{\text{finest}}}^2},
\end{align*}
where $\Delta x=1/N_x$, $\bar m_i$ denotes the cell average of either $\rho$ or $\theta$ over the $i$th computational cell, and $\bar m_i^{\mathrm{finest}}$ denotes the corresponding cell average from the benchmark solution.
The errors of the first-order and second-order schemes are displayed in \cref{fig:couette_error}, demonstrating that the expected orders of convergence are achieved for all considered Knudsen numbers.

\begin{figure}[!htb]
	 \centering
	 \subfloat[Density, $\rho$]{
		  \includegraphics[width=0.42\textwidth]{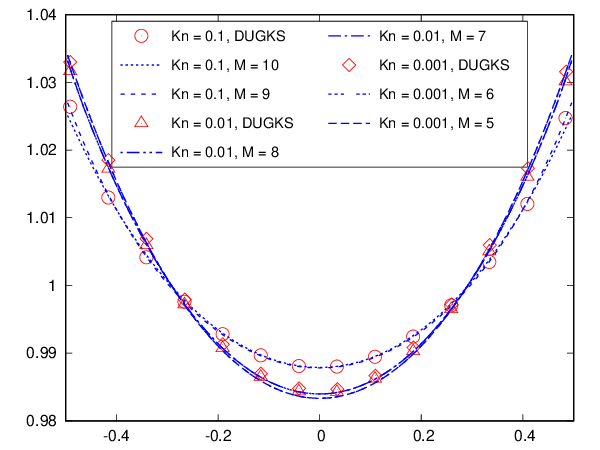}
	 }\hfill
	 \subfloat[Temperature, $\theta$]{
	      \includegraphics[width=0.42\textwidth]{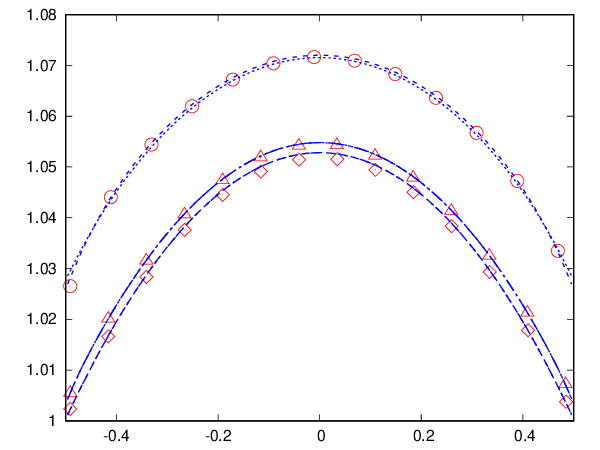}
	 }\\ 
	 \subfloat[Velocity, $u_2$]{
		  \includegraphics[width=0.42\textwidth]{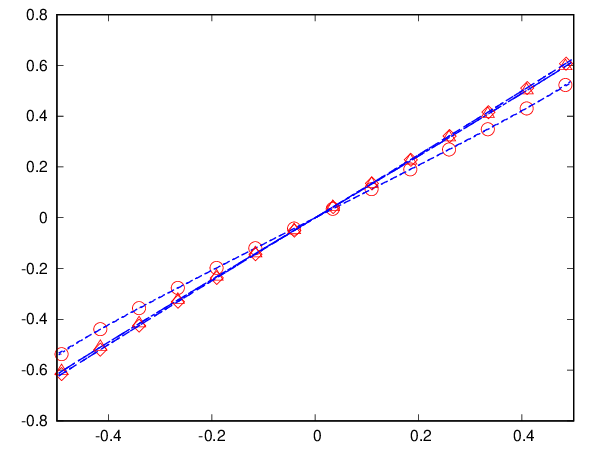}
	 }\hfill
	 \subfloat[Heat flux, $q_1$]{
	      \includegraphics[width=0.42\textwidth]{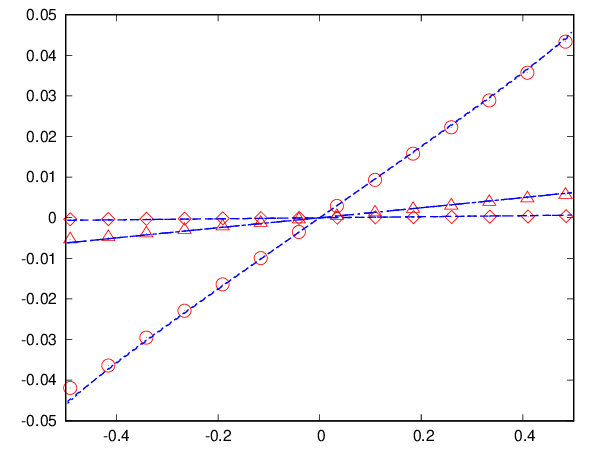}
	 }
	 \caption{Numerical solutions of the planar Couette flow with $N_x = 128$.}
	 \label{fig:couette_solutions}
\end{figure}

\begin{figure}[!htb]
	 \centering
	 \subfloat[First-order scheme]{
		 \includegraphics[width=0.42\textwidth]{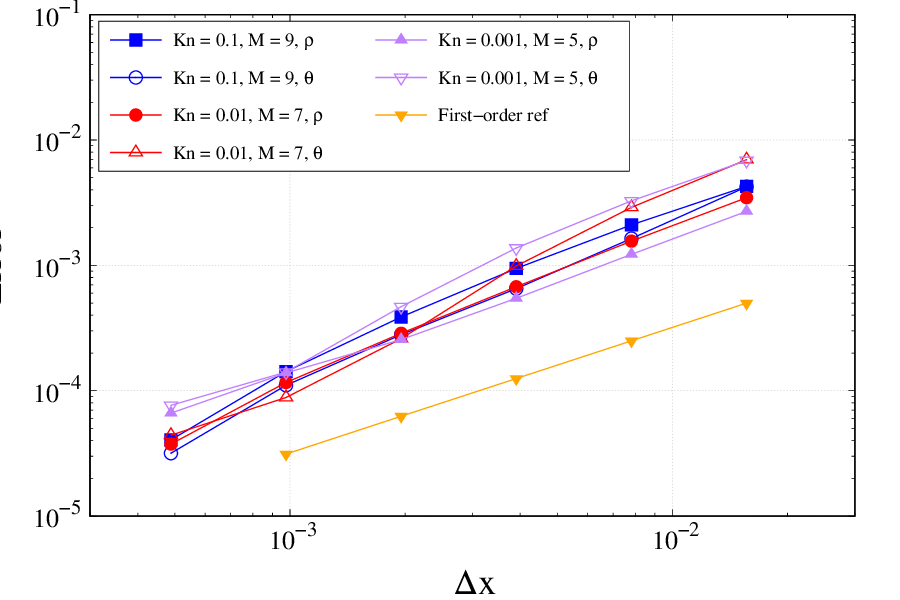}
	 }\hfill
	 \subfloat[Second-order scheme]{
		 \includegraphics[width=0.42\textwidth]{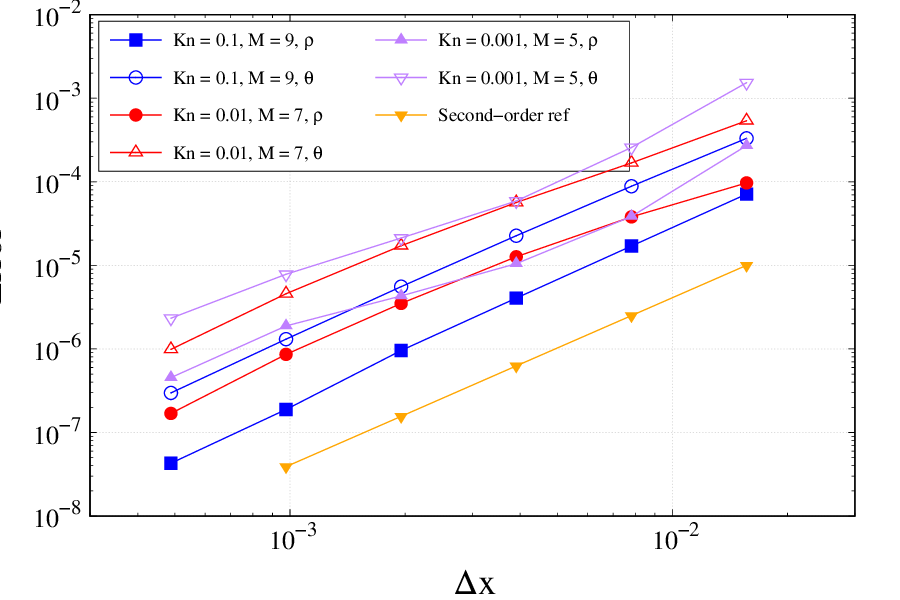}
	 }
	 \caption{The errors of the FIM method with various $\Kn$ for planar Couette
	 flow.}
	 \label{fig:couette_error}
\end{figure}

\subsubsection{Performance of first-order FIM solvers}
\label{sec:couette-eff-1st}

We begin by examining the first-order FIM solvers, aiming to compare their performance against that reported in \cite{Li2026}.
The total number of iterations and the associated wall-clock time spent by FIM-3 and FIM-4, as well as the IMEX-SGS iteration, to reach steady state on the grid with $N_x=256$ across three Knudsen numbers are listed in \cref{tab:couette-1st}.
It is observed that the convergence of the IMEX-SGS iteration deteriorates significantly as $\Kn$ decreases toward the continuum regime, leading to a substantial increase in the total number of iterations. However, the variation in wall-clock time is much less pronounced due to the use of lower-order moment systems for smaller $\Kn$.

As expected, both FIM-3 and FIM-4 improve the computational efficiency over the IMEX-SGS iteration, with FIM-4 consistently delivering the best performance. The values of $\gamma_2$ listed in \cref{tab:couette-1st} have been shown in \cite{Li2026} to provide good performance for the corresponding Knudsen numbers. The increase in the preferred value of $\gamma_2$ as the Knudsen number decreases is consistent with the Fourier analysis in \cref{sec:fim3-convergence-analysis}. The convergence histories in \cref{fig:couette-his-1st} further show that increasing $\gamma_2$ can effectively accelerate the convergence of FIM-3, although the additional improvement becomes limited for larger values.
Meanwhile, inappropriate choices of $\gamma_2$ and the CFL number may introduce minor oscillations in the residual decay, as also reported in \cite{Li2026}. Therefore, selecting $\gamma_2$ requires a compromise between convergence efficiency and robustness, which relies on empirical tuning.
In contrast, FIM-4 replaces repeated hydrodynamic iterations with one Newton-multigrid step in each alternating iteration, thereby avoiding the tuning of $\gamma_2$. Even when compared with the carefully tuned FIM-3, FIM-4 further reduces the wall-clock time by factors ranging from approximately $1.4$ to $4.0$. The convergence histories in \cref{fig:couette-his-1st} demonstrate that FIM-4 achieves a faster and more stable residual decay with the lowest computational cost. These results indicate that one Newton-multigrid step is sufficient to provide an effective hydrodynamic correction within the FIM framework.

\begin{table}[!htb]
\centering
\caption{Performance results of the IMEX-SGS, FIM-3 and FIM-4 for the Couette flow
	at various $\Kn$ with $N_x = 256$ and first-order discretization.}
\label{tab:couette-1st}
\begin{tabular}{ccccccccccc}
\toprule
\multirow{2}{*}{$\Kn$} & \multirow{2}{*}{$M$} & \multicolumn{2}{c}{IMEX-SGS} &
\multicolumn{2}{c}{FIM-3} & \multicolumn{2}{c}{FIM-3\textsuperscript{$\dagger$}} & \multicolumn{2}{c}
{FIM-4} & \multirow{2}{*}{Speedup \textsuperscript{$\ddagger$}} \\
\cmidrule(lr){3-4} \cmidrule(lr){5-6} \cmidrule(lr){7-8} \cmidrule(lr){9-10}
	&  & Steps & Time (s) & Steps & Time (s) & Steps &
	Time (s) & Steps & Time (s) \\
\midrule
0.1	& 10 & 7327 & 1165.12 & 3633 & 537.74 & 664 & 162.38 & 632 & 107.46 & 5.0 / 1.5 \\
		& 9  & 6923	& 839.29	& 3451 &	392.83 &	643 & 118.45 &	629 &	83.91 & 4.7 / 1.4 \\
0.01 	& 8 & 20038 & 1875.70 & 10161 & 983.36 & 95 & 28.27 & 87 & 9.13 & 107 / 3.1 \\		
		& 7 & 18796 & 1266.87 & 9557 & 666.95 & 88 & 16.11 & 81 & 6.42 &  103 / 2.5 \\
0.001 & 6 & 36192 & 1667.87 & 18364 & 875.84 & 52 & 15.63 & 77 & 4.51 & 194 / 3.4 \\		
		& 5 & 34729 & 1081.79 & 17679 & 579.94 & 51 & 13.72 & 78 & 3.42 & 169 / 4.0\\
\bottomrule
\end{tabular}
\smallskip
\begin{tabular}{@{}p{\linewidth}@{}}
  \textsuperscript{$\dagger$} FIM-3 uses $\gamma_2=1$,
  whereas FIM-3\textsuperscript{$\dagger$} uses $\gamma_2=40$, $400$,
  and $800$ for $\Kn=0.1$, $0.01$, and $0.001$, respectively. \\
  \textsuperscript{$\ddagger$} The speedups are the acceleration ratios of
  FIM-4 relative to FIM-3 with $\gamma_2=1$ and with increased $\gamma_2$, respectively.
\end{tabular}
\end{table}

\begin{figure}[!htb]
	 \centering
	 \subfloat[$\Kn=0.1$]{
		 \label{fig:couette-his-1st-kn-pt1}
		 \includegraphics[width=0.3\textwidth]{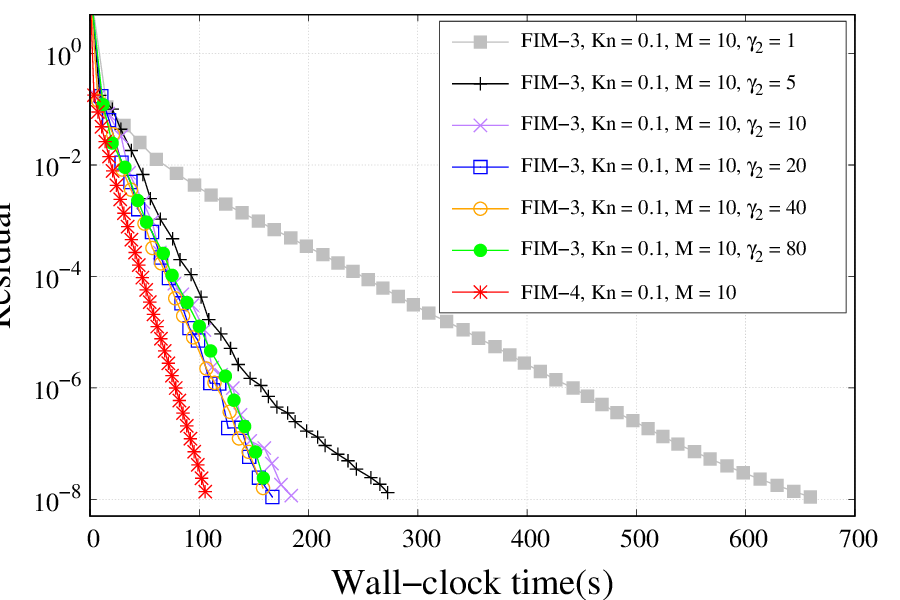}
	 }\hfill
	 \subfloat[$\Kn=0.01$]{
	    \label{fig:couette-his-1st-kn-pt01}
	    \includegraphics[width=0.3\textwidth]{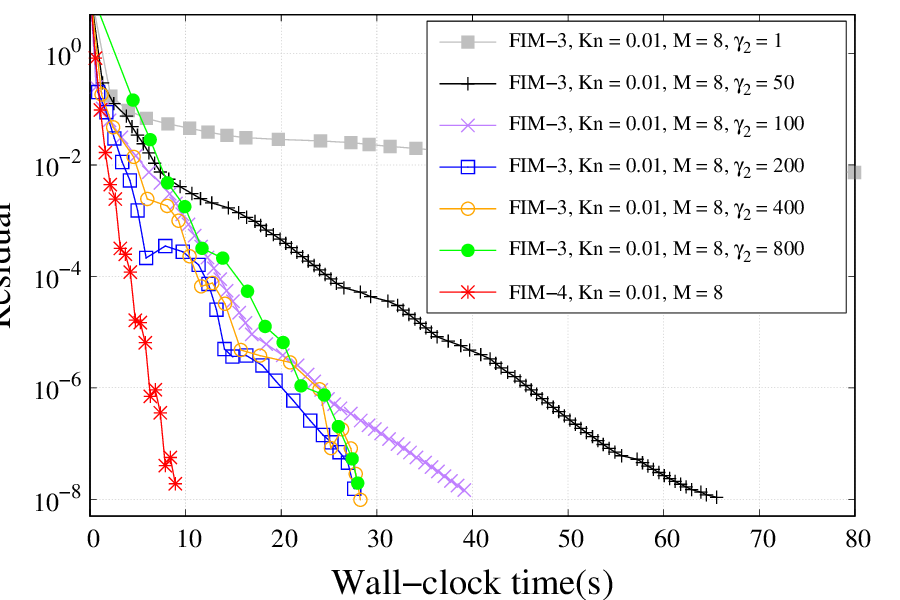}
	 }\hfill
	 \subfloat[$\Kn=0.001$]{
	    \label{fig:couette-his-1st-kn-pt001}
	    \includegraphics[width=0.3\textwidth]{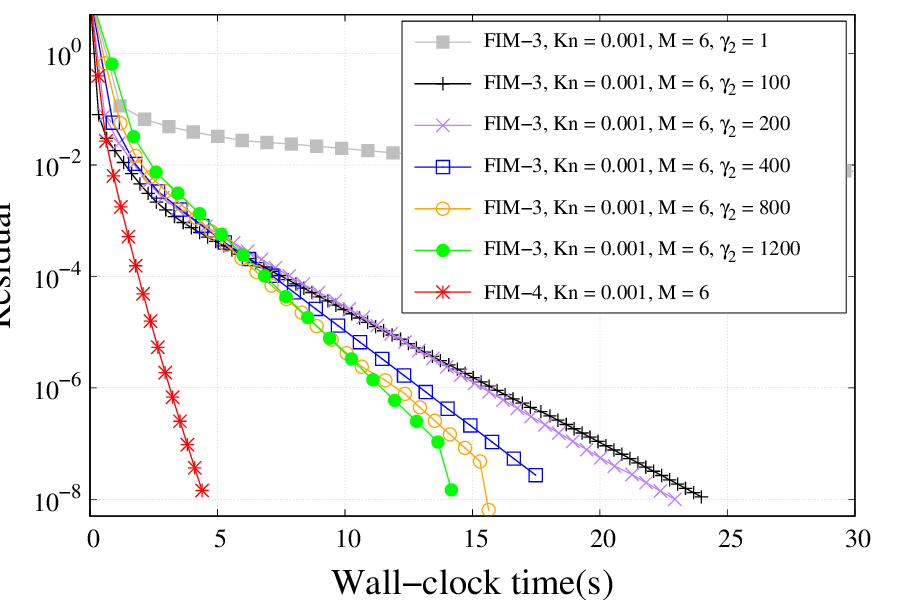}
	 }
	 \caption{Convergence histories for the Couette flow with $N_x=256$ and
	 first-order spatial discretization.}
	 \label{fig:couette-his-1st}
\end{figure}

\subsubsection{Performance of second-order FIM solvers}
\label{sec:couette-eff-2nd}

Now let us turn to investigating the performance of the second-order FIM solvers. 
To achieve a more stable residual decay, the parameters $\tau$ in \cref{eq:Newton-update} and $\eta$ in \cref{eq:regularize} are adjusted to $0.5$ and $10$, respectively, for FIM-4 at $\Kn=0.1$, while the default settings are retained for smaller Knudsen numbers.
The total number of iterations and the wall-clock time of the IMEX-SGS iteration, FIM-3 and FIM-4 on the grid with $N_x = 256$ are presented in \cref{tab:couette-2nd}. 
Compared with the first-order results in \cref{tab:couette-1st}, all three solvers converge more slowly with second-order discretization. This effect is particularly pronounced for the IMEX-SGS iteration at small $\Kn$, resulting in a substantial increase in both iteration numbers and wall-clock time.
Nevertheless, even with $\gamma_2=1$, FIM-3 still provides noticeable acceleration, with both the iteration numbers and wall-clock time reduced by roughly half compared with the IMEX-SGS iteration.

\begin{table}[!htb]
	\centering
	\caption{Performance results of the IMEX-SGS, FIM-3 and FIM-4 for the Couette flow
		at various $\Kn$ with $N_x = 256$ and second-order discretization.}
	\label{tab:couette-2nd}
	\begin{tabular}{ccccccccccc}
		\toprule
		\multirow{2}{*}{$\Kn$} & \multirow{2}{*}{$M$} & \multicolumn{2}{c}{IMEX-SGS} &
		\multicolumn{2}{c}{FIM-3} & \multicolumn{2}{c}{FIM-3\textsuperscript{$\dagger$}} & \multicolumn{2}{c}
		{FIM-4} & \multirow{2}{*}{Speedup \textsuperscript{$\ddagger$}} \\
		\cmidrule(lr){3-4} \cmidrule(lr){5-6} \cmidrule(lr){7-8} \cmidrule(lr){9-10}
		&  & Steps & Time (s) & Steps & Time (s) & Steps &
		Time (s) & Steps & Time (s) \\
		\midrule
		0.1	& 10 & 9347 & 1591.86 & 4662 & 794.52 & 1668 & 307.02 & 1148 & 218.26 & 3.6 / 1.4 \\
		& 9  & 8773 & 1121.80 & 4445 & 576.34 & 1399 & 198.28 & 1038 & 153.13 & 3.7 / 1.3 \\ 
		0.01 	& 8 & 38003 & 3799.43 & 19318 & 1970.32 & 1367 & 168.86 & 179 & 21.91 & 90 / 7.7 \\ 		
		& 7 & 34630 & 2415.83 & 17663 & 1258.12 & 1282 & 119.51 & 161 & 14.81 & 85 / 8.0 \\ 
		0.001 & 6 & 2.4e5 & 11759.7 & 1.2e5 & 6174.50 & 5037 & 428.34 & 364 & 25.08 & 246 / 17.1 \\ 		
		& 5 & 2.1e5 & 6830.1 & 1.0e5 & 3573.08 & 4455 & 300.04 & 328 & 17.14 & 208 / 17.5 \\
		\bottomrule
	\end{tabular}
	\smallskip
	\begin{tabular}{@{}p{\linewidth}@{}}
          \textsuperscript{$\dagger$} FIM-3 uses $\gamma_2=1$,
          whereas FIM-3\textsuperscript{$\dagger$} uses $\gamma_2=15$, $30$,
          and $50$ for $\Kn=0.1$, $0.01$, and $0.001$, respectively. \\
          \textsuperscript{$\ddagger$} The speedups are the acceleration ratios of
          FIM-4 relative to FIM-3 with $\gamma_2=1$ and with increased $\gamma_2$, respectively.
	\end{tabular}
\end{table}

As in the first-order case, the performance of FIM-3 can be further improved by increasing $\gamma_2$. However, the appropriate choice of $\gamma_2$ varies considerably with the Knudsen number, and the values reported in \cref{tab:couette-2nd} are selected empirically for the corresponding cases. An excessively large $\gamma_2$ may even lead to loss of convergence, as shown in \cref{fig:couette-his-2nd-kn-pt001} for $\gamma_2=80$ at $\Kn=0.001$ and $M=6$.
Although convergence can sometimes be recovered by reducing the CFL number and retuning $\gamma_2$, such case-dependent adjustments limit the robustness and practical applicability of FIM-3. In contrast, FIM-4 employs one Newton-multigrid step in each alternating iteration ($\gamma_2=1$) and avoids the parameter tuning required by FIM-3. Even when compared with the carefully tuned FIM-3, FIM-4 achieves substantial speedups, ranging from approximately $1.4$ at $\Kn=0.1$ to more than $17$ at $\Kn=0.001$. The advantage of the Newton-multigrid strategy becomes more pronounced for the second-order discretization than in the first-order case, demonstrating its effectiveness for high-resolution computations.

\begin{figure}[!htb]
	\centering
	\subfloat[$\Kn=0.1$]{
		\label{fig:couette-his-2nd-kn-pt1}
		\includegraphics[width=0.3\textwidth]{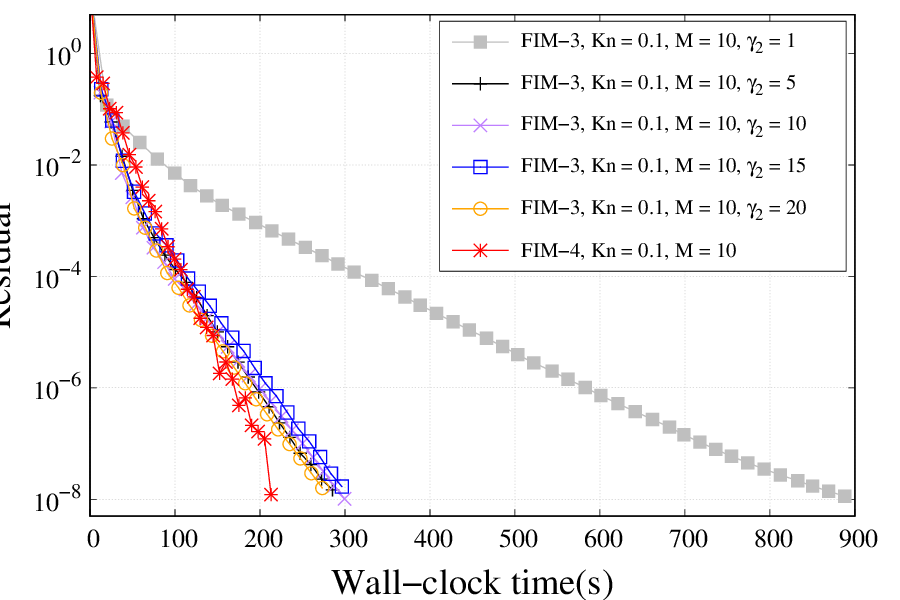}
	}\hfill
	\subfloat[$\Kn=0.01$]{
		\label{fig:couette-his-2nd-kn-pt01}
		\includegraphics[width=0.3\textwidth]{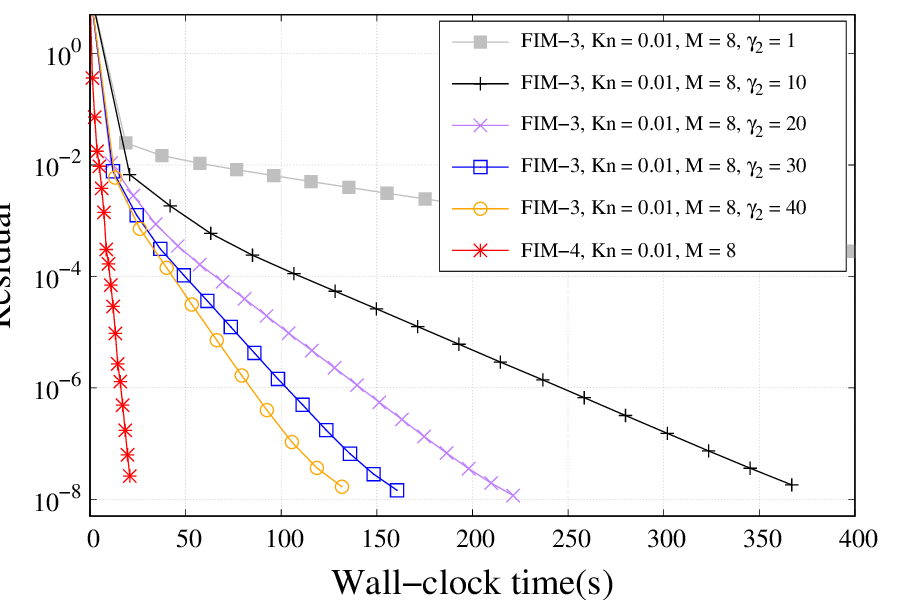}
	}\hfill
	\subfloat[$\Kn=0.001$]{
		\label{fig:couette-his-2nd-kn-pt001}
		\includegraphics[width=0.3\textwidth]{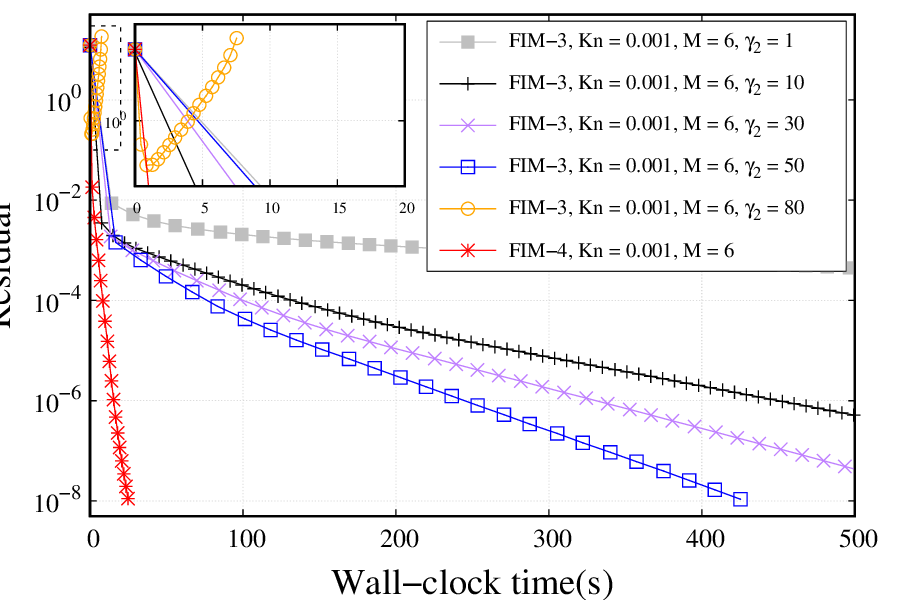}
	}
	\caption{Convergence histories for the Couette flow with $N_x=256$ and
		second-order spatial discretization.}
	\label{fig:couette-his-2nd}
\end{figure}

In addition, the total number of iterations of FIM-4 exhibits a slight rebound at $\Kn=0.001$ in \cref{tab:couette-2nd}. A supplemental test at $\Kn=0.001$ and $M=5$ shows that increasing $\gamma_2$ from $1$ to $2$ reduces the iteration number from $328$ to $175$, with the wall-clock time decreasing to about \qty{11}{\second}. Nevertheless, fixing $\gamma_2=1$ generally provides good convergence without requiring case-dependent parameter tuning.

The convergence histories of FIM-3 and FIM-4 in terms of wall-clock time are shown in \cref{fig:couette-his-2nd}. FIM-3 generally exhibits stable residual decay, except for the case with $\gamma_2=80$ at $\Kn=0.001$ and $M=6$, whereas FIM-4 maintains stable convergence in all cases shown. Similar to the first-order results, FIM-4 achieves the best overall performance, indicating that one Newton-multigrid step is sufficient to provide an effective hydrodynamic correction even with the second-order spatial discretization.

\subsubsection{Effect of the new regularization strategy}
	\label{sec:new-regularization-strategy}
	To investigate the effectiveness of the proposed regularization strategy defined in \cref{eq:regularize}, FIM-4 using the proposed $\Kn$-based regularization is compared with that using the original residual-based regularization for the Couette flow with $N_x=256$. 
	Since the two regularization strategies exhibit nearly identical performance at $\Kn=0.01$ and $0.001$, only the results at $\Kn=0.1$ are reported in \cref{tab:regular-couette}.

        As shown in \cref{tab:regular-couette}, the proposed $\Kn$-based regularization has little influence on the first-order discretization, indicating that the original residual-based regularization is already sufficient. For the second-order discretization at $\Kn=0.1$, it reduces the iteration number from $2949$ to $1148$ for $M=10$ and from $2601$ to $1038$ for $M=9$, corresponding to a reduction of more than $60\%$. The convergence histories in \cref{fig:rescompare} further show that the proposed $\Kn$-based regularization effectively suppresses the oscillations observed with the residual-based regularization strategy and achieves faster convergence. 
	These results suggest that the $\Kn$-dependent term provides additional stabilization for the Newton system under second-order discretization. Therefore, the proposed $\Kn$-based regularization is adopted in all  FIM-4 computations unless otherwise stated.

        \begin{table}[!htb]
		\centering
		\caption{Comparison of iteration numbers required by FIM-4 using different regularizations for the Couette flow at $\Kn=0.1$ with $N_x=256$.}
		\label{tab:regular-couette}
		\begin{tabular}{ccccc}
			\toprule
			\multirow{2}{*}{$M$} &
			\multicolumn{2}{c}{First-order} &
			\multicolumn{2}{c}{Second-order} \\
			\cmidrule(lr){2-3}\cmidrule(lr){4-5}
			& Residual-based & $\Kn$-based
			& Residual-based & $\Kn$-based \\
			\midrule
			10 & 653 & 632 & 2949 & 1148 \\
			9  & 647 & 629 & 2601 & 1038 \\
			\bottomrule
		\end{tabular}
	\end{table}
		
	\begin{figure}[!htb]
		\centering
		\includegraphics[width=0.45\linewidth]{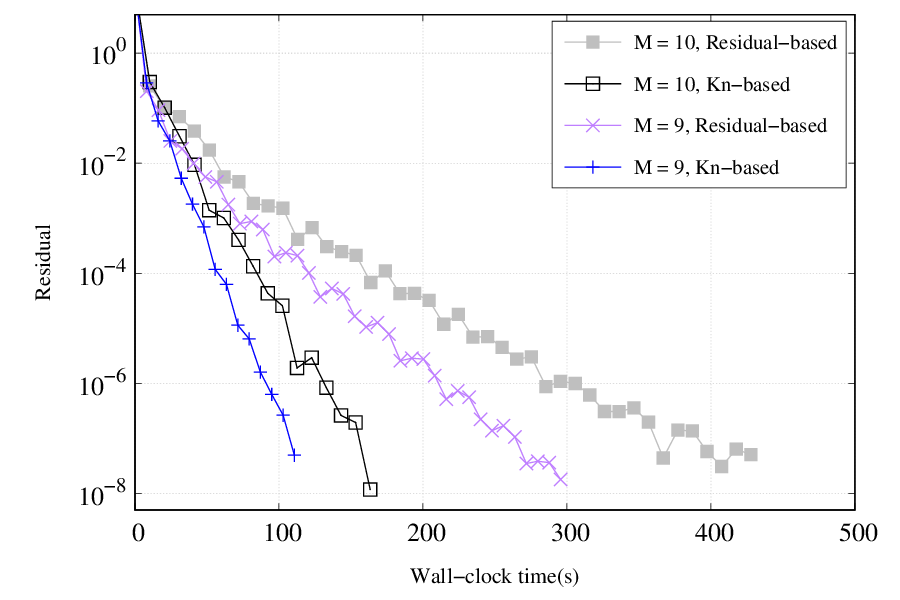}
		\caption{Convergence histories of FIM-4 using different regularizations for the Couette flow at $\Kn=0.1$ with $N_x=256$ and second-order discretization.}
		\label{fig:rescompare}
	\end{figure}

\subsection{Lid-driven cavity flow}
\label{sec:cavity}

The lid-driven cavity flow is one of the most frequently used benchmark tests in the multi-dimensional case. Our setting is the same as in \cite{Cai2014reduced, Cai2018linear, Hu2023}. The flow domain is a square cavity with side length $L = \qty{1}{\meter}$. The top boundary (the lid) of the cavity moves to the right at a constant speed $u^{W} = \qty{50}{\meter / \second}$, while the other walls are fixed. All the solid walls are maintained at a uniform reference temperature of $T_{\text{ref}} = \qty{273}{\kelvin}$.
Initially, the argon gas with a molecular mass of $m_{*}= \qty{6.63e-26}{\kg}$ is uniformly distributed and assumed to be in a Maxwellian equilibrium state with constant density $\rho^0$, zero mean velocity $\bu^0=\mathbf{0}$, and temperature $T^0=T_{\mathrm{ref}}$. 
Three initial densities, namely $\rho^{0} = \qty{8.58 e-7} {\kg / \metre^3}$, $\qty{8.58e-6}{\kg / \metre^3}$ and $\qty{8.58e-5}{\kg / \metre^3}$, corresponding to Knudsen numbers $\Kn = 0.1$, $0.01$ and $0.001$, respectively, are considered below.
Driven by the motion of the top lid, the gas would finally reach a steady state. 

\subsubsection{Solution validation and spatial DOF comparison}
\label{sec:cavity-sol}
Numerical solutions computed by the second-order FIM solver on the $64 \times 64$ grid are presented in \cref{fig:cavity_solutions}, including temperature contours and heat-flux streamlines at $\Kn=0.1$, as well as velocity streamlines and normalized velocity profiles along the centerlines at both $\Kn=0.01$ and $0.001$. 
Again, the DUGKS solutions obtained by the dugksFoam solver \cite{Zhu2017foam} are used as references, with a $60\times60$ physical grid employed for all Knudsen numbers and $81\times81$ discrete velocity points for $\Kn=0.1$ and $28\times28$ discrete velocity points for $\Kn=0.01$ and $0.001$.
It can be seen that our results agree well with DUGKS results, validating the applicability of the moment system with moderate orders in the near-continuum flow regime.

\begin{figure}[!htb]
	\centering
	\subfloat[Temperature $(\si{\kelvin})$, $M=10$]{
		\label{fig:cavity_pt1_T}
		\includegraphics[width=0.3\textwidth]{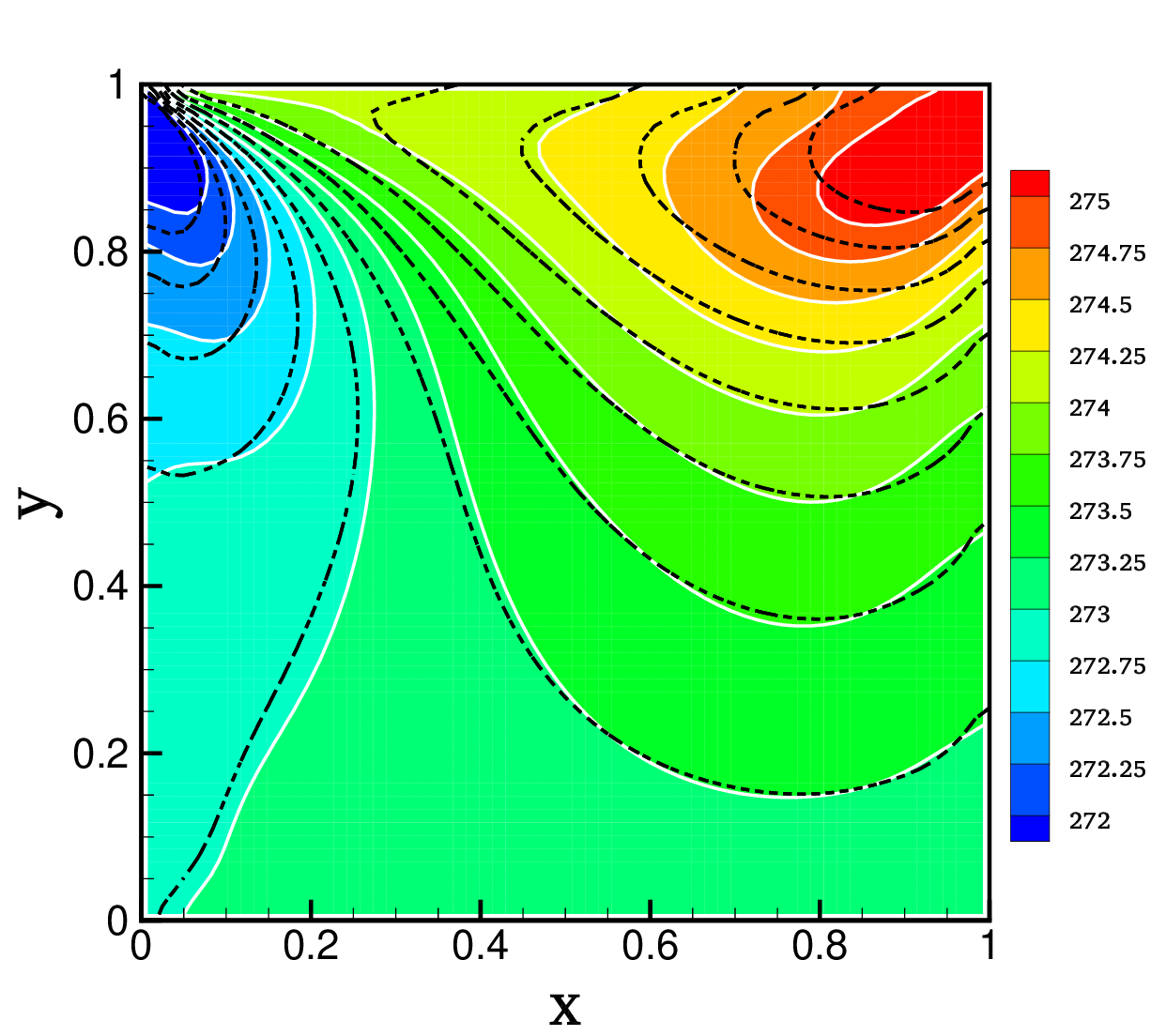}
	}\hfill
	\subfloat[Velocity streamlines, $M = 8$]{
		\label{fig:velocity_streamlines_pt01}
		\includegraphics[width=0.3\textwidth]{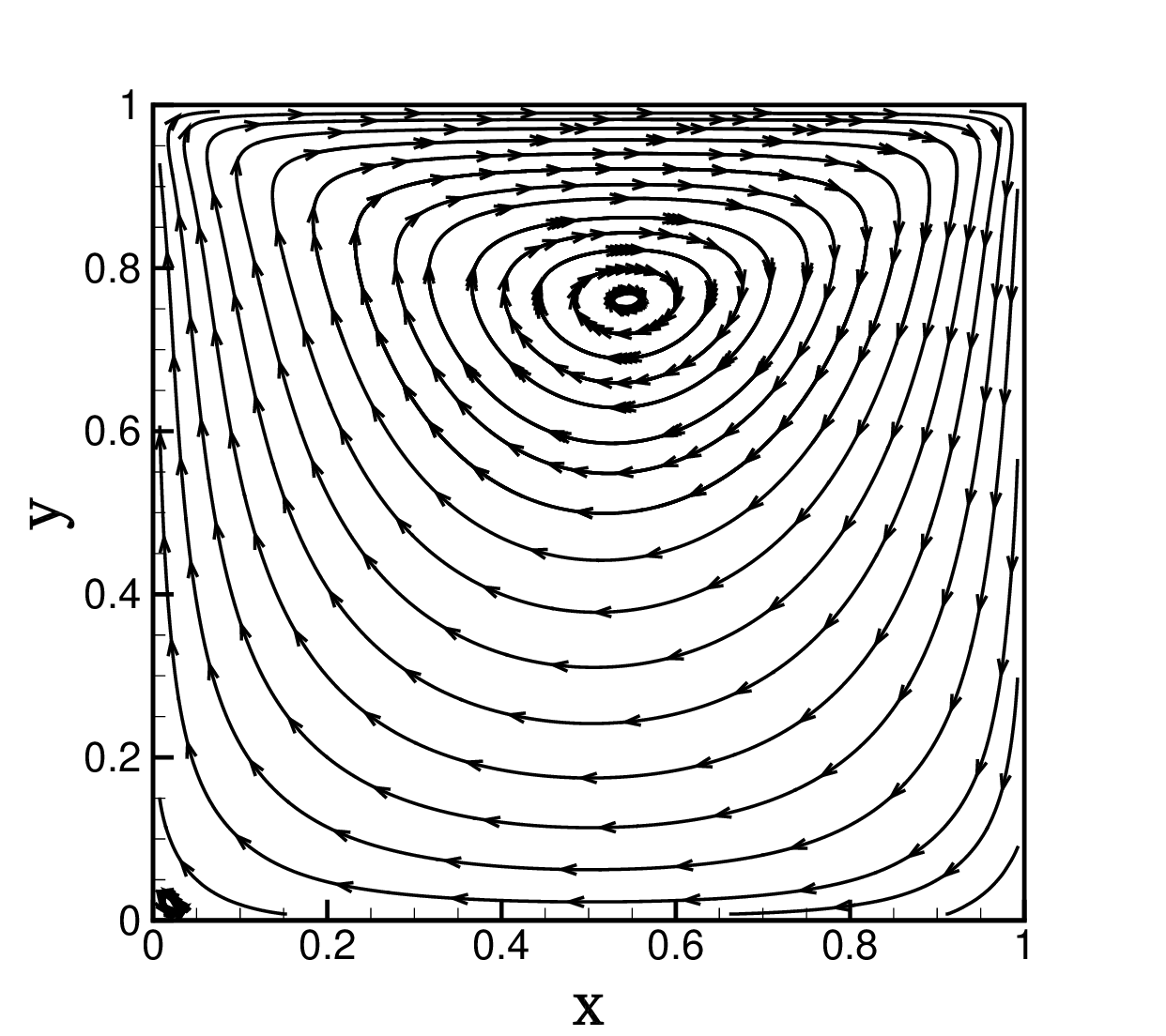}
	}\hfill
	\subfloat[Velocity streamlines, $M = 6$]{
		\label{fig:velocity_streamlines_pt001}
		\includegraphics[width=0.3\textwidth]{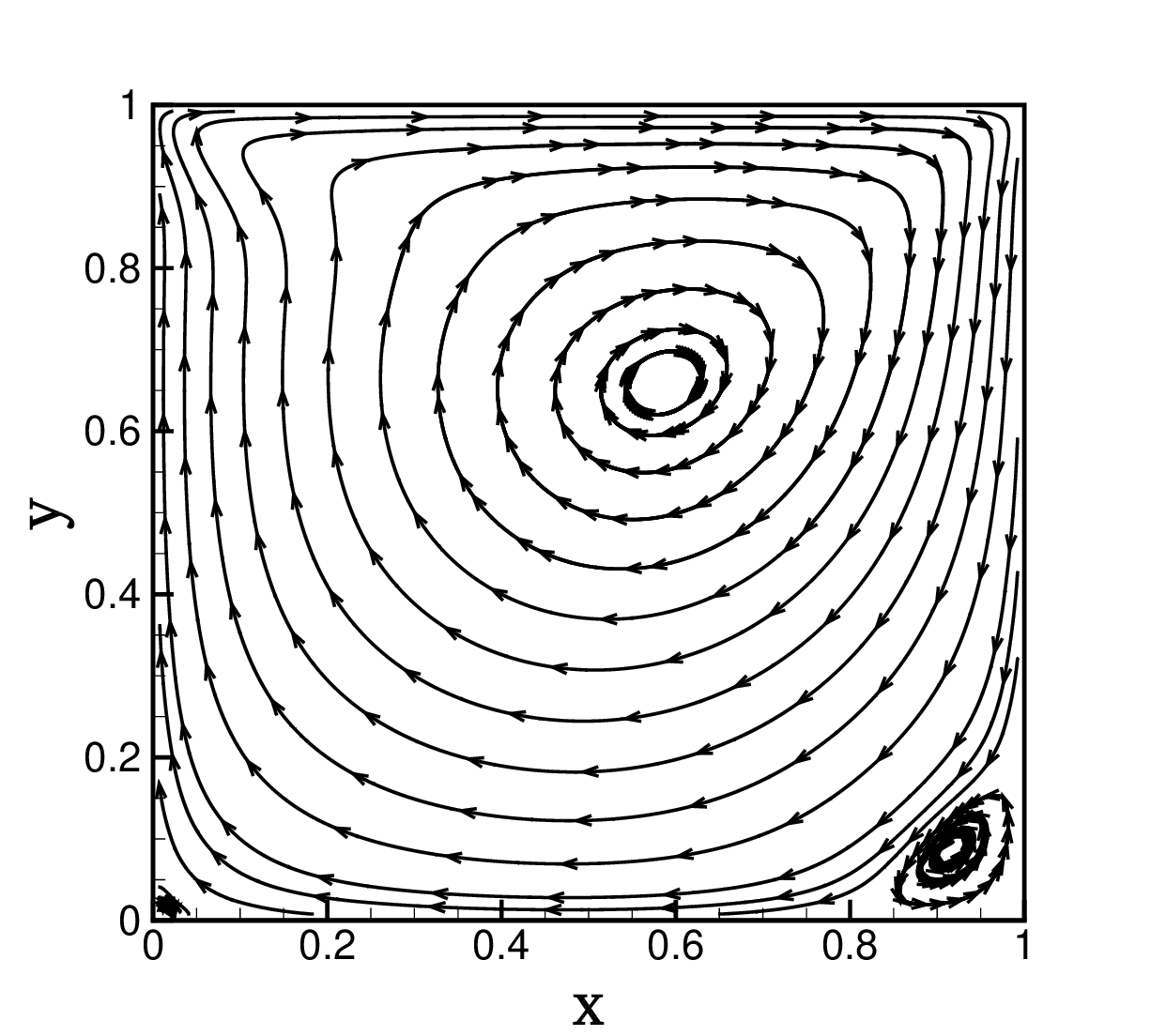}
	}\\
	\subfloat[Heat flux streamlines, $M=10$]{
		\label{fig:cavity_pt1_q}
		\includegraphics[width=0.3\textwidth]{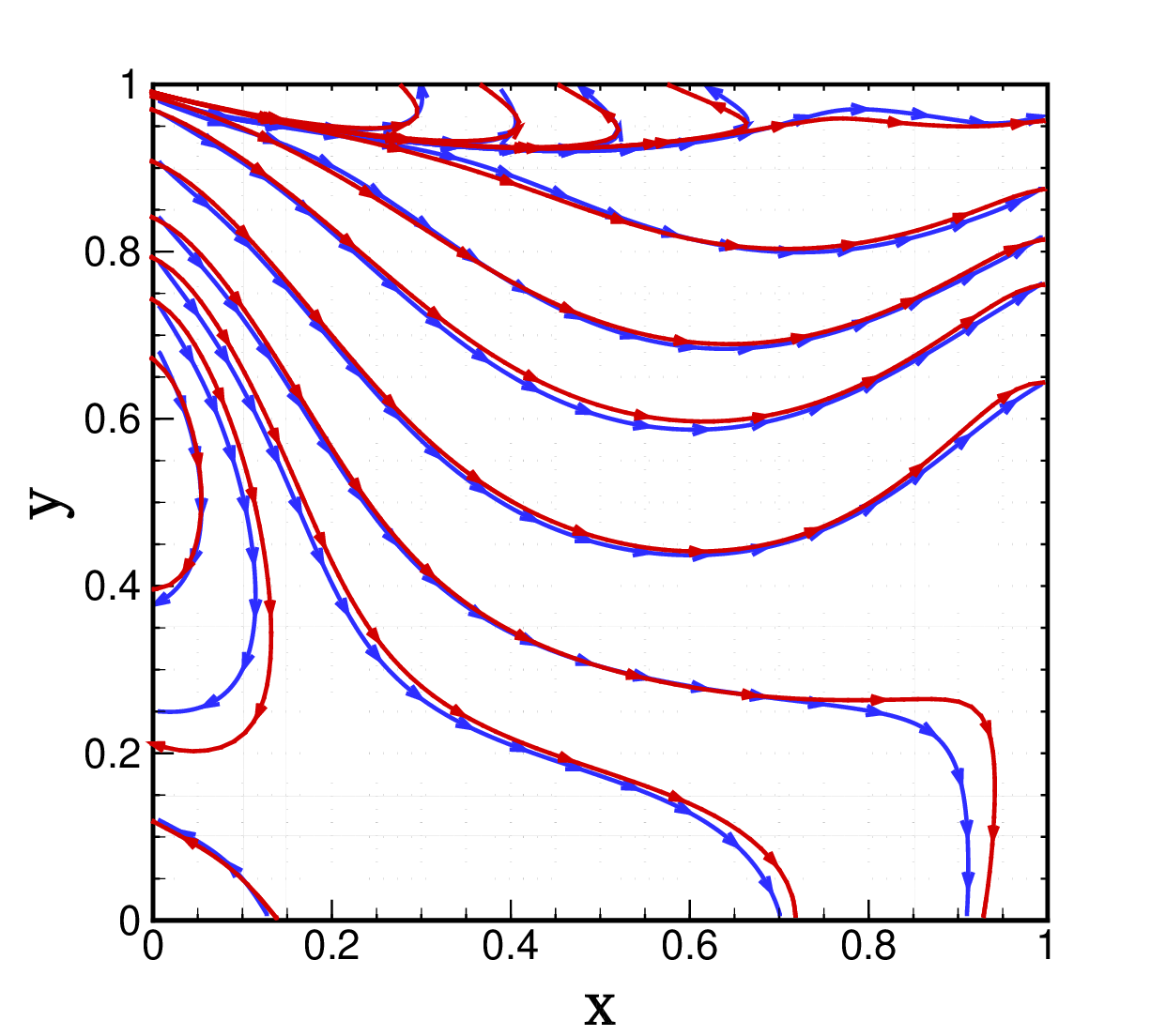}
	}\hfill
	\subfloat[$u_1$, $u_2$ along centerlines]{
		\label{fig:slice_u_v_pt01}
		\includegraphics[width=0.3\textwidth]{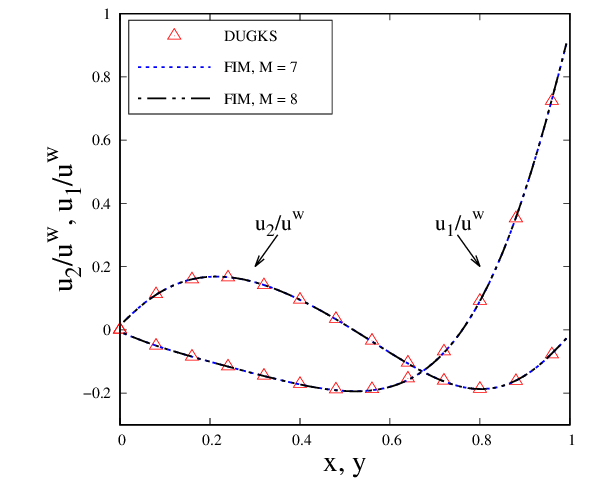}
	}\hfill
	\subfloat[$u_1$, $u_2$ along centerlines]{
		\label{fig:slice_u_v_pt001}
		\includegraphics[width=0.3\textwidth]{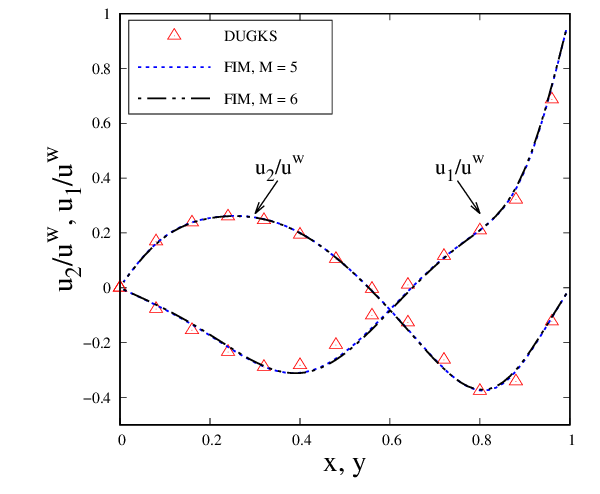}
	}
	\caption{Numerical solutions of the lid-driven cavity flow with $N_x \times
		N_y = 64 \times 64$. \textit{Left column}: results at $\Kn=0.1$, with dashed and
		red lines representing DUGKS solutions; \textit{Middle column}: velocity
		streamlines and normalized velocity profiles ($u_1$ along the vertical
		centerline, $u_2$ along the horizontal centerline) at $\Kn=0.01$;
		\textit{Right column}: corresponding velocity results at $\Kn=0.001$.}
	\label{fig:cavity_solutions}
\end{figure}

The same flow configuration was also investigated in \cite{Li2026}, where it was shown that the first-order FIM solver generally requires much finer grids to obtain solutions comparable with the reference results, especially in the near-continuum flow regime. 
At $\Kn=0.001$, 
even with a grid of $512\times512$ cells, the first-order discretization still shows noticeable discrepancies in the size, location, and shape of the primary and secondary vortices compared with the second-order results, as shown in \cref{fig:velocity_streamlines_pt001,fig:cavity_pt001_vel_1st}. To further examine the spatial convergence behavior, the velocity profiles along four representative slices are compared in \cref{fig:cavity_pt001_vel_sli_pt25,fig:cavity_pt001_vel_sli_pt75}. The second-order FIM solver on the $64\times64$ grid already agrees well with the DUGKS reference solution, whereas the first-order solver still exhibits visible deviations even on the $2048\times2048$ grid. These results demonstrate the substantially improved spatial resolution of the second-order discretization, which significantly reduces the required spatial DOF without sacrificing solution accuracy and thereby further improves the efficiency of the FIM method in the near-continuum flow regime.

\begin{figure}[!htb]
	\centering
	\subfloat[]{
		\label{fig:cavity_pt001_vel_1st}
	   \includegraphics[width=0.3\textwidth]{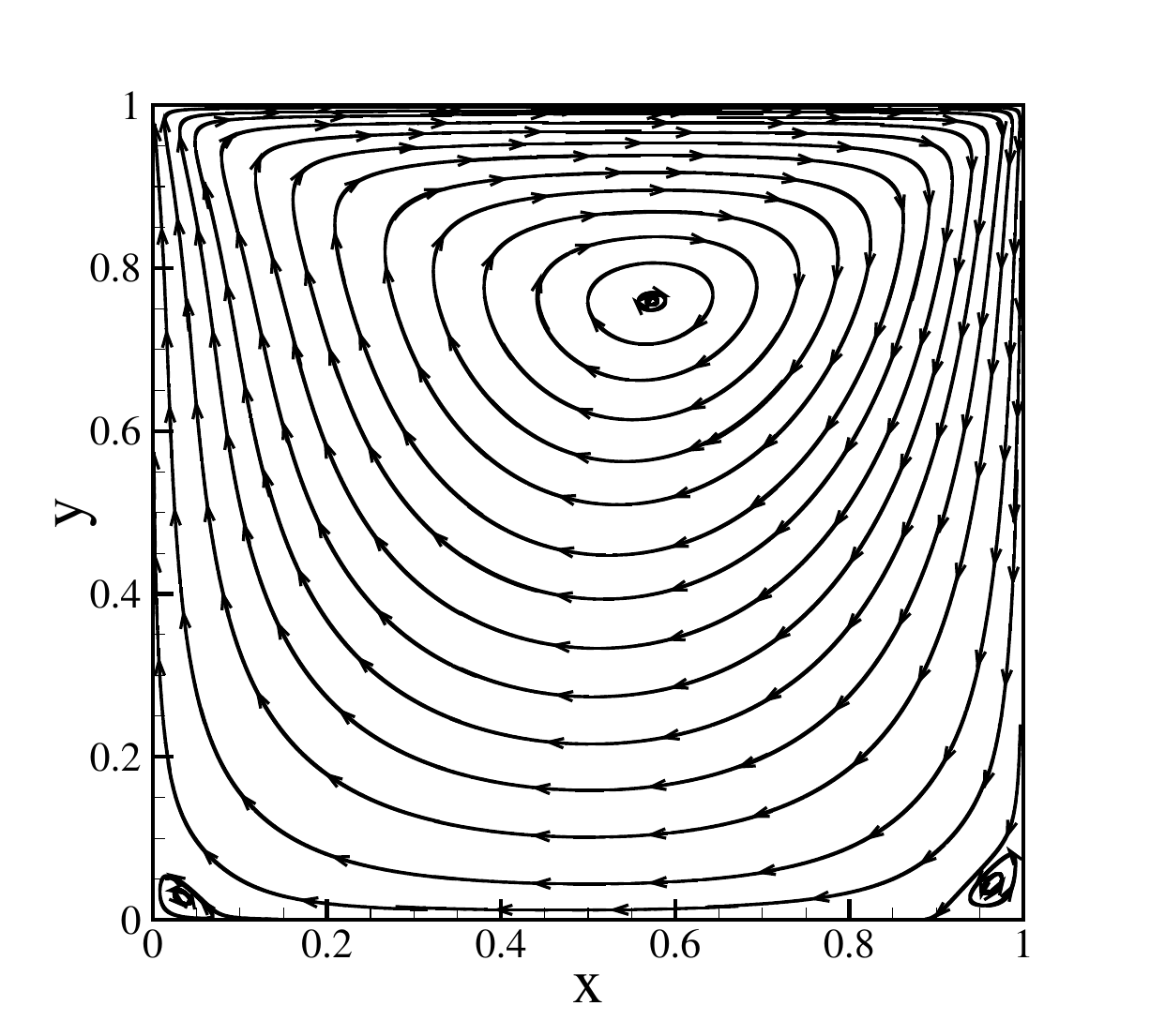}
	}\hfill
	\subfloat[]{
	    \label{fig:cavity_pt001_vel_sli_pt25}
	    \includegraphics[width=0.3\textwidth]{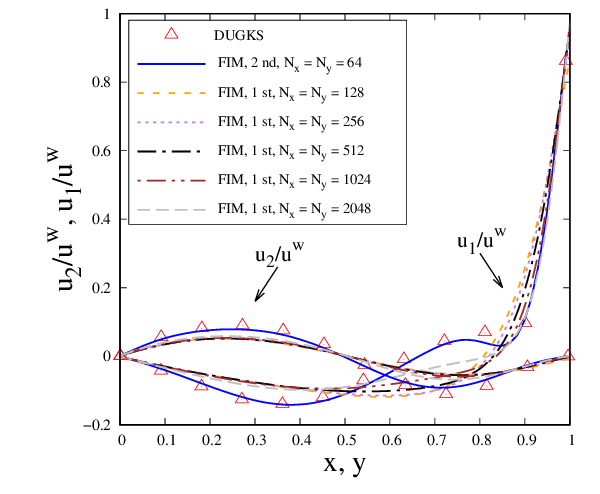}
	}\hfill
	\subfloat[]{
	    \label{fig:cavity_pt001_vel_sli_pt75}
	    \includegraphics[width=0.3\textwidth]{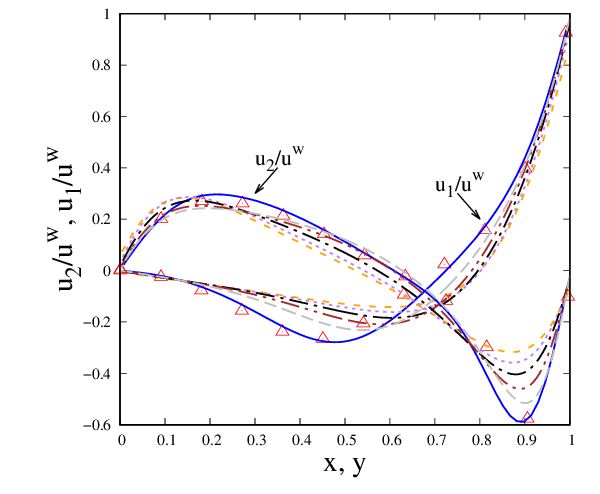}
	}
	\caption{Numerical solutions of the lid-driven cavity flow with $M=6$ at
	$\Kn=0.001$. \textit{Left}: velocity streamlines with $N_x = N_y = 512$ and
	first-order discretization; \textit{Middle}: $u_1$ ($u_2$) along $x=1/4$
	($y=1/4$); \textit{Right}: $u_1$ ($u_2$) along $x=3/4$ ($y=3/4$).}
	\label{fig:cavity-sol-1st}
\end{figure}

\subsubsection{Performance of FIM solvers}
\label{sec:cavity-eff}

The first-order IMEX-SGS iteration, FIM-3, and FIM-4 are tested on the $64\times64$ grid for all three Knudsen numbers, with the results summarized in \cref{tab:cavity-1st}. 
Similar to the Couette-flow results, FIM-3 can be accelerated by increasing $\gamma_2$, and the values of $\gamma_2$ listed in \cref{tab:cavity-1st} are selected empirically for the corresponding Knudsen numbers. 
However, the convergence histories in \cref{fig:cavity-his-1st} show that further increasing $\gamma_2$ provides only limited additional acceleration.
In contrast, FIM-4 achieves fast convergence with one Newton-multigrid step in each alternating iteration, i.e., $\gamma_2=1$, and thus requires no case-dependent parameter tuning. 
Compared with the tuned FIM-3, FIM-4 is slightly slower at $\Kn=0.1$, but becomes substantially more efficient as the Knudsen number decreases. At $\Kn=0.01$, the wall-clock time is reduced by factors of approximately $4$, while a speedup of about $2$ is maintained at $\Kn=0.001$. These results demonstrate that one Newton-multigrid step provides an effective hydrodynamic correction for the multi-dimensional FIM framework, achieving high efficiency and robust performance without empirical tuning of $\gamma_2$. 

\begin{table}[!htb]
	\centering
	\caption{Performance results of the IMEX-SGS, FIM-3 and FIM-4 for the lid-driven
		cavity flow at various $\Kn$ with $N_x = N_y = 64$ and first-order
		discretization.}
	\label{tab:cavity-1st}
	\begin{tabular}{ccccccccc}
		\toprule
		\multirow{2}{*}{$\Kn$} & \multirow{2}{*}{$M$} &
		\multicolumn{2}{c}{IMEX-SGS} &
		\multicolumn{2}{c}{FIM-3\textsuperscript{$\dagger$}} &
		\multicolumn{2}{c}{FIM-4} &
		\multirow{2}{*}{Speedup \textsuperscript{$\ddagger$}} \\
		\cmidrule(lr){3-4} \cmidrule(lr){5-6} \cmidrule(lr){7-8}
		& & Steps & Time (s) &
		Steps & Time (s) &
		Steps & Time (s) & \\
		\midrule
		0.1   & 10 & 2076 & 4173.95 & 274 & 795.75 & 276 & 909.58 & 0.87 \\
		& 9  & 2054 & 3267.26 & 263 & 574.18 & 262 & 670.63 & 0.85 \\
		0.01  & 8  & 2604 & 3114.57 & 74  & 321.13 & 42  & 81.08  & 3.96 \\
		& 7  & 2482 & 2287.67 & 70  & 251.10 & 40  & 56.18  & 4.47\\
		0.001 & 6  & 2731 & 1676.43 & 39  & 100.15 & 47  & 50.62  & 1.98 \\
		& 5  & 2610 & 1224.42 & 38  & 80.12  & 46  & 40.73  & 1.97 \\
		\bottomrule
	\end{tabular}
	\smallskip
	\begin{tabular}{@{}p{\linewidth}@{}}
	\textsuperscript{$\dagger$} The parameter $\gamma_2$ in FIM-3 is set to $10$, $40$, and $80$ for $\Kn = 0.1$, $0.01$, and $0.001$, respectively. \\
	\textsuperscript{$\ddagger$} The speedup is the acceleration ratio of FIM-4 relative to FIM-3.
	\end{tabular}
\end{table}

\begin{figure}[!htb]
	 \centering
	 \subfloat[$\Kn=0.1$]{
	    \includegraphics[width=0.3\textwidth]{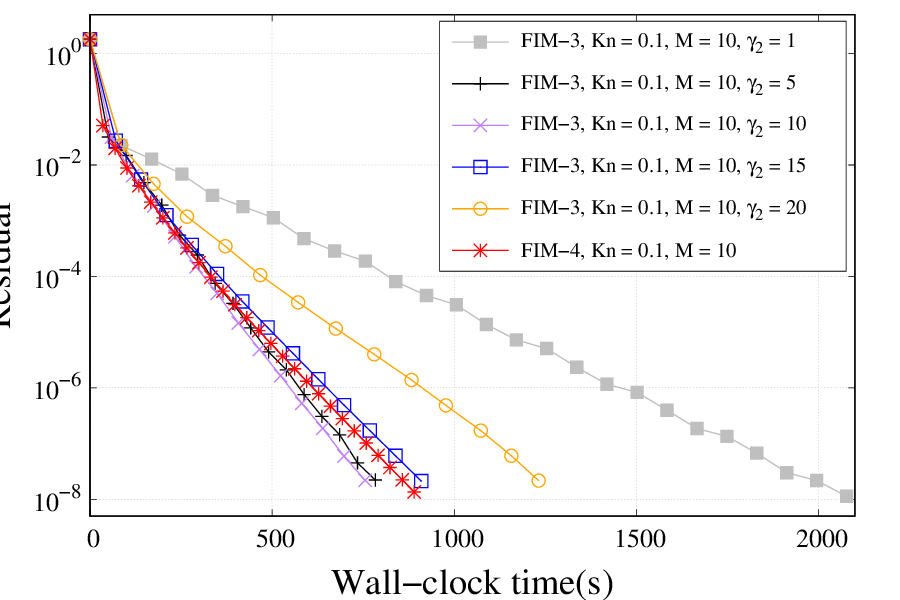}
	 }\hfill
	 \subfloat[$\Kn=0.01$]{
	    \includegraphics[width=0.3\textwidth]{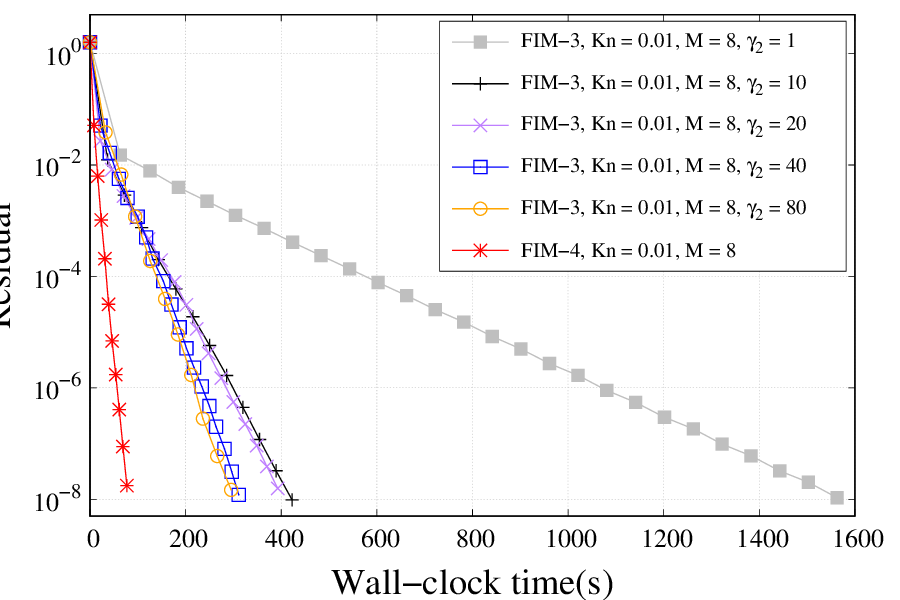}
	 }\hfill
	 \subfloat[$\Kn=0.001$]{
	    \includegraphics[width=0.3\textwidth]{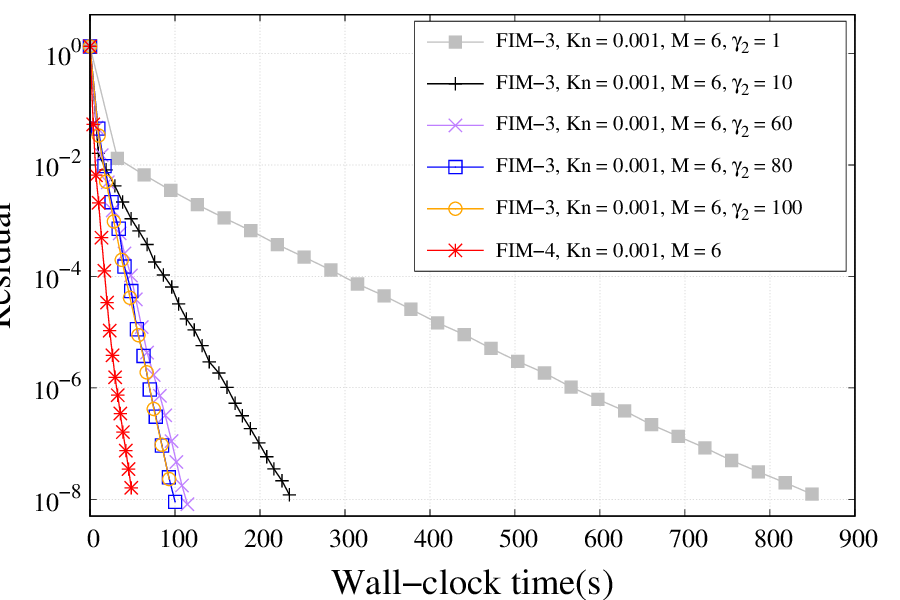}
	 }
	 \caption{Convergence histories for the lid-driven cavity flow with
	 $N_x=N_y=64$ and first-order spatial discretization.}
	 \label{fig:cavity-his-1st}
\end{figure}

The results of the second-order solvers for the lid-driven cavity flow are summarized in \cref{tab:cavity-2nd} and the convergence histories are shown in \cref{fig:cavity-his-2nd}. Possibly due to the wider stencil of the second-order discretization, all solvers generally converge more slowly than in the first-order case, while providing higher spatial accuracy. As in the first-order case, FIM-3 requires empirical tuning of $\gamma_2$ to achieve satisfactory performance, whereas FIM-4 employs one Newton-multigrid step in each alternating iteration. 
Compared with the tuned FIM-3, FIM-4 is substantially more efficient at small Knudsen numbers, achieving speedups exceeding $4$ at $\Kn=0.01$ and about $3$ at $\Kn=0.001$.
At $\Kn=0.1$, FIM-4 is moderately less efficient than the tuned FIM-3, which may be partly related to the higher computational cost associated with the full Jacobian system. This cost could potentially be reduced by simplifying the Jacobian matrix. 
In the problems studied in \cite{Hu2026SWEs}, such a strategy reduced the Jacobian-related computational cost by more than $30\%$ without noticeably affecting the convergence behavior.

Additionally, a rebound in the number of iterations is again observed for FIM-4 with fixed $\gamma_{2}$ at $\Kn=0.001$, which can be effectively suppressed by increasing the value of $\gamma_2$. A supplemental test for $M=5$ shows that increasing $\gamma_2$ from $1$ to $5$ reduces the number of iterations from $416$ to $95$, while the wall-clock time decreases only slightly, from $\qty{455.75}{\second}$ to about $\qty{439}{\second}$.

\begin{table}[!htb]
	\centering
	\caption{Performance results of the IMEX-SGS, FIM-3 and FIM-4 for the lid-driven
		cavity flow at various $\Kn$ with $N_x = N_y = 64$ and second-order
		discretization.}
	\label{tab:cavity-2nd}
	\begin{tabular}{ccccccccc}
		\toprule
		\multirow{2}{*}{$\Kn$} & \multirow{2}{*}{$M$} &
		\multicolumn{2}{c}{IMEX-SGS} &
		\multicolumn{2}{c}{FIM-3\textsuperscript{$\dagger$}} &
		\multicolumn{2}{c}{FIM-4} &
		\multirow{2}{*}{Speedup \textsuperscript{$\ddagger$}} \\
		\cmidrule(lr){3-4} \cmidrule(lr){5-6} \cmidrule(lr){7-8}
		& & Steps & Time (s) &
		Steps & Time (s) &
		Steps & Time (s) & \\
		\midrule
		0.1   & 10 & 3336  & 4435.14 & 524 & 1293.78 & 504 & 1775.9  & 0.73 \\
		& 9  & 3129  & 3217.69 & 499 & 844.08  & 471 & 1250.1  & 0.67 \\
		0.01  & 8  & 8100  & 6186.40 & 258 & 731.21  & 82  & 173.15  & 4.22 \\
		& 7  & 7418  & 4062.74 & 237 & 564.20  & 77  & 122.43  & 4.61 \\
		0.001 & 6  & 41117 & 16244.3 & 734 & 1710.64 & 458 & 580.82  & 2.95 \\
		& 5  & 36324 & 9956.71 & 502 & 1478.64 & 416 & 455.75  & 3.24 \\
		\bottomrule
	\end{tabular}	
	\smallskip
	\begin{tabular}{@{}p{\linewidth}@{}}
		\textsuperscript{$\dagger$} The parameter $\gamma_2$ in FIM-3 is set to $10$,
		$40$, and $80$ for $\Kn = 0.1$, $0.01$, and $0.001$, respectively. \\
		\textsuperscript{$\ddagger$} The speedup is the acceleration ratio of FIM-4 relative to FIM-3.
	\end{tabular}
\end{table}

\begin{figure}[!htb]
	 \centering
	 \subfloat[$\Kn=0.1$]{
		 \includegraphics[width=0.3\textwidth]{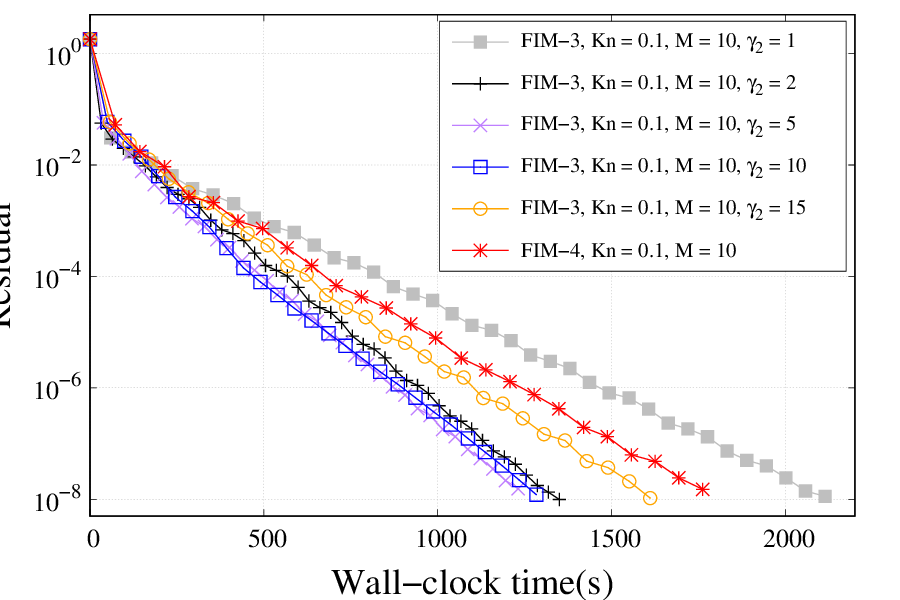}
	 }\hfill
	 \subfloat[$\Kn=0.01$]{
	    \includegraphics[width=0.3\textwidth]{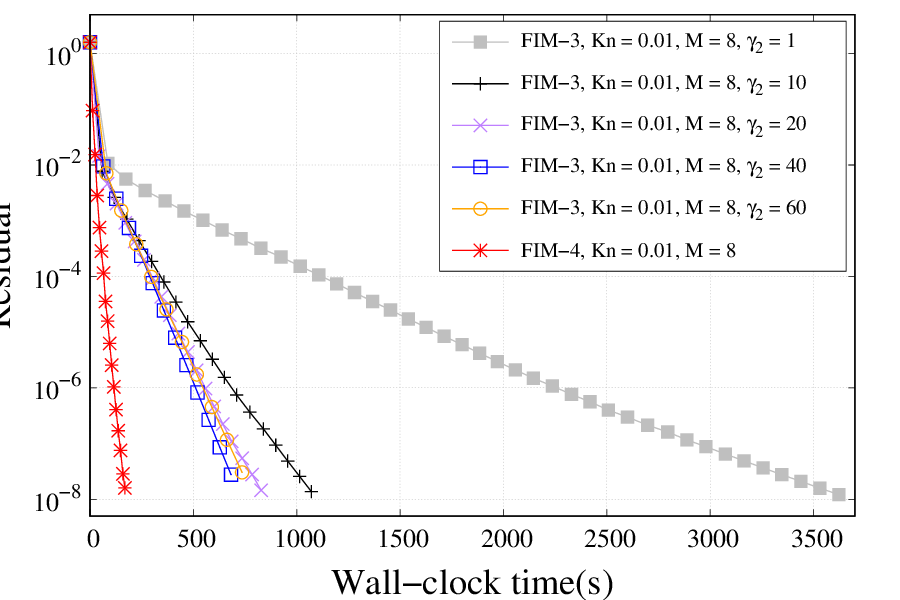}
	 }\hfill
	 \subfloat[$\Kn=0.001$]{
	    \includegraphics[width=0.3\textwidth]{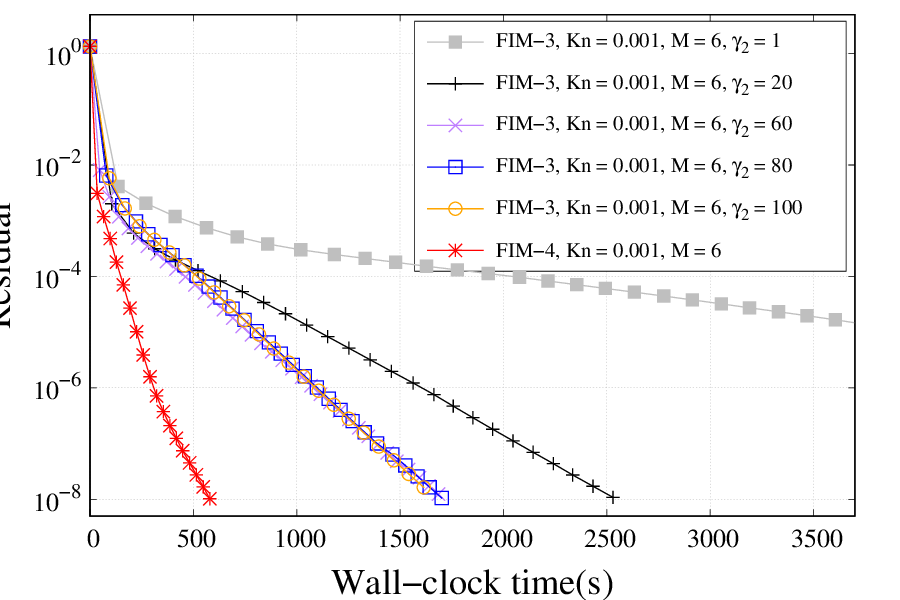}
	 }
	 \caption{Convergence histories for the lid-driven cavity flow with
	 $N_x=N_y=64$ and second-order spatial discretization.}
	 \label{fig:cavity-his-2nd}
\end{figure}

\subsection{Heat transfer in a bottom-heated cavity}
\label{sec:heat_trans}
The third example is a heat transfer problem for the gas confined in a bottom-heated cavity \cite{Cai2018linear, Rana2015, Hu2023}. The computational domain and gas properties are the same as those in the lid-driven cavity problem (\cref{sec:cavity}). The initial conditions are also the same, except that the temperature is $T^{0}=\qty{300}{\kelvin}$. All walls are stationary, with the bottom wall maintained at \qty{600}{\kelvin} and the other three walls at \qty{300}{\kelvin}.

The temperature and heat-flux solutions obtained by the second-order FIM solver on a $64\times64$ grid are shown in \cref{fig:heat_transfer_solutions} and agree well with the DUGKS results generated by the dugksFoam solver. As $\Kn$ increases, the temperature jumps near the bottom-left and bottom-right corners are more obvious due to the discontinuous boundary temperature \cite{Cai2025}.

\begin{figure}[!htb]
	\centering
	\subfloat[$M = 10$, $\Kn = 0.1$]{
	   \includegraphics[width=0.3\textwidth]{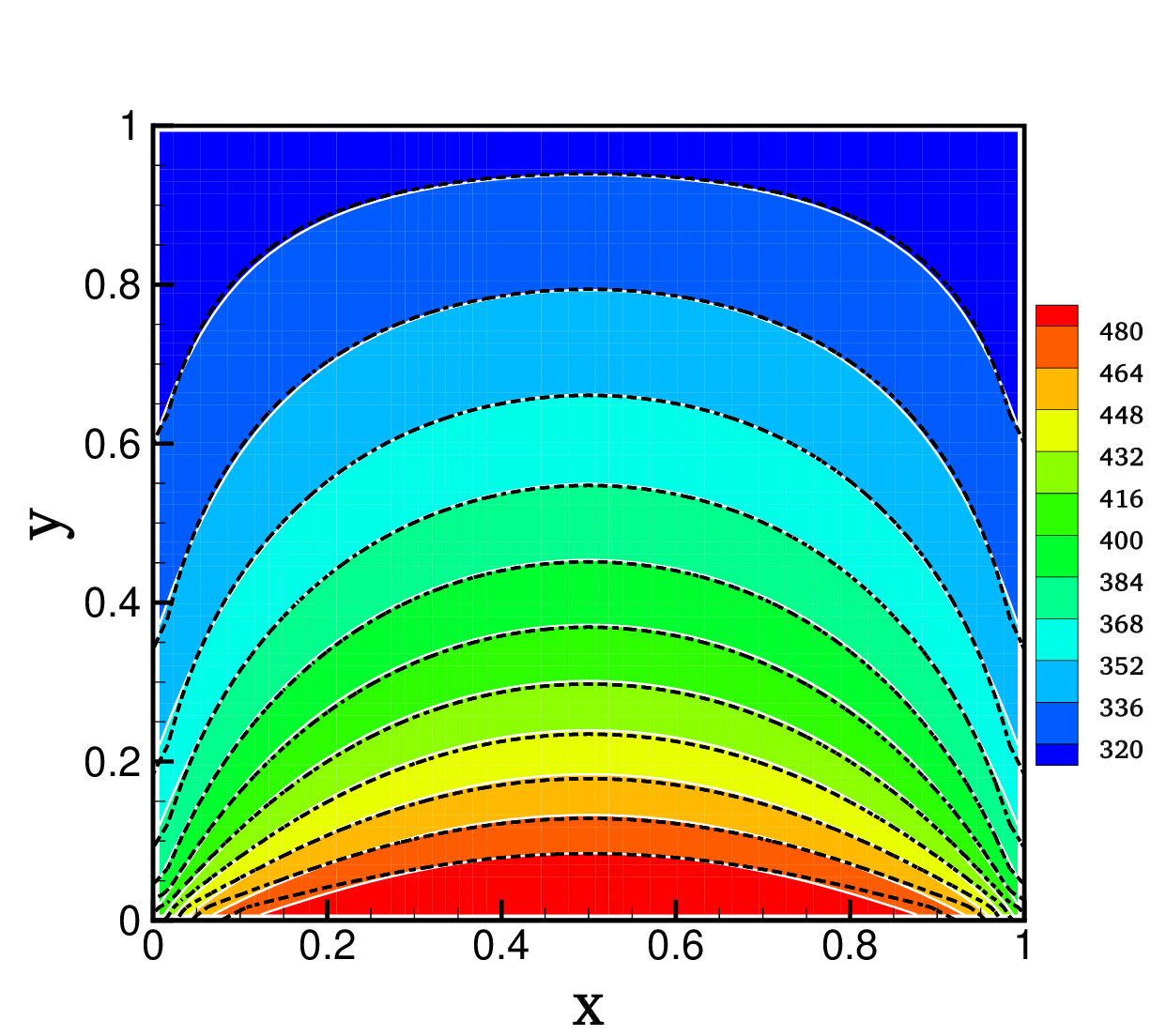}
	}\hfill
	\subfloat[$M = 8$, $\Kn = 0.01$]{
	    \includegraphics[width=0.3\textwidth]{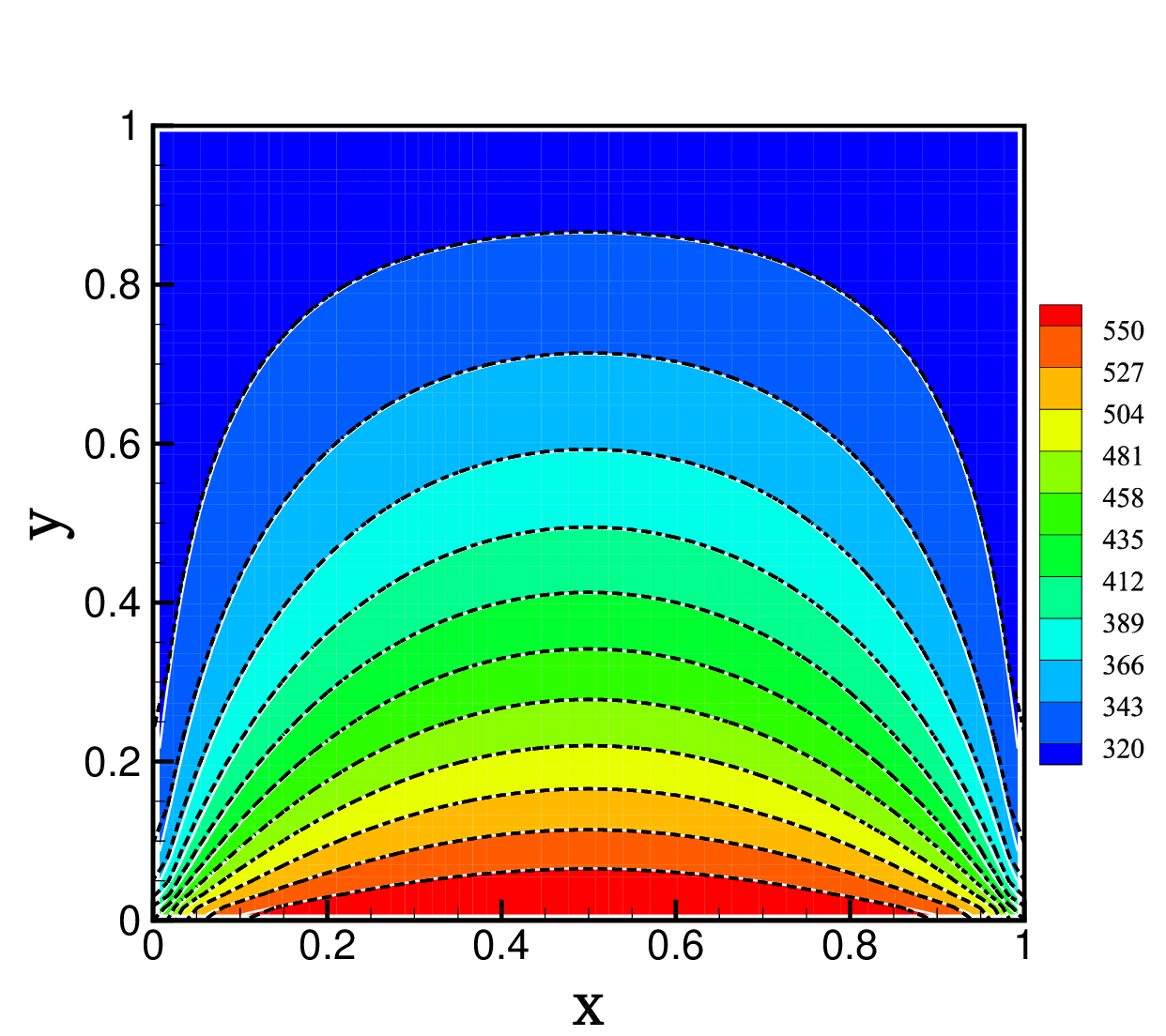}
	}\hfill
	\subfloat[$M = 6$, $\Kn = 0.001$]{
	    \includegraphics[width=0.3\textwidth]{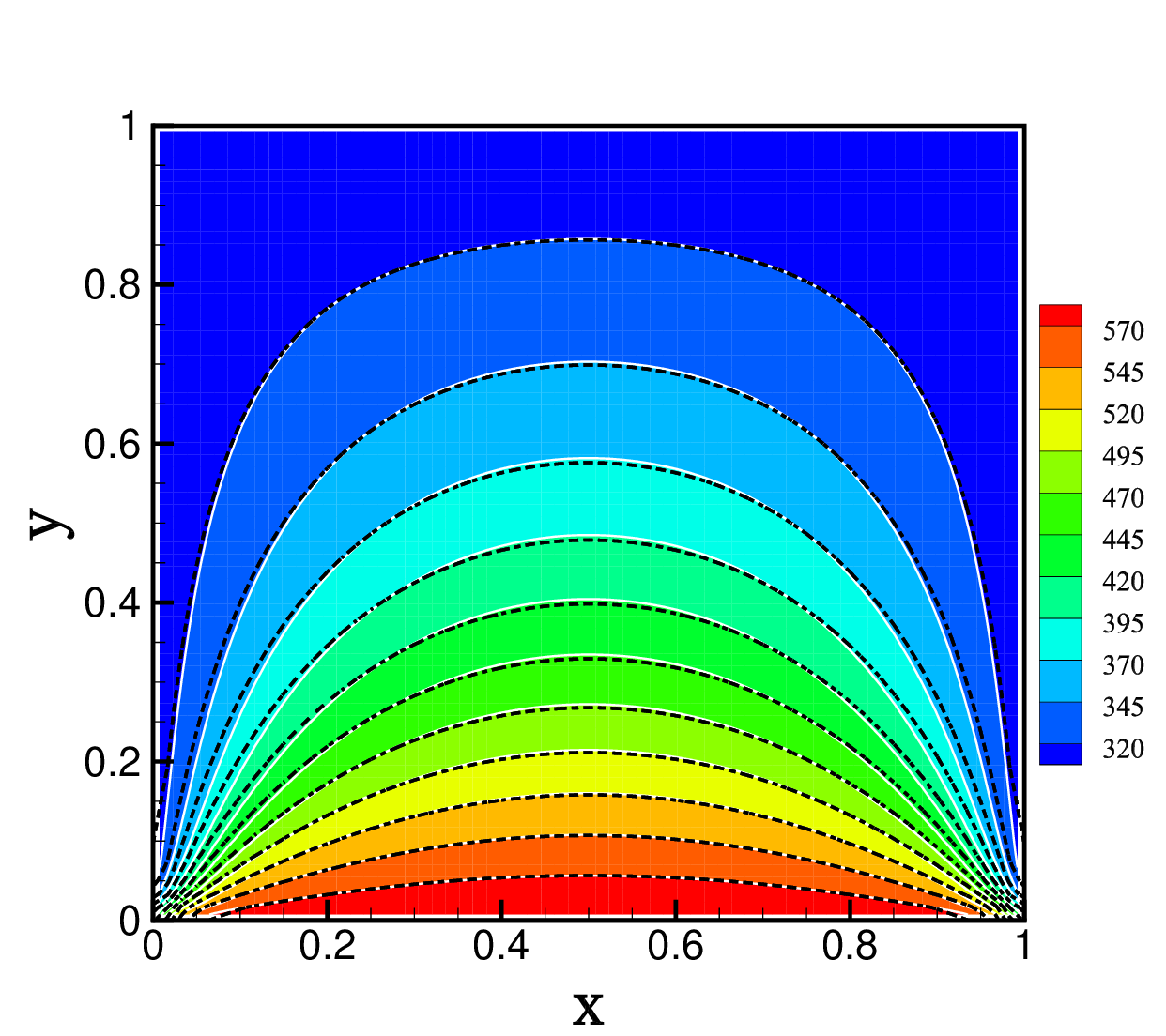}
	}\\
	\subfloat[$M = 10$, $\Kn = 0.1$]{
	    \includegraphics[width=0.3\textwidth]{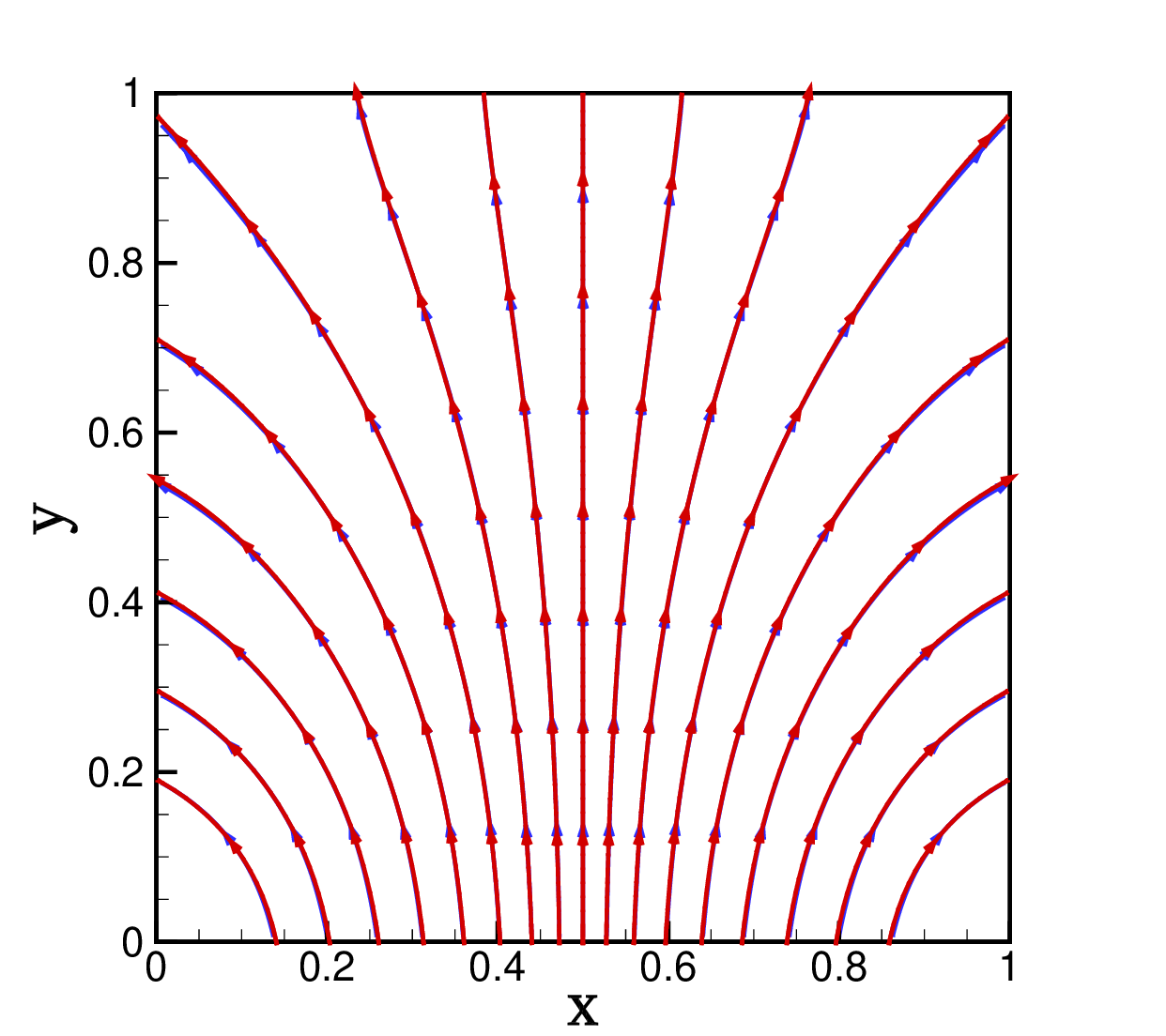}
	}\hfill
	\subfloat[$M = 8$, $\Kn = 0.01$]{
	    \includegraphics[width=0.3\textwidth]{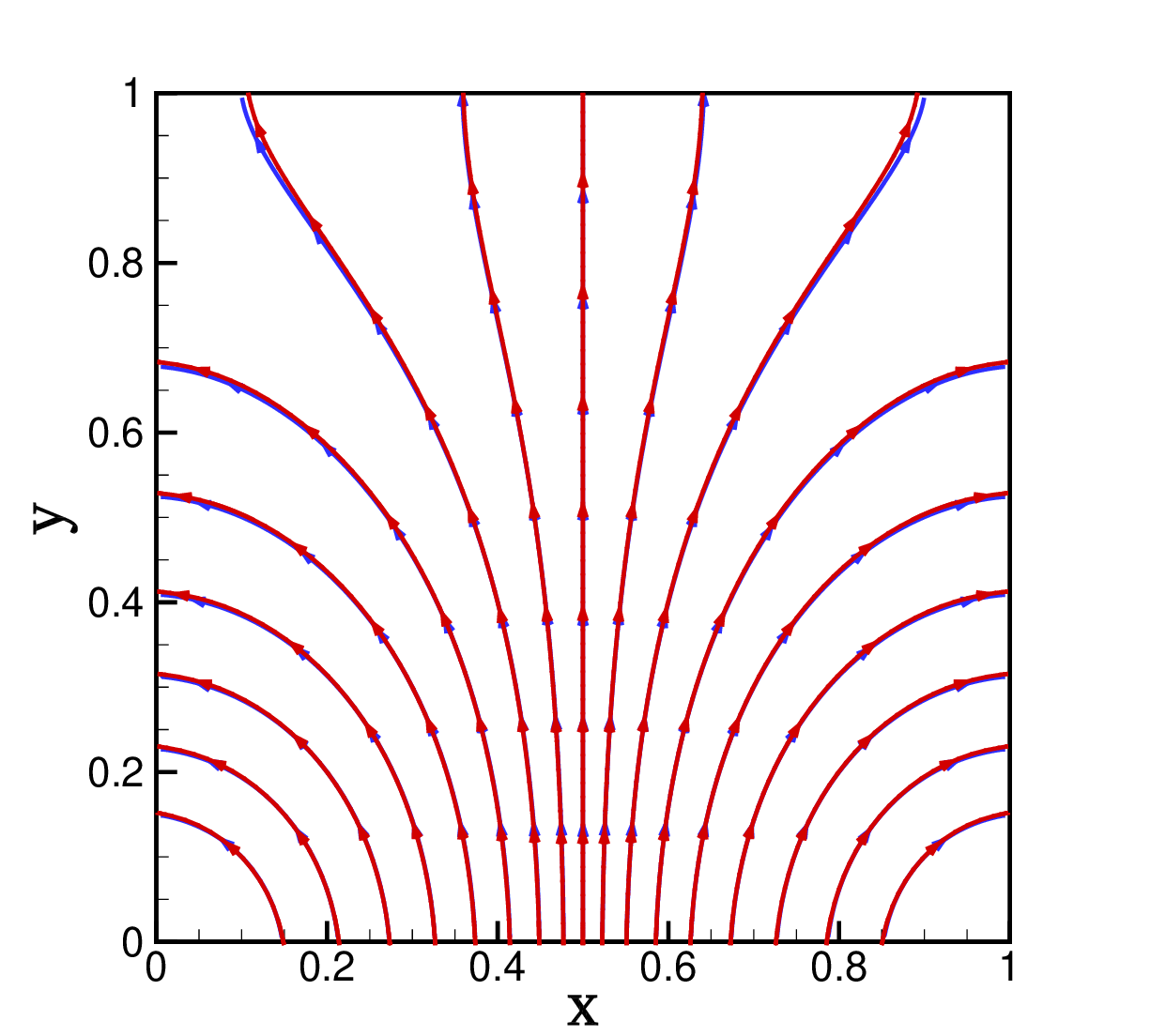}
	}\hfill
	\subfloat[$M = 6$, $\Kn = 0.001$]{
	    \includegraphics[width=0.3\textwidth]{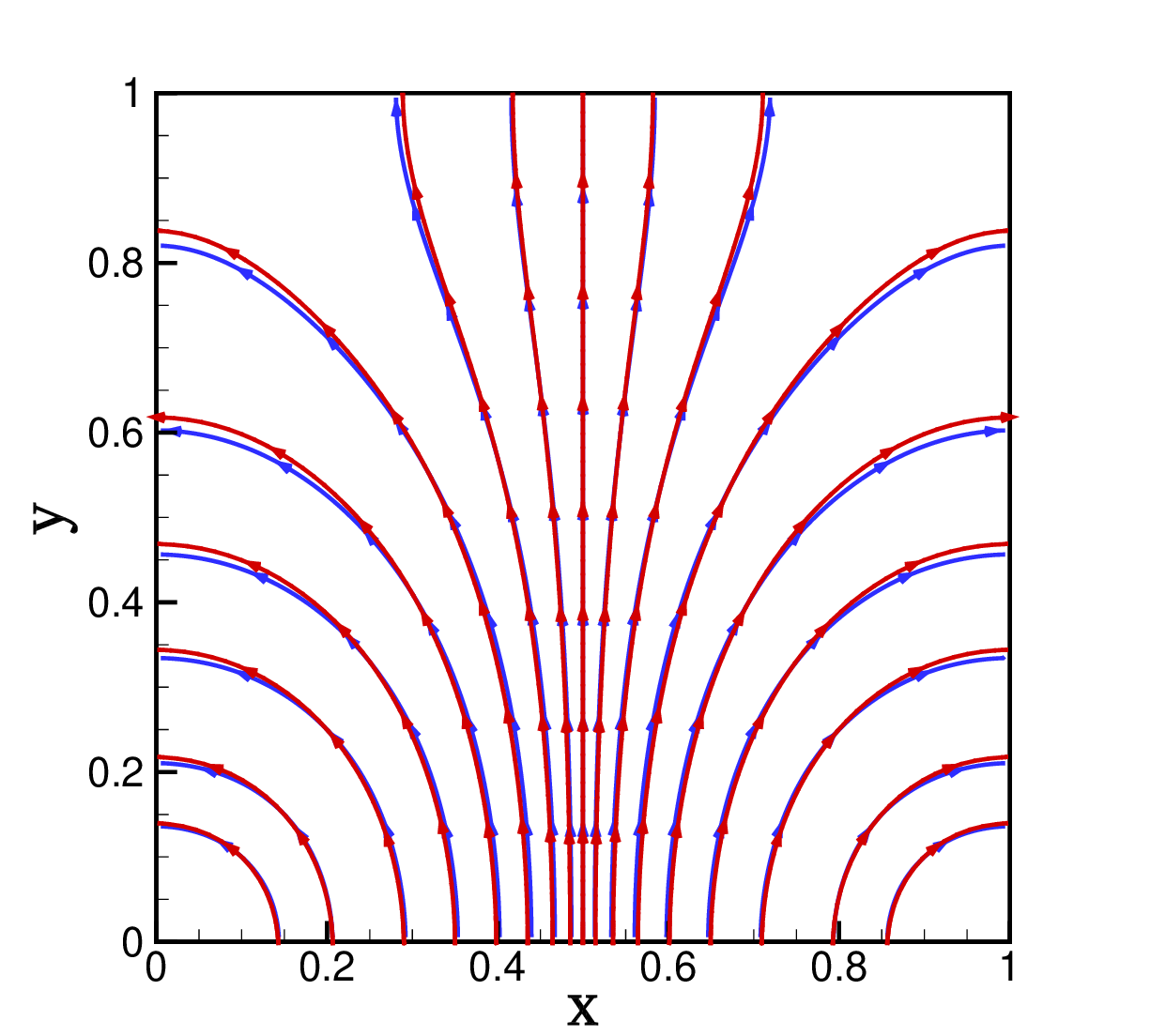}
	}
	\caption{Temperature $(\si{\kelvin})$ contours and heat flux streamlines of
	heat transfer on the uniform $N_x \times N_y = 64 \times 64$ grid. The dashed
	and red lines are the reference DUGKS solutions.}
	\label{fig:heat_transfer_solutions}
\end{figure}

The performance of the FIM solvers is again examined for this problem. The total numbers of iterations and wall-clock times on the $64\times64$ grid are listed in \cref{tab:heat-1st} and \cref{tab:heat-2nd} for the first- and second-order discretization, respectively, with the corresponding convergence histories shown in \cref{fig:heat-his-1st} and \cref{fig:heat-his-2nd}. As in the lid-driven cavity flow, FIM-3 relies on empirical tuning of $\gamma_2$, whereas FIM-4 employs one Newton-multigrid step in each alternating iteration without case-dependent tuning. 
Compared with the tuned FIM-3, FIM-4 is moderately less efficient at $\Kn=0.1$ but becomes significantly faster at smaller Knudsen numbers, particularly for the second-order discretization. Moreover, its efficiency could potentially be further improved by adopting a Jacobian simplification strategy similar to that proposed in \cite{Hu2026SWEs}.

\begin{table}[!htb]
	\centering
	\caption{Performance results of the IMEX-SGS, FIM-3 and FIM-4 for the heat transfer
		in a bottom-heated cavity at various $\Kn$ with $N_x = N_y = 64$ and first-order
		discretization.}
	\label{tab:heat-1st}
	\begin{tabular}{ccccccccc}
		\toprule
		\multirow{2}{*}{$\Kn$} & \multirow{2}{*}{$M$} &
		\multicolumn{2}{c}{IMEX-SGS} &
		\multicolumn{2}{c}{FIM-3\textsuperscript{$\dagger$}} &
		\multicolumn{2}{c}{FIM-4} &
		\multirow{2}{*}{Speedup \textsuperscript{$\ddagger$}} \\
		\cmidrule(lr){3-4} \cmidrule(lr){5-6} \cmidrule(lr){7-8}
		& & Steps & Time (s) &
		Steps & Time (s) &
		Steps & Time (s) & \\
		\midrule
		0.1   & 10 & 2264 & 4707.65 & 350 & 1037.54 & 363 & 1251.78 & 0.83 \\
		& 9  & 2208 & 3564.67 & 341 & 741.98  & 362 & 962.90  & 0.77 \\
		0.01  & 8  & 3170 & 3897.69 & 88  & 241.27  & 62  & 126.31  & 1.91 \\
		& 7  & 3008 & 2847.76 & 84  & 181.50  & 59  & 90.20   & 2.01 \\
		0.001 & 6  & 3672 & 2444.45 & 52  & 117.56  & 65  & 72.77   & 1.62 \\
		& 5  & 3549 & 1804.98 & 51  & 88.75   & 64  & 55.11   & 1.61 \\
		\bottomrule
	\end{tabular}
	
	\smallskip
	\begin{tabular}{@{}p{\linewidth}@{}}
		\textsuperscript{$\dagger$} The parameter $\gamma_2$ in FIM-3 is set to $10$,
		$40$, and $80$ for $\Kn = 0.1$, $0.01$, and $0.001$, respectively. \\
		\textsuperscript{$\ddagger$} The speedup is the acceleration ratio of FIM-4 relative to FIM-3.
	\end{tabular}
\end{table}

\begin{figure}[!htb]
	\centering
	\subfloat[$\Kn=0.1$]{
	   \includegraphics[width=0.31\textwidth]{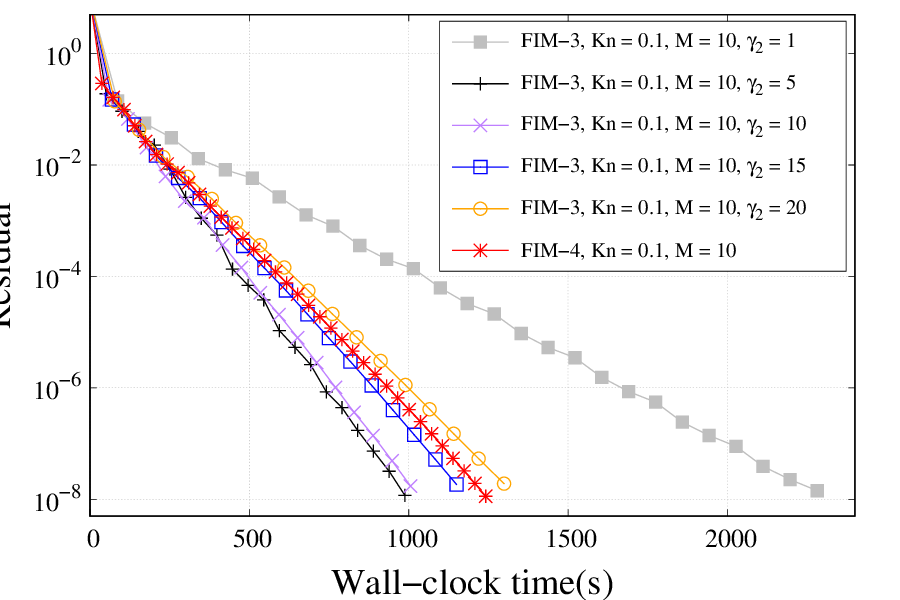}
	}\hfill
	\subfloat[$\Kn=0.01$]{
	   \includegraphics[width=0.31\textwidth]{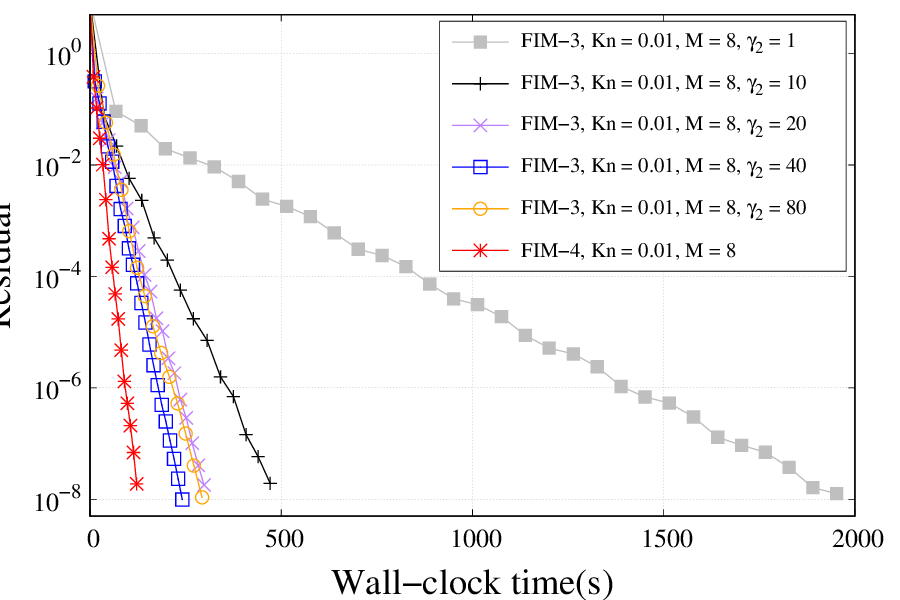}
	}\hfill
	\subfloat[$\Kn=0.001$]{
	   \includegraphics[width=0.31\textwidth]{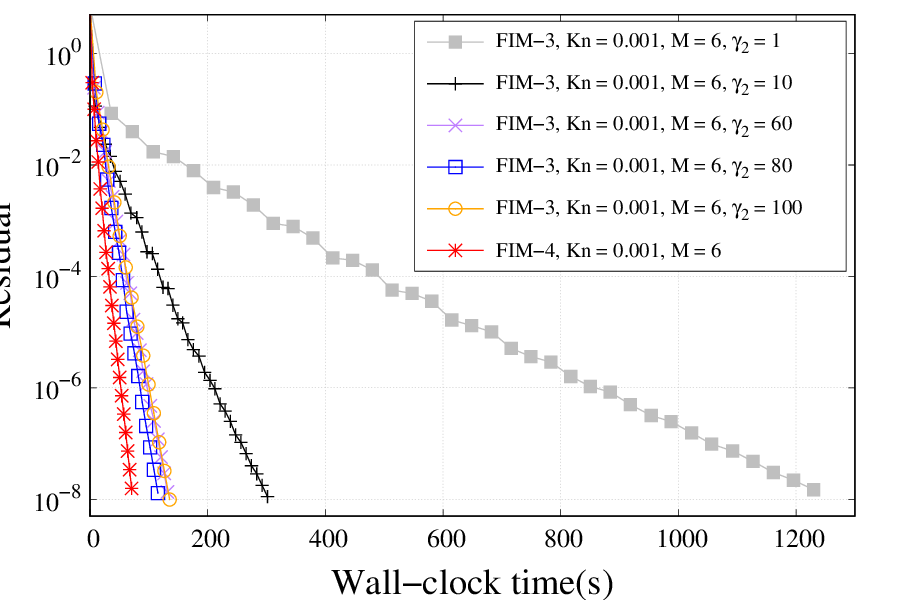}
	}
	\caption{Convergence histories for the heat transfer with $N_x = N_y = 64$
	and first-order spatial discretization.}
	\label{fig:heat-his-1st}
\end{figure}

\begin{table}[!htb]
	\centering
	\caption{Performance results of the IMEX-SGS, FIM-3 and FIM-4 for the heat transfer
		in a bottom-heated cavity at various $\Kn$ with $N_x = N_y = 64$ and second-order
		discretization.}
	\label{tab:heat-2nd}
	\begin{tabular}{ccccccccc}
		\toprule
		\multirow{2}{*}{$\Kn$} & \multirow{2}{*}{$M$} &
		\multicolumn{2}{c}{IMEX-SGS} &
		\multicolumn{2}{c}{FIM-3\textsuperscript{$\dagger$}} &
		\multicolumn{2}{c}{FIM-4} &
		\multirow{2}{*}{Speedup \textsuperscript{$\ddagger$}} \\
		\cmidrule(lr){3-4} \cmidrule(lr){5-6} \cmidrule(lr){7-8}
		& & Steps & Time (s) &
		Steps & Time (s) &
		Steps & Time (s) & \\
		\midrule
		0.1   & 10 & 3952  & 4948.82 & 669 & 1602.53 & 685 & 2518.59 & 0.64 \\
		& 9  & 3368  & 3463.51 & 631 & 1148.13 & 613 & 1641.27 & 0.70 \\
		0.01  & 8  & 10725 & 8313.79 & 299 & 842.87  & 126 & 280.90  & 3.00 \\
		& 7  & 9814  & 5490.72 & 274 & 599.04  & 116 & 191.66  & 3.13 \\
		0.001 & 6  & 69502 & 28705.4 & 966 & 3438.18 & 765 & 1052.61 & 3.27 \\
		& 5  & 61445 & 17354.4 & 861 & 2672.26 & 691 & 782.22  & 3.42 \\
		\bottomrule
	\end{tabular}
	
	\smallskip
	\begin{tabular}{@{}p{\linewidth}@{}}
		\textsuperscript{$\dagger$} The parameter $\gamma_2$ in FIM-3 is set to $10$,
		$40$, and $80$ for $\Kn = 0.1$, $0.01$, and $0.001$, respectively. \\
		\textsuperscript{$\ddagger$} The speedup is the acceleration ratio of FIM-4 relative to FIM-3.
	\end{tabular}
\end{table}

\begin{figure}[!htb]
	 \centering
	 \subfloat[$\Kn=0.1$]{
		 \includegraphics[width=0.31\textwidth]{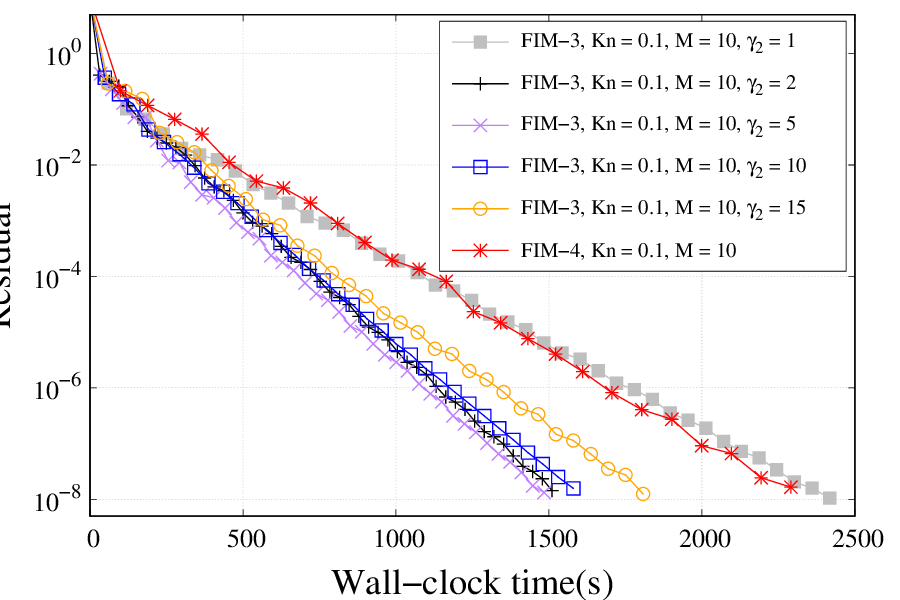}
	 }\hfill
	 \subfloat[$\Kn=0.01$]{
	    \includegraphics[width=0.31\textwidth]{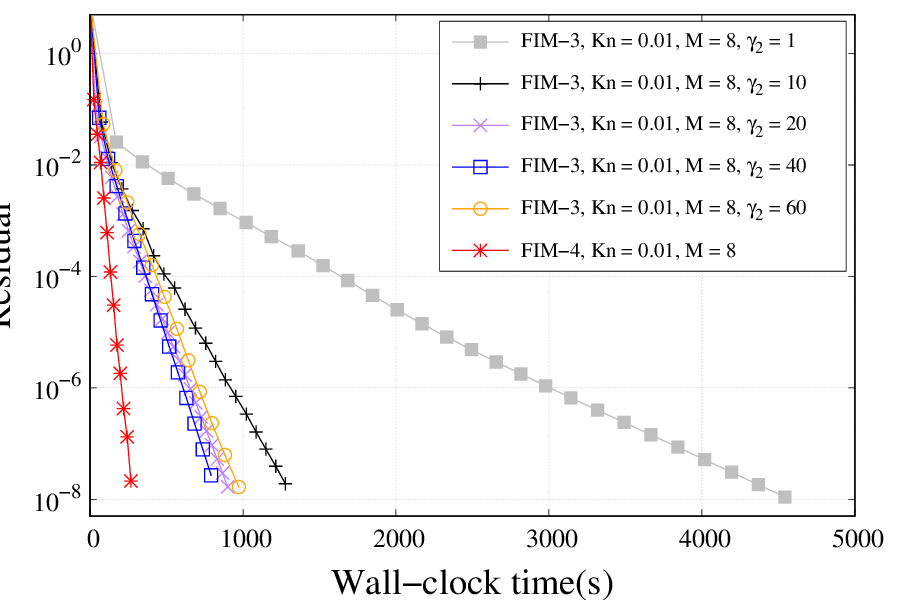}
	 }\hfill
	 \subfloat[$\Kn=0.001$]{
	    \includegraphics[width=0.31\textwidth]{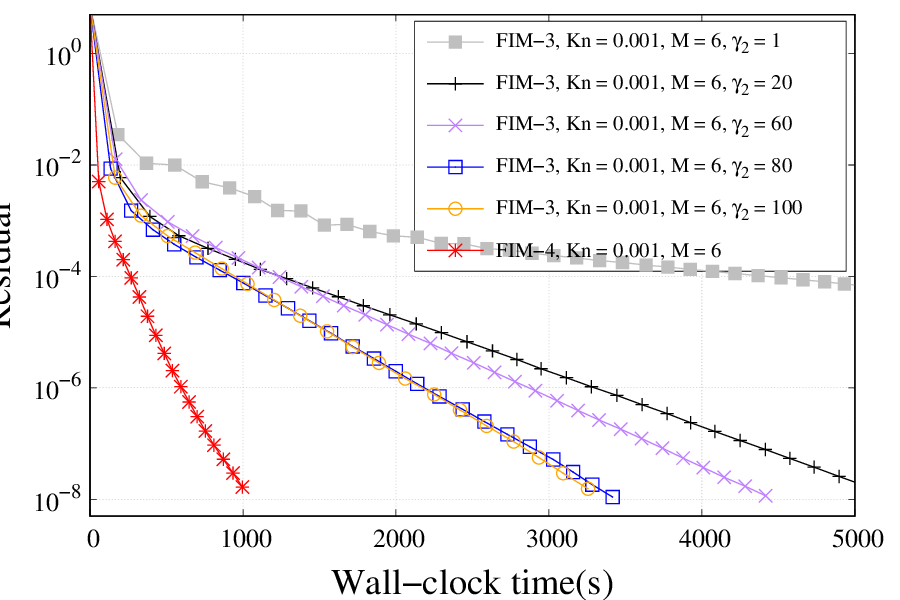}
	 }
	 \caption{Convergence histories for the heat transfer with $N_x = N_y = 64$
	 and second-order spatial discretization.}
	 \label{fig:heat-his-2nd}
\end{figure}

\section{Conclusion}
\label{sec:conclusion}
The fast iterative moment (FIM) method developed in \cite{Li2026} provides an efficient framework for accelerating steady-state solutions of near-continuum gas flows by alternating between the high-order moment system and the consistent hydrodynamic equations. In this work, the FIM solver is extended to the Shakhov model and further improved through second-order finite volume spatial discretization, an IMEX scheme with implicit treatment of the collision term, and a Newton-multigrid hydrodynamic solver equipped with a new regularization strategy. 
A local Fourier analysis of a linearized one-dimensional BGK model is performed to investigate the convergence behavior of the original FIM method. The analysis reveals the need for more hydrodynamic iterations at smaller Knudsen numbers and their slower convergence on refined grids, motivating the replacement of the time-integration-based hydrodynamic solver by a Newton-multigrid method.
The resulting solver also forms an integrated multilevel method, in which the FIM iteration and Newton-multigrid correction provide acceleration in the velocity and spatial spaces, respectively.

Numerical experiments for planar Couette flow, lid-driven cavity flow, and heat transfer in a bottom-heated cavity demonstrate that the improved FIM solver achieves significantly enhanced efficiency and robustness, particularly at small Knudsen numbers. Generally, one Newton-multigrid step in each alternating iteration is sufficient to maintain fast convergence, thereby substantially reducing the need for case-dependent parameter tuning. The numerical results also show that the proposed solver offers a greater efficiency advantage with second-order spatial discretization than with first-order discretization at small Knudsen numbers. Overall, the proposed method provides a practical and efficient framework for steady-state computation of high-order moment systems in near-continuum gas flows.

\appendix

\section{Expression of the moment system \cref{eq:moment-system}}
\label{sec:expression_HME}
The expression of the moment system \cref{eq:moment-system} is
\begin{align}
  \label{eq:mnt-eq}
  \begin{split}
    & \sum_{j=1}^D \Bigg[ \left( \theta \frac{\partial f_{\alpha -
          e_j}}{\partial x_j} + u_j \frac{\partial
        f_{\alpha}}{\partial x_j} + (1-\delta_{|\alpha|,M})(\alpha_j +
      1) \frac{\partial f_{\alpha+e_j}}{\partial x_j} \right) \\ &+
    \sum_{d=1}^3 \frac{\partial u_d}{\partial x_j} \left( \theta
      f_{\alpha-e_d-e_j} + u_j f_{\alpha-e_d} +
      (1-\delta_{|\alpha|,M}) (\alpha_j + 1) f_{\alpha-e_d+e_j}
    \right) \\ &+ \frac{1}{2} \frac{\partial \theta}{\partial x_j}
    \sum_{d=1}^3 \left( \theta f_{\alpha-2e_d-e_j} + u_j
      f_{\alpha-2e_d} + (1-\delta_{|\alpha|,M}) (\alpha_j + 1)
		f_{\alpha-2e_d+e_j} \right) \Bigg] = \bar{\nu} \bra{f_{\alpha}^{\text{E}} -
		f_{\alpha}} = \mQ_{\alpha}, \quad |\alpha| \leq M,
    \end{split}
\end{align}
where $\mQ_{\alpha}$ denotes the $\alpha$th Hermite expansion coefficient of the collision term $\mQ(f)$, and $f_{\alpha}$ is taken as zero if any component of $\alpha$ is negative.
For the Shakhov model,
\begin{align}
	\label{eq:f_E_alpha}
	f_{\alpha}^{\text{E}} = 
	\left \{
	\begin{aligned}
		& \rho, &  &  \text{if } \alpha = 0, \\
		& \frac{1-\Pr}{5} q_j, &  &  \text{if } \alpha = 2 e_i + e_j, \quad
		i,j = 1, 2, 3, \\
		& 0, &	& 	\text{otherwise}.
	\end{aligned} \right.
\end{align}

\section{The hydrodynamic boundary conditions}
\label{sec:boundary-conditions}

Assume the solid wall has temperature $\theta^W$ and velocity $\bu^W$.
For the case where the outward unit normal vector is
$\bn=e_1=(1,0,0)^{\mathsf{T}}$, let
$[\rho^+,\bu^+,\theta^+]$ and $\tilde f^+$ denote the updated interior
macroscopic quantities and the moment solution, respectively. The ghost-cell macroscopic quantities
$[\rho^g,\bu^g,\theta^g,\bsigma^g,\bq^g]$
are obtained from the formulae in \cite{Cai2013microflow} as
\begin{align}
	\begin{aligned}
		\rho^{g} & = \rho^{+}, \quad \bu^{g} = \bu^{+} + 2(u_1^{W} - u_{1}^{+}) e_1,
		\quad \theta^{g} = \theta^{+}, \quad \sigma_{kk}^{g} = \tilde{\sigma}_{kk}^{+}
		(k = 1,2,3), \quad \sigma_{23}^{g} = \tilde{\sigma}_{23}^{+}, \\
		\sigma_{12}^{g} & = \frac{4\chi}{2 - \chi} \bra{\sum_{k=0}^{\lfloor (M-1)/2
					\rfloor} S(1,2k) \bra{\theta^{+}}^{\frac{1}{2}-k} \tilde{f}_{(2k,1,0)}^{+} +
			\bra{u_2^{+} - u_{2}^{W}} \sum_{k=0}^{\lfloor M/2 \rfloor} S(1,2k)
			\bra{\theta^{+}}^{\frac{1}{2}-k} \tilde{f}_{(2k,0,0)}^{+}} - \tilde{\sigma}_{12}^{+}, \\
		\sigma_{13}^{g} & = \frac{4\chi}{2 - \chi} \bra{\sum_{k=0}^{\lfloor (M-1)/2
					\rfloor} S(1,2k) \bra{\theta^{+}}^{\frac{1}{2}-k} \tilde{f}_{(2k,0,1)}^{+} +
			\bra{u_3^{+} - u_{3}^{W}} \sum_{k=0}^{\lfloor M/2 \rfloor} S(1,2k)
			\bra{\theta^{+}}^{\frac{1}{2}-k} \tilde{f}_{(2k,0,0)}^{+}} - \tilde{\sigma}_{13}^{+}, \\
		q_{1}^{g} & = \frac{12 \chi}{2 - \chi} \bra{\sum_{k=0}^{\lfloor M/2
					\rfloor} S(3,2k) \bra{\theta^{+}}^{\frac{3}{2}-k}
			\tilde{f}_{(2k,0,0)}^{+} + \bra{\frac{\theta^{+}}{2} -
				\frac{\theta^W}{3}} \sum_{k=0}^{\lfloor M/2 \rfloor} S(1,2k)
			\bra{\theta^{+}}^{\frac{1}{2}-k} \tilde{f}_{(2k,0,0)}^{+}}
		- 3 \tilde{f}_{(3,0,0)}^{+} \\
		& + \frac{4 \chi}{2 - \chi} \bra{\sum_{k=0}^{\lfloor (M-2)/2 \rfloor} S(1,2k)
			\bra{\theta^{+}}^{\frac{1}{2}-k} \tilde{f}_{(2k,2,0)}^{+}
			- \frac{1}{2} \bra{(\theta^{W} - \theta^{+}) + (u_{2}^{W} -
				u_{2}^{+})^2} \sum_{k=0}^{\lfloor M/2 \rfloor} S(1,2k)
			\bra{\theta^{+}}^{\frac{1}{2}-k} \tilde{f}_{(2k,0,0)}^{+}} -
		\tilde{f}_{(1,2,0)}^{+} \\
		& + \frac{4 \chi}{2 - \chi} \bra{\sum_{k=0}^{\lfloor (M-2)/2 \rfloor} S(1,2k)
			\bra{\theta^{+}}^{\frac{1}{2}-k} \tilde{f}_{(2k,0,2)}^{+}
			- \frac{1}{2} \bra{(\theta^{W} - \theta^{+}) + (u_{3}^{W} -
				u_{3}^{+})^2} \sum_{k=0}^{\lfloor M/2 \rfloor} S(1,2k)
			\bra{\theta^{+}}^{\frac{1}{2}-k} \tilde{f}_{(2k,0,0)}^{+}} -
		\tilde{f}_{(1,0,2)}^{+}, \\
		q_{2}^{g} & = \tilde{q}_2^{+}, \quad q_{3}^{g} = \tilde{q}_3^{+},
	\end{aligned}
\end{align}
where $\chi =1$ is the accommodation coefficient.
The stress tensor $\tilde{\bsigma}^{+}$ and heat flux $\tilde{\bq}^{+}$ are
computed from the expansion coefficients $\tilde{f}_{\alpha}^{+}$ of
$\tilde{f}^{+}$.
The function $S(\cdot,\cdot)$ is defined recursively as
\begin{equation}
	\label{eq:S}
	S(r,s) =
	\left\{
	\begin{aligned}
		& 1/2, & r=s=0,\\
		& K(1,s-1), & r=0,\ s\neq0,\\
		& K(r,0), & r\neq0,\ s=0,\\
		& K(r,s)+S(r-1,s-1)s/r, & \mbox{otherwise},
	\end{aligned}
	\right.
\end{equation}
where the auxiliary function $K(\cdot,\cdot)$ is defined as
\begin{equation}
	\label{eq:K}
	K(r,s)=
	\left\{
	\begin{aligned}
		& \frac{(-1)^{(r+s-1)/2}(s-1)!!}
		{\sqrt{2\pi}\,r2^{(r-1)/2}((r-1)/2)!},
		& \mbox{$r$ is odd and $s$ is even},\\
		&0,
		&\mbox{otherwise}.
	\end{aligned}
	\right.
\end{equation}
\section{Derivation of the Fourier convergence estimates}
\label{app:fim3-fourier-analysis}

This appendix gives the details behind the Fourier-analysis results stated in
\cref{sec:fim3-convergence-analysis}. We use the moment system
\eqref{eq:fim3analysis-model}--\eqref{eq:fim3analysis-matrices}, the
hydrodynamic variables in \eqref{eq:fim3analysis-hydroblock}, and the SGS
iterations \eqref{eq:fim3analysis-kin-forward-state}--\eqref{eq:fim3analysis-hydro-backward-state}.
Throughout the derivation, the parameters satisfy
\begin{equation}
 0<\nu<1,
 \qquad C_{M+1}\geq\sqrt3.
 \label{eq:fim3app-parameter-range}
\end{equation}
The first condition is the CFL restriction for the model, while the second holds automatically for $M\geq 2$.

\subsection{Error evolution and the error-propagation matrix}
\label{subsec:fim3app-branch-separation}

We first clarify the relationship between $\delta \boldsymbol u_j^{(n+1)}$ and $\delta \boldsymbol u_j^{(n)}$, defined in \eqref{eq:fim3analysis-outer-error}. Recall that $\boldsymbol u_j$ is the exact discrete solution in \eqref{eq:fim3analysis-discrete-equation}. For one moment SGS iteration, define the errors before the forward sweep, after the forward sweep, and after the backward sweep by
\begin{equation}
 \boldsymbol e_j^{n,s}=\boldsymbol u_j^{(n,s)}-\boldsymbol u_j,
 \qquad
 \boldsymbol e_j^{n,s+1/2}=\boldsymbol u_j^{(n,s+1/2)}-\boldsymbol u_j,
 \qquad
 \boldsymbol e_j^{n,s+1}=\boldsymbol u_j^{(n,s+1)}-\boldsymbol u_j.
 \label{eq:fim3app-sweep-errors}
\end{equation}
Then $\delta \boldsymbol u_j^{(n)} = \boldsymbol e_j^{n,0}$. Subtracting \eqref{eq:fim3analysis-discrete-equation} from the two moment SGS updates \eqref{eq:fim3analysis-kin-forward-state} and \eqref{eq:fim3analysis-kin-backward-state} gives the moment error evolution
\begin{align}
 \left(\mathbf I-\frac{\omega}{\varepsilon}\mathbf L\right)
 \boldsymbol e_j^{n,s+1/2}
 &=(1-\nu)\boldsymbol e_j^{n,s}
 +\frac{\nu}{2}\left(\mathbf I+\frac{\mathbf A}{C_{M+1}}\right)
 \boldsymbol e_{j-1}^{n,s+1/2}
 +\frac{\nu}{2}\left(\mathbf I-\frac{\mathbf A}{C_{M+1}}\right)
 \boldsymbol e_{j+1}^{n,s},
 \label{eq:fim3app-kin-forward-error}\\
 \left(\mathbf I-\frac{\omega}{\varepsilon}\mathbf L\right)
 \boldsymbol e_j^{n,s+1}
 &=(1-\nu)\boldsymbol e_j^{n,s+1/2}
 +\frac{\nu}{2}\left(\mathbf I+\frac{\mathbf A}{C_{M+1}}\right)
 \boldsymbol e_{j-1}^{n,s+1/2}
 +\frac{\nu}{2}\left(\mathbf I-\frac{\mathbf A}{C_{M+1}}\right)
 \boldsymbol e_{j+1}^{n,s+1}.
 \label{eq:fim3app-kin-backward-error}
\end{align}
For later use, we will split each moment error into its first three and last $M-2$
components:
\begin{equation}
 \boldsymbol e_j^{n,r}
 =\begin{pmatrix}
   \boldsymbol H_j^{n,r}\\
   \boldsymbol K_j^{n,r}
  \end{pmatrix},
 \quad
 \boldsymbol H_j^{n,r}
 =\begin{pmatrix}e_{0,j}^{n,r}\\e_{1,j}^{n,r}\\e_{2,j}^{n,r}\end{pmatrix},
 \quad
 \boldsymbol K_j^{n,r}
 =\begin{pmatrix}e_{3,j}^{n,r}\\\vdots\\e_{M,j}^{n,r}\end{pmatrix},
 \qquad r=0,\frac12,1,\frac32,\ldots,\gamma_1.
 \label{eq:fim3app-moment-error-split}
\end{equation}

For the hydrodynamic SGS stage, similarly define
\begin{equation}
 \boldsymbol E_j^{n,\ell}
 :=\boldsymbol U_j^{(n,\ell)}-\boldsymbol U_j,
 \qquad
 \boldsymbol U_j=(u_{j,0},u_{j,1},u_{j,2})^{\mathsf{T}},
 \qquad
 \ell=0,\frac12,1,\frac32,\ldots,\gamma_2.
 \label{eq:fim3app-hydro-error}
\end{equation}
The initialization of the hydrodynamic stage $\boldsymbol{U}_j^{(n,0)} = (u_{j,0}^{(n,\gamma_1)}, u_{j,1}^{(n,\gamma_1)}, u_{j,2}^{(n,\gamma_1)})^{\mathsf{T}}$ gives $\boldsymbol E_j^{n,0}=\boldsymbol H_j^{n,\gamma_1}$, and the two Hydro-SGS updates \eqref{eq:fim3analysis-hydro-forward-state} and \eqref{eq:fim3analysis-hydro-backward-state} give
\begin{align}
 \boldsymbol E_j^{n,\ell+1/2}={}&
 (1-\nu)\boldsymbol E_j^{n,\ell}
 +\frac{\nu}{2}\left(\mathbf I+\frac{\mathbf A_h}{C_{M+1}}\right)
 \boldsymbol E_{j-1}^{n,\ell+1/2}
 +\frac{\nu}{2}\left(\mathbf I-\frac{\mathbf A_h}{C_{M+1}}\right)
 \boldsymbol E_{j+1}^{n,\ell}
-\frac{\nu}{2C_{M+1}}\boldsymbol b
 \left(e_{3,j+1}^{n,\gamma_1}-e_{3,j-1}^{n,\gamma_1}\right),
 \label{eq:fim3app-hydro-forward-error}\\
 \boldsymbol E_j^{n,\ell+1}={}&
 (1-\nu)\boldsymbol E_j^{n,\ell+1/2}
 +\frac{\nu}{2}\left(\mathbf I+\frac{\mathbf A_h}{C_{M+1}}\right)
 \boldsymbol E_{j-1}^{n,\ell+1/2}
 +\frac{\nu}{2}\left(\mathbf I-\frac{\mathbf A_h}{C_{M+1}}\right)
 \boldsymbol E_{j+1}^{n,\ell+1}
-\frac{\nu}{2C_{M+1}}\boldsymbol b
 \left(e_{3,j+1}^{n,\gamma_1}-e_{3,j-1}^{n,\gamma_1}\right).
 \label{eq:fim3app-hydro-backward-error}
\end{align}
Finally, the reconstruction step \eqref{eq:fim3analysis-reconstruction} gives the complete outer error after one FIM
iteration:
\begin{equation}
  \delta\boldsymbol u_j^{(n+1)}
  = \begin{pmatrix}
    \boldsymbol E_j^{n,\gamma_2}\\
    \boldsymbol K_j^{n,\gamma_1}
  \end{pmatrix}.
 \label{eq:fim3app-reconstructed-error}
\end{equation}

We now apply the specific form of $\delta \boldsymbol{u}_j^{(n)}$ in \eqref{eq:fim3analysis-full-fourier-error} to find the error-propagation matrix $\mathbf T(\varepsilon,k)$ in \eqref{eq:fim3analysis-fourier-iteration-matrix}. Plugging $\delta \boldsymbol u_j^{(n)} = \widehat{\boldsymbol e}^{(n)}(k) e^{\mathrm i j k\Delta x}$ into \eqref{eq:fim3app-kin-forward-error}--\eqref{eq:fim3app-hydro-backward-error} gives
\begin{equation}
  \widehat{\boldsymbol e}^{(n+1)}(k) = \underbrace{\Big([\mathbf{M}_{\mathrm{hb}}(k)]^{-1}\mathbf{N}_{\mathrm{hb}}(k) [\mathbf{M}_{\mathrm{hf}}(k)]^{-1}\mathbf{N}_{\mathrm{hf}}(k) \Big)^{\gamma_2} \Big([\mathbf{M}_{\mathrm{kb}}(\varepsilon,k)]^{-1}\mathbf{N}_{\mathrm{kb}}(k) [\mathbf{M}_{\mathrm{kf}}(\varepsilon,k)]^{-1}\mathbf{N}_{\mathrm{kf}}(k)\Big)^{\gamma_1}}_{\mathbf T(\varepsilon,k)}\widehat{\boldsymbol e}^{(n)}(k),
  \label{eq:fim3app-fourier-iteration-matrix}
\end{equation}
where $\mathbf{M}_{\mathrm{kf}}$, $\mathbf{N}_{\mathrm{kf}}$, $\mathbf{M}_{\mathrm{kb}}$, $\mathbf{N}_{\mathrm{kb}}$ arise from the moment SGS stage:
\begin{gather*}
  \mathbf M_{\mathrm{kf}}(\varepsilon,k)
  :=\mathbf I-\frac{\omega}{\varepsilon}\mathbf L
  -\frac{\nu}{2}\left(\mathbf I+\frac{\mathbf A}{C_{M+1}}\right)
  e^{-\mathrm ik\Delta x}, \qquad
  \mathbf N_{\mathrm{kf}}(k)
  :=(1-\nu)\mathbf I
  +\frac{\nu}{2}\left(\mathbf I-\frac{\mathbf A}{C_{M+1}}\right)
  e^{\mathrm i k\Delta x}, \\
  \mathbf M_{\mathrm{kb}}(\varepsilon,k)
  :=\mathbf I-\frac{\omega}{\varepsilon}\mathbf L
  -\frac{\nu}{2}\left(\mathbf I-\frac{\mathbf A}{C_{M+1}}\right)
  e^{\mathrm ik \Delta x}, \qquad
  \mathbf N_{\mathrm{kb}}(k)
  :=(1-\nu)\mathbf I
  +\frac{\nu}{2}\left(\mathbf I+\frac{\mathbf A}{C_{M+1}}\right)
  e^{-\mathrm ik \Delta x},
\end{gather*}
and $\mathbf{M}_{\mathrm{hf}}$, $\mathbf{N}_{\mathrm{hf}}$, $\mathbf{M}_{\mathrm{hb}}$, $\mathbf{N}_{\mathrm{hb}}$ arise from the hydrodynamic SGS stage. The detailed expressions of these four matrices will be omitted, and only their block structures are shown below:
\begin{equation}
  \mathbf M_{\mathrm{hf}}(k)
  :=\begin{pmatrix}
    * & \mathbf0\\
    \mathbf0 & \mathbf I_{M-2}
  \end{pmatrix}, \quad
  \mathbf N_{\mathrm{hf}}(k)
  :=\begin{pmatrix}
    * & * \\
    \mathbf0 & \mathbf I_{M-2}
  \end{pmatrix},\quad
  \mathbf M_{\mathrm{hb}}(k)
  :=\begin{pmatrix}
    * & \mathbf0\\
    \mathbf0 & \mathbf I_{M-2}
  \end{pmatrix},\quad
  \mathbf N_{\mathrm{hb}}(k)
  :=\begin{pmatrix}
    * & * \\
    \mathbf0 & \mathbf I_{M-2}
  \end{pmatrix}.
  \label{eq:MN-block-structure}
\end{equation}
Here $*$ denotes a potentially nonzero block.

\subsection{Order analysis of the error-propagation matrix}
We now write the matrices $\mathbf{T}(\varepsilon,k)$, $\mathbf{M}_{\mathrm{kf}}(\varepsilon,k)$ and $\mathbf{M}_{\mathrm{kb}}(\varepsilon,k)$ in block form, separating the hydrodynamic and nonequilibrium components:
\begin{gather*}
  \mathbf T(\varepsilon,k)
  =\begin{pmatrix}
    \mathbf T^{\mathrm{11}}(\varepsilon,k) & \mathbf T^{\mathrm{12}}(\varepsilon,k)\\
    \mathbf T^{\mathrm{21}}(\varepsilon,k) & \mathbf T^{\mathrm{22}}(\varepsilon,k)
  \end{pmatrix},\qquad
  \mathbf M_{\mathrm{kf}}(\varepsilon,k)
  =\begin{pmatrix}
    \mathbf M_{\mathrm{kf}}^{\mathrm{11}}(k) & \mathbf M_{\mathrm{kf}}^{\mathrm{12}}(k)\\[3pt]
    \mathbf M_{\mathrm{kf}}^{\mathrm{21}}(k) & \mathbf M_{\mathrm{kf}}^{\mathrm{22}}(\varepsilon,k)
  \end{pmatrix},\qquad
  \mathbf M_{\mathrm{kb}}(\varepsilon,k)
  =\begin{pmatrix}
    \mathbf M_{\mathrm{kb}}^{\mathrm{11}}(k) & \mathbf M_{\mathrm{kb}}^{\mathrm{12}}(k)\\[3pt]
    \mathbf M_{\mathrm{kb}}^{\mathrm{21}}(k) & \mathbf M_{\mathrm{kb}}^{\mathrm{22}}(\varepsilon,k)
  \end{pmatrix},
\end{gather*}
where we have incorporated the fact that some blocks of $\mathbf{M}_{\mathrm{kf}}$ and $\mathbf{M}_{\mathrm{kb}}$ are independent of $\varepsilon$. We would like to show that $\mathbf T^{\mathrm{21}}(\varepsilon,k)$ and $\mathbf T^{\mathrm{22}}(\varepsilon,k)$ are $O(\varepsilon)$, which implies that the nonequilibrium components of a bounded error become $O(\varepsilon)$ after one FIM iteration.

Notice that $\mathbf M_{\mathrm{kf}}^{\mathrm{22}}(\varepsilon,k)$ has the form
\begin{displaymath}
  \mathbf{M}_{\mathrm{kf}}^{\mathrm{22}}(\varepsilon,k) = \frac{\omega}{\varepsilon}\mathbf I_{M-2} + \widetilde{\mathbf{M}}_{\mathrm{kf}}^{\mathrm{22}}(k),
\end{displaymath}
and $\mathbf{M}_{\mathrm{kf}}^{\mathrm{11}}(k)$ is invertible for all $k$ under the parameter range \eqref{eq:fim3app-parameter-range}.
We can then define the Schur complement of the upper-left block by
\begin{displaymath}
  \mathbf S_{\mathrm{kf}}(\varepsilon,k)
  :=\frac{\omega}{\varepsilon} \mathbf{I}_{M-2}+\widetilde{\mathbf M}_{\mathrm{kf}}^{\mathrm{22}}(k)
  -\mathbf M_{\mathrm{kf}}^{\mathrm{21}}(k)
  [\mathbf M_{\mathrm{kf}}^{\mathrm{11}}(k)]^{-1}
  \mathbf M_{\mathrm{kf}}^{\mathrm{12}}(k).
\end{displaymath}
One sees that $[\mathbf{S}_{\mathrm{kf}}(\varepsilon,k)]^{-1} = O(\varepsilon)$ as $\varepsilon \to 0$. Thus, by suppressing the dependence on $\varepsilon$ and $k$, the block inversion formula gives
\begin{equation}
  \mathbf M_{\mathrm{kf}}^{-1}
  =\begin{pmatrix}
    (\mathbf M_{\mathrm{kf}}^{\mathrm{11}})^{-1}
    +(\mathbf M_{\mathrm{kf}}^{\mathrm{11}})^{-1}
    \mathbf M_{\mathrm{kf}}^{\mathrm{12}}\mathbf S_{\mathrm{kf}}^{-1}
    \mathbf M_{\mathrm{kf}}^{\mathrm{21}}
    (\mathbf M_{\mathrm{kf}}^{\mathrm{11}})^{-1}
    &-(\mathbf M_{\mathrm{kf}}^{\mathrm{11}})^{-1}
    \mathbf M_{\mathrm{kf}}^{\mathrm{12}}
    \mathbf S_{\mathrm{kf}}^{-1}\\[3pt]
    -\mathbf S_{\mathrm{kf}}^{-1}\mathbf M_{\mathrm{kf}}^{\mathrm{21}}
    (\mathbf M_{\mathrm{kf}}^{\mathrm{11}})^{-1}
    &\mathbf S_{\mathrm{kf}}^{-1}
  \end{pmatrix} = \begin{pmatrix}
    O(1) & O(\varepsilon)\\
    O(\varepsilon) & O(\varepsilon)
  \end{pmatrix}.
  \label{eq:fim3app-Mkf-inverse-order}
\end{equation}
Similarly, one can also show that blocks of $[\mathbf{M}_{\mathrm{kb}}(\varepsilon,k)]^{-1}$ are of the same order. Together with \eqref{eq:fim3app-fourier-iteration-matrix} and \eqref{eq:MN-block-structure}, one can conclude that
\begin{displaymath}
  \mathbf T^{\mathrm{11}}(\varepsilon,k) = O(1),\qquad
  \mathbf T^{\mathrm{12}}(\varepsilon,k) = O(1),\qquad
  \mathbf T^{\mathrm{21}}(\varepsilon,k) = O(\varepsilon),\qquad
  \mathbf T^{\mathrm{22}}(\varepsilon,k) = O(\varepsilon).
\end{displaymath}
This allows us to analyze the leading-order amplification factors by considering only the hydrodynamic components of the error, which is the focus of the next two subsections.

\subsection{Leading-order amplification factors}
\label{subsec:fim3app-leading-factor}

The preceding subsection shows that the lower block row of $\mathbf T(\varepsilon,k)$ is $O(\varepsilon)$. For any eigenvalue branch with a nonzero limit, the last $M-2$ components of a normalized eigenvector are therefore $O(\varepsilon)$. Thus its leading part can be nonzero only in the first three components, and we can perform formal asymptotic expansions:
\begin{equation}
 \boldsymbol H_j^{n,r}
 =\boldsymbol H_j^{n,r,(0)}+\varepsilon \boldsymbol H_j^{n,r,(1)}+\cdots,
 \qquad
 \boldsymbol K_j^{n,r}=\varepsilon \boldsymbol K_j^{n,r,(1)}+\cdots,
 \qquad
 \boldsymbol E_j^{n,\ell} = \boldsymbol E_j^{n,\ell,(0)}+\varepsilon \boldsymbol E_j^{n,\ell,(1)}+\cdots.
 \label{eq:fim3app-leading-HKE-expansion}
\end{equation}
The next step is to insert these expansions into \eqref{eq:fim3app-kin-forward-error}--\eqref{eq:fim3app-kin-backward-error} and \eqref{eq:fim3app-hydro-forward-error}--\eqref{eq:fim3app-hydro-backward-error} and collect the leading-order terms. To avoid writing the same pair of equations twice, let
$(\boldsymbol X_j^0,\boldsymbol X_j^{1/2},\boldsymbol X_j^1)$ denote either
\begin{equation}
  \left(\boldsymbol H_j^{n,s,(0)},
        \boldsymbol H_j^{n,s+1/2,(0)},
        \boldsymbol H_j^{n,s+1,(0)}\right) \quad \text{or} \quad
  \left(\boldsymbol E_j^{n,\ell,(0)},
        \boldsymbol E_j^{n,\ell+1/2,(0)},
        \boldsymbol E_j^{n,\ell+1,(0)}\right).
\end{equation}
In both cases,
\begin{align}
 \boldsymbol X_j^{1/2}={}&
 (1-\nu)\boldsymbol X_j^0
 +\frac{\nu}{2}\left(\mathbf I+\frac{\mathbf A_h}{C_{M+1}}\right)
 \boldsymbol X_{j-1}^{1/2}
 +\frac{\nu}{2}\left(\mathbf I-\frac{\mathbf A_h}{C_{M+1}}\right)
 \boldsymbol X_{j+1}^{0},
 \label{eq:fim3app-forward-leading}\\
 \boldsymbol X_j^{1}={}&
 (1-\nu)\boldsymbol X_j^{1/2}
 +\frac{\nu}{2}\left(\mathbf I+\frac{\mathbf A_h}{C_{M+1}}\right)
 \boldsymbol X_{j-1}^{1/2}
 +\frac{\nu}{2}\left(\mathbf I-\frac{\mathbf A_h}{C_{M+1}}\right)
 \boldsymbol X_{j+1}^{1}.
 \label{eq:fim3app-backward-leading}
\end{align}
It is easy to see that when $\boldsymbol X_j^0 = a \boldsymbol v_{\mu} e^{\mathrm{i} j k \Delta x}$ for any eigenvector $\boldsymbol v_{\mu}$ of $\mathbf A_h$ associated with eigenvalue $\mu$, it holds that
\begin{displaymath}
  \boldsymbol X_j^{1/2} = \phi_{\mu}^{(0)}(k) a \boldsymbol v_{\mu} e^{\mathrm{i} j k \Delta x}, \qquad \boldsymbol X_j^1 = \psi_{\mu}^{(0)}(k) \phi_{\mu}^{(0)}(k) a \boldsymbol v_{\mu} e^{\mathrm{i} j k \Delta x},
\end{displaymath}
where
\begin{displaymath}
  \phi_\mu^{(0)}(k)=\frac{1-\nu+\dfrac{\nu}{2}\left(1-\dfrac{\mu}{C_{M+1}}\right)e^{\mathrm ik\Delta x}}{1-\dfrac{\nu}{2}\left(1+\dfrac{\mu}{C_{M+1}}\right)e^{-\mathrm ik\Delta x}}, \qquad
  \psi_\mu^{(0)}(k)=\frac{1-\nu+\dfrac{\nu}{2}\left(1+\dfrac{\mu}{C_{M+1}}\right)e^{-\mathrm ik\Delta x}}{1-\dfrac{\nu}{2}\left(1-\dfrac{\mu}{C_{M+1}}\right)e^{\mathrm ik\Delta x}}.
\end{displaymath}
Let $\mathcal G_\mu^{(0)}(k) = \phi_\mu^{(0)}(k)\psi_\mu^{(0)}(k)$ as in \eqref{eq:fim3analysis-G}, which represents the leading-order amplification factor of one SGS iteration. Note that $\mathbf{A}_h$ is the $3\times 3$ upper-left block of $\mathbf{A}$ with eigenvalues $\mu = -\sqrt{3}, 0, \sqrt{3}$. Define
\begin{displaymath}
  \mathbf{Q}(k) = \mathbf{V} \begin{pmatrix}
    \mathcal G_{-\sqrt3}^{(0)}(k) & 0 & 0\\
    0 & \mathcal G_0^{(0)}(k) & 0\\
    0 & 0 & \mathcal G_{\sqrt3}^{(0)}(k)
  \end{pmatrix} \mathbf{V}^{-1}, \qquad
  \mathbf{V} = (\boldsymbol v_{-\sqrt3}, \boldsymbol v_0, \boldsymbol v_{\sqrt3}).
\end{displaymath}
The leading order term of the propagation matrix $\mathbf{T}(\varepsilon,k)$ can be expressed as
\begin{displaymath}
  \mathbf{T}^{(0)}(k) =
    \begin{pmatrix} \mathbf{Q}(k) & \mathbf{B}_1(k) \\ \mathbf{0} & \mathbf{I}_{M-2} \end{pmatrix}^{\gamma_2}
    \begin{pmatrix} \mathbf{Q}(k) & \mathbf{B}_2(k) \\ \mathbf{0} & \mathbf{0} \end{pmatrix}^{\gamma_1} = \begin{pmatrix} [\mathbf{Q}(k)]^{\gamma_1+\gamma_2} & [\mathbf{Q}(k)]^{\gamma_1+\gamma_2-1} \mathbf{B}_2(k) \\ \mathbf{0} & \mathbf{0} \end{pmatrix},
\end{displaymath}
where we have dropped $O(\varepsilon)$ terms in the expression of $\mathbf{T}(\varepsilon,k)$ given in \eqref{eq:fim3app-fourier-iteration-matrix} using the block structures \eqref{eq:MN-block-structure} and \eqref{eq:fim3app-Mkf-inverse-order}, and $\mathbf{B}_1$ and $\mathbf{B}_2$ are some matrices whose details are not important for our analysis. Below, we are going to show that at least $M-2$ eigenvalues of $\mathbf{T}(\varepsilon,k) = \mathbf{T}^{(0)}(k) + \varepsilon \widetilde{\mathbf{T}}(\varepsilon,k)$ are $O(\varepsilon)$, where $\widetilde{\mathbf{T}}(\varepsilon,k) = O(1)$.

The proof begins with the diagonalization of $\mathbf{T}^{(0)}(k)$. Let
\begin{displaymath}
  \mathbf P(k)=\begin{pmatrix}
    \mathbf V&-\mathbf R(k)\\
    \mathbf0&\mathbf I_{M-2}
  \end{pmatrix}, \qquad
  \mathbf R(k)
  =\mathbf V\begin{pmatrix}
    h_{-\sqrt3} & 0 & 0\\
    0 & h_0 & 0\\
    0 & 0 & h_{\sqrt3}
  \end{pmatrix}\mathbf V^{-1}\mathbf B_2(k),
  \qquad
  h_\mu=\begin{cases}
    0, & \text{if } \mathcal{G}_\mu^{(0)}(k)=0,\\
    [\mathcal{G}_\mu^{(0)}(k)]^{-1}, & \text{otherwise.}
  \end{cases}
\end{displaymath}
Under the condition $\gamma_1 + \gamma_2 \geq 2$, straightforward calculations show that
\begin{equation}
  [\mathbf P(k)]^{-1}\mathbf T^{(0)}(k)\mathbf P(k) = \mathbf{D}(k)
  :=\operatorname{diag}\left([\mathcal{G}_{-\sqrt3}^{(0)}(k)]^{\gamma_1+\gamma_2},[\mathcal{G}_0^{(0)}(k)]^{\gamma_1+\gamma_2},
    [\mathcal{G}_{\sqrt3}^{(0)}(k)]^{\gamma_1+\gamma_2},\underbrace{0,\ldots,0}_{M-2}\right).
  \label{eq:fim3app-T0-diagonalization}
\end{equation}
Since $\mathbf P(k)$ is independent of $\varepsilon$, it follows that
\begin{equation}
  [\mathbf P(k)]^{-1}\mathbf T(\varepsilon,k)\mathbf P(k)
  =\mathbf D(k)+\varepsilon [\mathbf P(k)]^{-1}\widetilde{\mathbf T}(\varepsilon,k)\mathbf P(k)
  =\mathbf D(k) + O(\varepsilon).
\end{equation}
Let $m$ be the number of zero diagonal entries of $\mathbf D(k)$. Then by the Gershgorin circle theorem, when $\varepsilon$ is sufficiently small,
$m$ eigenvalues of $\mathbf T(\varepsilon,k)$ are contained in the union of the $m$ disks with centers and radii of order $O(\varepsilon)$, which are separated from the other $M+1-m$ disks. Since $m \geq M-2$, we conclude that at least $M-2$ eigenvalues of $\mathbf T(\varepsilon,k)$ are $O(\varepsilon)$. 

To summarize, we have shown that the three hydrodynamic leading-order amplification factors of the FIM iteration are
\begin{equation}
  \Lambda_\mu^{(0)}(k)
  =\left[\mathcal G_\mu^{(0)}(k)\right]^{\gamma_1+\gamma_2},
  \qquad \mu\in\{-\sqrt3,0,\sqrt3\},
  \label{eq:fim3app-leading-Lambda}
\end{equation}
and the remaining $M-2$ eigenvalues are $O(\varepsilon)$. The comparison of these three leading factors is the focus of the next subsection.

\subsection{Ordering of the three leading factors}
\label{subsec:fim3app-ordering}
In this subsection, we make two claims required in the main text:

\begin{proposition} \label{prop:fim3app-G-modulus}
Under \eqref{eq:fim3app-parameter-range}, for each
$\mu\in\{-\sqrt3,0,\sqrt3\}$, the quantity
$|\mathcal G_\mu^{(0)}(k)|$ is an even function of
$\zeta=k\Delta x$ and is nonincreasing with respect to $|\zeta|$ on
$0\leq|\zeta|\leq\pi$.  In particular,
\begin{equation}
 |\mathcal G_\mu^{(0)}(k)|\leq1,
 \qquad
 \mu\in\{-\sqrt3,0,\sqrt3\},
 \label{eq:fim3app-G-max-zero}
\end{equation}
and equality holds at $k=0$.
\end{proposition}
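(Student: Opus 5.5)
The plan is to reduce $|\mathcal G_\mu^{(0)}(k)|$ to a product of two elementary moduli, each a monotone function of $\cos\zeta$. Write $\zeta=k\Delta x$ and set
\begin{displaymath}
  a=\frac{\nu}{2}\left(1-\frac{\mu}{C_{M+1}}\right),\qquad b=\frac{\nu}{2}\left(1+\frac{\mu}{C_{M+1}}\right).
\end{displaymath}
Under \eqref{eq:fim3app-parameter-range} we have $|\mu|\le\sqrt3\le C_{M+1}$, so $a,b\ge0$. We also have $a+b=\nu$, and hence $a,b\le\nu<1$.

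With this notation, \eqref{eq:fim3analysis-G} factors as
\begin{displaymath}
  |\mathcal G_\mu^{(0)}(k)| = \frac{|1-\nu+a e^{\mathrm i\zeta}|}{|1-a e^{\mathrm i\zeta}|}\cdot\frac{|1-\nu+b e^{-\mathrm i\zeta}|}{|1-b e^{-\mathrm i\zeta}|}.
\end{displaymath}
The denominators never vanish because $a,b<1$.

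Next I square each factor. For $c\in\{a,b\}$ this gives
\begin{displaymath}
  F_c(\zeta):=\frac{(1-\nu)^2+c^2+2c(1-\nu)\cos\zeta}{1+c^2-2c\cos\zeta}.
\end{displaymath}
This depends on $\zeta$ only through $\cos\zeta$, so it is even in $\zeta$, and so is $|\mathcal G_\mu^{(0)}|$. Regard $F_c$ as a Möbius-type function $(\alpha+\beta t)/(\gamma-\delta t)$ of $t=\cos\zeta\in[-1,1]$, with $\alpha=(1-\nu)^2+c^2$, $\beta=2c(1-\nu)$, $\gamma=1+c^2$ and $\delta=2c$. All four coefficients are nonnegative, and $\gamma-\delta t\ge(1-c)^2>0$. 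Its derivative in $t$ is therefore $(\beta\gamma+\alpha\delta)/(\gamma-\delta t)^2\ge0$, so $F_c$ is nondecreasing in $\cos\zeta$. Since $\cos\zeta$ decreases in $|\zeta|$ on $[0,\pi]$, each $F_c$ is nonincreasing in $|\zeta|$ there. The product of two nonnegative nonincreasing functions is nonincreasing, which gives the monotonicity claim for $|\mathcal G_\mu^{(0)}|$.

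Finally, I evaluate at $\zeta=0$ to get the bound \eqref{eq:fim3app-G-max-zero}. Using $a+b=\nu$, we have $1-\nu+a=1-b$ and $1-\nu+b=1-a$. Hence
\begin{displaymath}
  \mathcal G_\mu^{(0)}(0)=\frac{(1-b)(1-a)}{(1-a)(1-b)}=1.
\end{displaymath}
Combined with the monotonicity, this gives $|\mathcal G_\mu^{(0)}(k)|\le1$ for $|\zeta|\le\pi$, with equality at $k=0$.

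The only real obstacle is checking the sign conditions $a,b\in[0,1)$, which is where $C_{M+1}\ge\sqrt3$ and $0<\nu<1$ are used. Everything else is a direct computation.
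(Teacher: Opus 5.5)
Your proof is correct and follows essentially the same route as the paper. The paper also writes $|\mathcal G_\mu^{(0)}(k)|^2$ as a product $F_{1-r}(x)F_{1+r}(x)$ of two M\"obius-type functions of $x=\cos\zeta$ (your $F_c$ with $c=\nu\alpha/2$), shows each factor is nondecreasing in $x$ by differentiation, and checks $\mathcal G_\mu^{(0)}(0)=1$ by direct substitution. Your differences are cosmetic: you use a sign argument on $\beta\gamma+\alpha\delta$ where the paper writes out $\nu\alpha(2-\nu)(1-\nu+\nu^2\alpha^2/4)$, you treat the degenerate case $c=0$ together with the others rather than separately, and you use $a+b=\nu$ to make the $\zeta=0$ evaluation transparent.
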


\begin{proposition} \label{prop:fim3app-G0-largest}
Under \eqref{eq:fim3app-parameter-range}, when $|k\Delta x|\leq 2\pi/5$,
\begin{equation}
  |\mathcal G_0^{(0)}(k)|
  \geq|\mathcal G_{\pm\sqrt3}^{(0)}(k)|.
  \label{eq:fim3app-G0-largest-nonstrict}
\end{equation}
More precisely, the inequality is strict for $0<|k\Delta x|\leq2\pi/5$, whereas all three moduli are equal to one at $k=0$.
\end{proposition}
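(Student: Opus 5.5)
The plan is to reduce the comparison to a one-parameter inequality. With $\zeta=k\Delta x$ and $C=C_{M+1}$, set
\[
a=\tfrac{\nu}{2}\bigl(1-\tfrac{\mu}{C}\bigr),\qquad b=\tfrac{\nu}{2}\bigl(1+\tfrac{\mu}{C}\bigr).
\]
Then $a+b=\nu$ for every $\mu$. Factoring \eqref{eq:fim3analysis-G}, the modulus splits into one factor in $a$ and one in $b$:
\[
|\mathcal G_\mu^{(0)}(k)|^2=\frac{N(a)N(b)}{D(a)D(b)},
\]
where, with $c=\cos\zeta$,
\[
N(x)=(1-\nu)^2+2(1-\nu)xc+x^2,\qquad D(x)=1-2xc+x^2 .
\]
Here $D(x)>0$ because $0\le x<1$ (this uses $C\ge\sqrt3$ from \eqref{eq:fim3app-parameter-range}).

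For the three modes:
\begin{itemize}
\item The case $\mu=0$ gives $a=b=m:=\nu/2$.
\item The cases $\mu=\pm\sqrt3$ give $\{a,b\}=\{m-\delta,\,m+\delta\}$ with $\delta=\nu\sqrt3/(2C)\in(0,m]$.
\end{itemize}
The two signs give the same unordered pair, so $|\mathcal G_{\sqrt3}^{(0)}|=|\mathcal G_{-\sqrt3}^{(0)}|$ and only one inequality needs proving.

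Both numerator and denominator are symmetric in $(a,b)$ with $a+b=\nu$ fixed. I will therefore write them as polynomials in the single variable $p=ab=m^2-\delta^2\in[0,m^2)$. Expanding gives two quadratics in $p$ with coefficients polynomial in $(\nu,c)$:
\[
\mathcal N(p)=(1-\nu)^4+2(1-\nu)^3c\nu+(1-\nu)^2\nu^2+p\bigl[4(1-\nu)^2c^2-2(1-\nu)^2+2(1-\nu)c\nu\bigr]+p^2,
\]
and a corresponding $\mathcal D(p)$ from $D(a)D(b)$.

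The claim \eqref{eq:fim3app-G0-largest-nonstrict} is then $\mathcal N(p)\mathcal D(m^2)\le \mathcal N(m^2)\mathcal D(p)$ for $p\in[0,m^2)$. The difference $\Delta(p)=\mathcal N(p)\mathcal D(m^2)-\mathcal N(m^2)\mathcal D(p)$ is a quadratic in $p$ that vanishes at $p=m^2$. Hence $\Delta(p)=(p-m^2)L(p)$ with $L$ affine in $p$, and it suffices to show $L(p)>0$ on $[0,m^2]$ when $0<|\zeta|\le2\pi/5$. Since $L$ is affine, it is enough to check the two endpoints $p=0$ and $p=m^2$.

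Checking the signs of $L(0)$ and $L(m^2)$ is the main obstacle. These are explicit polynomials in $\nu\in(0,1)$ and $c\in[\cos(2\pi/5),1)$. I expect each to factor as $(1-c)$ times a polynomial that is positive exactly when $c$ exceeds a threshold. The value $\cos(2\pi/5)=(\sqrt5-1)/4$, a root of $4c^2+2c-1$, suggests such a factor appears; the coefficient $4c^2$ in the linear term of $\mathcal N$ is a hint.

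My first attempt will be to substitute $c=1-t$ and organize the resulting polynomial by powers of $\nu$. I then want to show that every $\nu$-coefficient is nonnegative for $t\in(0,1-\cos(2\pi/5)]$, with at least one strictly positive. If that fails, I will fall back on minimizing over $\nu$ for fixed $c$ and using monotonicity in $c$.

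The case $\zeta=0$ is immediate. Since $1-\nu+a=1-b$, the product of the numerator factors equals the product of the denominator factors, so all three moduli equal $1$. For $0<|\zeta|\le 2\pi/5$ we have $c<1$, so the factor $(1-c)$ gives strict positivity of $L$ and hence the strict inequality. Evenness in $\zeta$ is clear because only $c$ enters.
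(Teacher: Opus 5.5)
The gap is the sign of $L(0)$, which is the entire content of the proposition. You never establish it, and the method you propose for it fails as stated.

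The reduction itself is sound. It is the paper's computation in other coordinates. With $t=1-c$ and $q=3/C_{M+1}^2=1-p/m^2$, one has $L(p)=\tfrac{\nu(2-\nu)t}{8}P_q(t)$, where $P_q$ is the quadratic in the paper's difference formula. So your ``$L$ affine in $p$'' is the paper's ``$P_q$ affine in $q$'', and the factor $1-c$ you anticipate is indeed there. Because $\partial_q P_q=-2\nu^4t-\nu^2(2-\nu)^2<0$, $L$ is increasing in $p$, so only your endpoint $p=0$ (i.e.\ $q=1$) matters. Your handling of $\zeta=0$, of evenness, and of $|\mathcal G_{\sqrt3}^{(0)}|=|\mathcal G_{-\sqrt3}^{(0)}|$ is fine.

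Why the method fails: written in terms of $c$,
\[
P_1=16(4c^2+2c-1)(1-2\nu)+(64c^2+64c-12)\nu^2-(32c+4)\nu^3+4\nu^4 .
\]
For $c\in[\cos(2\pi/5),1)$ the coefficient of $\nu^3$ is negative and that of $\nu$ is nonpositive. The extra factor $\nu(2-\nu)$ in $L$ does not cure this. Nor does $4c^2+2c-1$ split off as a factor. It is the $\nu\to0$ limit of $P_1/16$, and at $c=\cos(2\pi/5)$ one has $P_1=4\nu^2[\nu^2+(1-2\sqrt5)\nu+2\sqrt5-1]$. So the inequality degenerates as $\nu\to0$, $|\zeta|\to2\pi/5$, and any estimate must be sharp there. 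Your fallback is too unspecific to show it handles this.

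The paper fixes $\nu$ and uses convexity of $P_1$ in $t$ together with $P_1'(t_\ast)=32(\nu-1)(\nu^2-\sqrt5\nu+\sqrt5)<0$. This places the minimum on $[0,t_\ast]$ at $t_\ast$. It then checks that the bracket above has negative discriminant $25-12\sqrt5$.

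Alternatively, your coefficient idea does work if you expand in the basis $(1-\nu)^j\nu^k$ instead of $\nu^k$:
\[
P_1=16(4c^2+2c-1)(1-\nu)^2+4(8c+1)\nu^2(1-\nu)+4\nu^4 .
\]
Each term is nonnegative for $c\geq\cos(2\pi/5)$, which is the larger root of $4c^2+2c-1$. The last term is strictly positive for $\nu>0$, which gives the strict inequality at once. Your second endpoint is then automatic, since $P_0=P_1+\nu^2(2-\nu)^2+2\nu^4(1-c)$.
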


\medskip
\begin{proof}[Proof of Proposition 1.]
Set
\begin{equation}
 r=\frac{\mu}{C_{M+1}},
 \qquad
 x=\cos \zeta,
 \qquad
 F_\alpha(x)
 =\frac{
 (1-\nu)^2+\dfrac{\nu^2\alpha^2}{4}
 +\nu(1-\nu)\alpha x}
 {1+\dfrac{\nu^2\alpha^2}{4}-\nu\alpha x}.
 \label{eq:fim3app-Falpha}
\end{equation}
Since $|r|\leq1$ under \eqref{eq:fim3app-parameter-range}, both
$1-r$ and $1+r$ belong to $[0,2]$.  Taking the modulus of
\eqref{eq:fim3analysis-G} gives
\begin{equation}
 |\mathcal G_\mu^{(0)}(k)|^2
 =F_{1-r}(x)F_{1+r}(x).
 \label{eq:fim3app-G-modulus-F}
\end{equation}
For $\alpha>0$,
\begin{equation}
 \frac{dF_\alpha}{dx}
 =\frac{
 \nu\alpha(2-\nu)
 \left(1-\nu+\dfrac{\nu^2\alpha^2}{4}\right)}
 {\left(1+\dfrac{\nu^2\alpha^2}{4}-\nu\alpha x\right)^2}>0,
 \label{eq:fim3app-F-monotone}
\end{equation}
whereas $F_0(x)=(1-\nu)^2$ is independent of $x$.  Therefore the right-hand
side of \eqref{eq:fim3app-G-modulus-F} is nondecreasing in
$x=\cos\zeta$.  Since $\cos\zeta$ is even in $\zeta$ and decreases as
$|\zeta|$ increases from $0$ to $\pi$, it follows that
$|\mathcal G_\mu^{(0)}(k)|$ is even in $\zeta$ and nonincreasing in
$|\zeta|$.  Finally, direct substitution into \eqref{eq:fim3analysis-G}
gives
\begin{equation}
 \mathcal G_\mu^{(0)}(0)=1,
\end{equation}
which proves \eqref{eq:fim3app-G-max-zero}.
\end{proof}

\begin{proof}[Proof of Proposition 2.]
Replacing $\mu$ by $-\mu$ in \eqref{eq:fim3analysis-G} gives the complex
conjugate after exchanging the two factors.  Hence
\begin{equation}
 |\mathcal G_{-\sqrt3}^{(0)}(k)|
 =|\mathcal G_{\sqrt3}^{(0)}(k)|.
 \label{eq:fim3app-nonzero-equal}
\end{equation}
It remains to compare the branches $\mu=0$ and $\mu=\sqrt3$. Set $q=3/C_{M+1}^2$ and $t=1-\cos k\Delta x$. Then $0<q\leq1$ and $0\leq t\leq2$. For $0\leq\alpha\leq 2$, define
\begin{equation}
 D_\alpha(t)
 =1+\frac{\nu^2\alpha^2}{4}-\nu\alpha(1-t)>0.
 \label{eq:fim3app-Dalpha}
\end{equation}
A direct subtraction of the two squared moduli gives
\begin{equation}
 |\mathcal G_0^{(0)}(k)|^2
 -|\mathcal G_{\sqrt3}^{(0)}(k)|^2
 =\frac{\nu^3q(2-\nu)t}
 {32[D_1(t)]^2 D_{1-\sqrt{q}}(t) D_{1+\sqrt{q}}(t)}P_q(t),
 \label{eq:fim3app-G-difference}
\end{equation}
where
\begin{displaymath}
  P_q(t)=64(1-\nu)^2t^2
  +2\left[(2-\nu)^2(\nu^2+20\nu-20)-\nu^4q\right]t
  +(2-\nu)^2\left[5(2-\nu)^2-\nu^2q\right].
\end{displaymath}
Moreover,
\begin{equation}
 \frac{\partial P_q(t)}{\partial q}
 =-2\nu^4t-\nu^2(2-\nu)^2<0,
 \label{eq:fim3app-Pq-q}
\end{equation}
so $q\leq1$ implies $P_q(t)\geq P_1(t)$. Let
\begin{equation}
 t_\ast=1-\cos\frac{2\pi}{5}=\frac{5-\sqrt5}{4}.
 \label{eq:fim3app-tstar}
\end{equation}
For $0<\nu<1$, since
\begin{equation}
  P_1''(t)=128(1-\nu)^2\geq0, \qquad
  P_1'(t_\ast)
  =32(\nu-1)(\nu^2-\sqrt5\,\nu+\sqrt5)<0,
\end{equation}
we conclude that $P_1'(t)<0$ for $0\leq t\leq t_\ast$. Thus $P_1$ is decreasing on $[0,t_\ast]$ and
\begin{displaymath}
  P_q(t) \geq P_1(t)\geq P_1(t_\ast)=4\nu^2 \left[\nu^2+(1-2\sqrt5)\nu+2\sqrt5-1\right]>0, \qquad \forall t \in [0,t_\ast].
\end{displaymath}
Therefore, when $0<|k\Delta x|\leq2\pi/5$, every factor on the right-hand side of \eqref{eq:fim3app-G-difference} is positive, which proves \eqref{eq:fim3app-G0-largest-nonstrict}.
\end{proof}

\subsection{First-order correction for $\gamma_1=1$}
\label{subsec:fim3app-first-order}

The first-order correction to the amplification factor can be derived in a manner similar to the leading-order analysis. Here we first claim that along an eigenvalue branch with a nonzero limit, the error $e_{4,j}^{n,r}$ has order $O(\varepsilon^2)$ when $M \geq 4$. Taking the fifth component of the equation \eqref{eq:fim3app-kin-forward-error}, we obtain
\begin{displaymath}
  \left(1+\frac{\omega}{\varepsilon}\right)e_{4,j}^{n,s+1/2}
  = (1-\nu)e_{4,j}^{n,s} + \frac{\nu}{2}e_{4,j-1}^{n,s+1/2} + \frac{\nu}{2}e_{4,j+1}^{n,s}
  + \frac{\nu}{2C_{M+1}}(e_{3,j-1}^{n,s+1/2}-e_{3,j+1}^{n,s})
  + \frac{5\nu}{2C_{M+1}}(e_{5,j-1}^{n,s+1/2}-e_{5,j+1}^{n,s}),
\end{displaymath}
where the terms involving $e_{5,j\pm 1}$ are omitted when $M=4$ and the right-hand side is $O(\varepsilon)$ due to \eqref{eq:fim3app-leading-HKE-expansion}. Since $1 + \omega/\varepsilon = O(\varepsilon^{-1})$, it follows that $e_{4,j}^{n,s+1/2} = O(\varepsilon^2)$. Similarly, the fifth component of \eqref{eq:fim3app-kin-backward-error} gives $e_{4,j}^{n,s+1}=O(\varepsilon^2)$. Since $e_{4,j}^{n,\gamma_1} = e_{4,j}^{n+1,0}$, below we will also assume $e_{4,j}^{n,0}$ to be $O(\varepsilon^2)$. This conclusion enables us to derive the first-order correction to the amplification factor by considering only the first four components of the error vector $\delta \boldsymbol u_j^{(n)}$, as other components are not coupled to these four components up to the first order in $\varepsilon$.

For simplicity, we set $\gamma_1=1$ in this subsection. For a fixed wave number $k$, we write
\begin{equation}
 \boldsymbol H_j^{n,r}
 =\widehat{\boldsymbol H}^{n,r}(\varepsilon,k)e^{\mathrm i j k\Delta x},
 \qquad
 e_{3,j}^{n,r}=\widehat e_3^{n,r}(\varepsilon,k)e^{\mathrm i j k\Delta x},
 \qquad
 r\in\left\{0,\frac12,1\right\}.
 \label{eq:fim3app-first-order-fourier}
\end{equation}
Applying the results of the leading-order analysis, we know that along the leading-order eigenvalue branch $\mu$, the first three components of an eigenvector of $\mathbf{T}(\varepsilon,k)$, together with the intermediate amplitudes obtained during the moment sweeps, can be expanded as
\begin{align}
  \widehat{\boldsymbol H}^{n,0}(\varepsilon,k) &= a \boldsymbol v_\mu + \varepsilon \widehat{\boldsymbol H}^{n,0,(1)}(k) + O(\varepsilon^2), \label{eq:fim3app-first-order-expansion-1}\\
  \widehat{\boldsymbol H}^{n,1/2}(\varepsilon,k) &= a \phi_\mu^{(0)}(k) \boldsymbol v_\mu + \varepsilon \widehat{\boldsymbol H}^{n,1/2,(1)}(k) + O(\varepsilon^2), \label{eq:fim3app-first-order-expansion-2}\\
  \widehat{\boldsymbol H}^{n,1}(\varepsilon,k) &= a \mathcal G_\mu^{(0)}(k) \boldsymbol v_\mu + \varepsilon \widehat{\boldsymbol H}^{n,1,(1)}(k) + O(\varepsilon^2), \label{eq:fim3app-first-order-expansion-3}
\end{align}
and for the fourth component, we let $\widehat e_3^{n,r}(\varepsilon,k) = a \varepsilon \xi_r(k) + O(\varepsilon^2)$ for $r \in \{0, 1/2, 1\}$. Since the leading-order eigenvalue of $\mathbf{T}(\varepsilon,k)$ is $\Lambda_\mu^{(0)}(k) = [\mathcal G_\mu^{(0)}(k)]^{\gamma_2 + 1}$, and the hydrodynamic stage does not change $e_{3,j}^{n,r}$, we have
\begin{equation}
  \xi_1(k) = [\mathcal G_\mu^{(0)}(k)]^{\gamma_2+1} \xi_0(k).
  \label{eq:fim3app-xi1-xi0}
\end{equation}

Without loss of generality, we may assume the first-order term in \eqref{eq:fim3app-first-order-expansion-1} has no component along $\boldsymbol v_\mu$:
\begin{equation}
 \widehat{\boldsymbol H}^{n,0,(1)}(k)
 =\sum_{\mu'\in\{-\sqrt3,0,\sqrt3\}\setminus\{\mu\}}
 c_{\mu'}^{(1)}(k)\boldsymbol v_{\mu'}.
 \label{eq:fim3app-first-order-normalization}
\end{equation}
Substituting \eqref{eq:fim3app-first-order-expansion-1}, \eqref{eq:fim3app-first-order-expansion-2} and \eqref{eq:fim3app-first-order-normalization} into the first four equations of \eqref{eq:fim3app-kin-forward-error} and collecting the $O(\varepsilon)$ terms in the first three equations and the $O(1)$ terms in the fourth, we obtain
\begin{align}
 &\left[\mathbf I-\frac{\nu}{2}
 \left(\mathbf I+\frac{\mathbf A_h}{C_{M+1}}\right)e^{-\mathrm i k\Delta x}\right]
 \widehat{\boldsymbol H}^{n,1/2,(1)}(k)
 \notag\\
 &\qquad=\sum_{\mu'\in\{-\sqrt3,0,\sqrt3\}\setminus\{\mu\}}
 \left[1-\nu+\frac{\nu}{2}\left(1-\frac{\mu'}{C_{M+1}}\right)e^{\mathrm i k\Delta x}\right]
 c_{\mu'}^{(1)}(k)\boldsymbol v_{\mu'}
 +\frac{a\nu}{2C_{M+1}}\boldsymbol b
 \left[\xi_{1/2}(k)e^{-\mathrm i k\Delta x}-\xi_0(k)e^{\mathrm i k\Delta x}\right],
 \label{eq:fim3app-forward-first-order-coefficients}\\
 &\omega\xi_{1/2}(k)
 =\frac{\nu}{2C_{M+1}}
 \left[\phi_\mu^{(0)}(k)e^{-\mathrm i k\Delta x}-e^{\mathrm i k\Delta x}\right].
 \label{eq:fim3app-forward-leading-moment-coefficient}
\end{align}
By choosing the eigenvectors
\begin{equation}
  \boldsymbol v_{-\sqrt3}=\begin{pmatrix}1\\-\sqrt3\\1\end{pmatrix},
  \qquad
  \boldsymbol v_0=\begin{pmatrix}-2\\0\\1\end{pmatrix},
  \qquad
  \boldsymbol v_{\sqrt3}=\begin{pmatrix}1\\\sqrt3\\1\end{pmatrix},
  \label{eq:fim3app-hydro-eigenvectors}
\end{equation}
we have $\boldsymbol{b} = \boldsymbol{v}_{-\sqrt3} + \boldsymbol{v}_0 + \boldsymbol{v}_{\sqrt3}$, and $\widehat{\boldsymbol{H}}^{n,1/2,(1)}(k)$ and $\xi_{1/2}(k)$ can be solved from the equations \eqref{eq:fim3app-forward-first-order-coefficients} and \eqref{eq:fim3app-forward-leading-moment-coefficient} as
\begin{equation}
  \widehat{\boldsymbol H}^{n,1/2,(1)}(k)
  =\alpha_\mu^{(1)}(k) a \boldsymbol v_\mu
  +\sum_{\mu'\ne\mu}\alpha_{\mu'}^{(1)}(k)\boldsymbol v_{\mu'}, \qquad
  \xi_{1/2}(k) = \frac{1}{2\Delta x}
  \left[\phi_\mu^{(0)}(k)e^{-\mathrm i k\Delta x}-e^{\mathrm i k\Delta x}\right],
  \label{eq:fim3app-xi-half}
\end{equation}
where
\begin{equation}
 \alpha_\mu^{(1)}(k)
 =\frac{\nu(e^{-\mathrm i k\Delta x}\xi_{1/2}(k)
 -e^{\mathrm i k\Delta x}\xi_0(k))}
 {2C_{M+1}\left[1-\dfrac{\nu}{2}
 \left(1+\dfrac{\mu}{C_{M+1}}\right)e^{-\mathrm i k\Delta x}\right]},
 \label{eq:fim3app-alpha}
\end{equation}
and $\alpha_{\mu'}^{(1)}(k)$ for $\mu'\ne\mu$ are not needed in the subsequent calculation. Applying a similar procedure to the backward sweep \eqref{eq:fim3app-kin-backward-error}, we obtain $\widehat{\boldsymbol H}^{n,1,(1)}(k)$ and $\xi_1(k)$ as follows:
\begin{equation}
  \widehat{\boldsymbol H}^{n,1,(1)}(k)
  =\beta_\mu^{(1)}(k) a \boldsymbol v_\mu
  +\sum_{\mu'\ne\mu}\beta_{\mu'}^{(1)}(k)\boldsymbol v_{\mu'}, \qquad
  \xi_1(k) = \frac{1}{2\Delta x}
  \left[\phi_\mu^{(0)}(k)e^{-\mathrm i k\Delta x}-\mathcal G_\mu^{(0)}(k)e^{\mathrm i k\Delta x}\right],
  \label{eq:fim3app-xi1}
\end{equation}
where
\begin{equation}
  \beta_\mu^{(1)}(k)
  =\frac{\psi_\mu^{(0)}(k) \nu(e^{-\mathrm i k\Delta x}\xi_{1/2}(k)
  -e^{\mathrm i k\Delta x}\xi_0(k))}
  {2C_{M+1}\left[1-\dfrac{\nu}{2}
  \left(1+\dfrac{\mu}{C_{M+1}}\right)e^{-\mathrm i k\Delta x}\right]}
  +\frac{\nu(e^{-\mathrm i k\Delta x}\xi_{1/2}(k)
  -e^{\mathrm i k\Delta x}\xi_1(k))}
  {2C_{M+1}\left[1-\dfrac{\nu}{2}
  \left(1-\dfrac{\mu}{C_{M+1}}\right)e^{\mathrm i k\Delta x}\right]}.
\end{equation}
and $\beta_{\mu'}^{(1)}(k)$ for $\mu'\ne\mu$ are insignificant in the present calculation. Here we recall that the definitions of $\xi_0(k)$, $\xi_{1/2}(k)$ and $\xi_1(k)$ are given in \eqref{eq:fim3app-xi1-xi0}, \eqref{eq:fim3app-xi-half} and \eqref{eq:fim3app-xi1}, respectively.

We now consider the hydrodynamic stage, which changes only the first three components. Assume
\begin{displaymath}
  \boldsymbol{E}_j^{n,\ell} = \widehat{\boldsymbol{E}}^{n,\ell}(\varepsilon,k) e^{\mathrm{i} j k \Delta x}, \quad
  \widehat{\boldsymbol{E}}^{n,\ell}(\varepsilon,k) = a [\mathcal{G}_{\mu}^{(0)}(k)]^{\ell+1} \boldsymbol{v}_{\mu} + \varepsilon \widehat{\boldsymbol{E}}^{n,\ell,(1)}(k) + O(\varepsilon^2), \qquad \ell = 0, 1, \ldots, \gamma_2.
\end{displaymath}
Let
\begin{displaymath}
  \widehat{\boldsymbol{E}}^{n,\ell,(1)}(k) = \gamma_{\mu}^{\ell, (1)}(k) a \boldsymbol{v}_{\mu} + \sum_{\mu' \ne \mu} \gamma_{\mu'}^{\ell, (1)}(k) \boldsymbol{v}_{\mu'}, \qquad
  \ell = 0, 1, \ldots, \gamma_2.
\end{displaymath}
The initialization of the hydrodynamic stage gives $\gamma_{\mu}^{0, (1)}(k) = \beta_\mu^{(1)}(k)$, and the error evolution equations \eqref{eq:fim3app-hydro-forward-error}--\eqref{eq:fim3app-hydro-backward-error} yield the recurrence relation:
\begin{displaymath}
  \gamma_{\mu}^{\ell+1, (1)}(k) = \mathcal{G}_{\mu}^{(0)}(k) \gamma_{\mu}^{\ell, (1)}(k) + J_{\mu}(k) \xi_1(k), \qquad
  \ell = 0, 1, \ldots, \gamma_2 - 1,
\end{displaymath}
with
\begin{equation}
  J_\mu(k)
  =-\mathrm i\frac{\nu}{C_{M+1}}\sin k\Delta x
  \left[
  \frac{\psi_\mu^{(0)}(k)}
  {1-\dfrac{\nu}{2}\left(1+\dfrac{\mu}{C_{M+1}}\right)e^{-\mathrm i k \Delta x}}
  +\frac{1}
  {1-\dfrac{\nu}{2}\left(1-\dfrac{\mu}{C_{M+1}}\right)e^{\mathrm i k \Delta x}}
  \right].
  \label{eq:fim3app-J}
\end{equation}
The coefficient $\gamma_{\mu}^{\gamma_2, (1)}(k)$ is the first-order correction to the leading-order amplification factor $\Lambda_\mu^{(0)}(k)$ (see \eqref{eq:fim3analysis-eps-expansion}). Solving this recurrence yields the following expression for $\Lambda_\mu^{(1)}(k)$:
\begin{equation}
  \Lambda_\mu^{(1)}(k) = \gamma_{\mu}^{\gamma_2, (1)}(k) = [\mathcal{G}_{\mu}^{(0)}(k)]^{\gamma_2} \beta_\mu^{(1)}(k) + \frac{1 - [\mathcal{G}_{\mu}^{(0)}(k)]^{\gamma_2}}{1 - \mathcal{G}_{\mu}^{(0)}(k)} J_\mu(k) \xi_1(k).
  \label{eq:fim3app-lambda1-general}
\end{equation}

For the branch $\mu=0$, the Taylor expansion with respect to $k\Delta x$ is tedious but straightforward. The detailed derivation of the final result \eqref{eq:fim3analysis-lambda1-low} is omitted here. We only claim that the requirement $\gamma_2 (k\Delta x)^2 \ll 1$, stated above \eqref{eq:fim3analysis-lambda1-low}, is needed in the following equation:
\begin{equation}
 \frac{1-[\mathcal G_0^{(0)}(k)]^{\gamma_2}}
 {1-\mathcal G_0^{(0)}(k)}
 =\gamma_2+O((k\Delta x)^2),
 \label{eq:fim3app-geometric-low}
\end{equation}
which expands the coefficient in \eqref{eq:fim3app-lambda1-general}.

\bibliographystyle{elsarticle-num} 
\bibliography{reference}

\end{document}